\newtheorem{thm}{Theorem}[section]
\newtheorem{cor}[thm]{Corollary}
\newtheorem{lem}[thm]{Lemma}
\newcommand{\Q}{\mathbb{Q}}
\renewcommand{\P}{\mathbb{P}}
\newcommand{\E}{\mathbb{E}}
\newcommand{\R}{\mathbb{R}}
\newcommand{\F}{\mathcal{F}}
\newcommand{\ind}{\mathbbm{1}}
\newcommand{\bp}{\begin{proof}}
\newcommand{\ep}{\end{proof}}
\def\bal#1\eal{\begin{align*}#1\end{align*}}
\newcommand{\Nc}{\mathcal{N}}
\renewcommand{\d}{{\rm{d}}}
\newcommand{\tpi}{\tilde{\pi}}
\newcommand{\cN}{\mathcal{N}}
\newcommand{\bga}{ \boldsymbol\gamma }
\numberwithin{equation}{section}
\author{Samuel G.~G.~Johnston}
\affil{\small{School of Mathematics and Statistics, University College Dublin, Belfield, Dublin 4}}
\title{The genealogy of Galton-Watson trees}
\begin{document}
\maketitle

\begin{abstract}
Take a continuous-time Galton-Watson tree and pick $k$ distinct particles uniformly from those alive at a time $T$. What does their genealogical tree look like? The case $k=2$ has been studied by several authors, and the near-critical asymptotics for general $k$ appear in Harris, Johnston and Roberts (2018) \cite{HJR17}. Here we give the full picture.
\end{abstract}

\section{Introduction}
Let $L$ be a random variable taking values in $\{0,1,2,\ldots\}$. Consider a continuous-time Galton-Watson tree starting with one initial particle, branching at rate $1$, and with offspring distributed like $L$.
Let $N_t$ be the number of particles alive at time $t$, and write $f(s) := \E[ s^L]$ and $F_t(s) := \E[ s^{N_t} ]$ for the generating functions associated with the process.\\

Let $T > 0$, and on the event $\{N_T \geq k\}$ pick $k$ distinct particles $U_1,\ldots,U_k$ uniformly from those alive at time $T$. For each earlier time $t \in [0,T]$, define the equivalence relation $\sim_t$ on $\{1,\ldots,k\}$ by
\begin{align*}
i \sim_{t} j ~\iff ~\text{$U_i$ and $U_j$ share a common ancestor alive at time $t$}.
\end{align*}
We let $\pi^{k,L,T}_t$ denote the random partition of $\{1,\ldots,k\}$ corresponding to this equivalence relation.
The process $(\pi^{k,L,T}_t)_{t \in [0,T]}$, defined on the event $\{ N_T \geq k \}$, is a right-continuous partition-valued stochastic process characterising the entire genealogical tree of $U_1,\ldots,U_k$. \\

Our goal is to describe the law of $(\pi^{k,L,T}_t)_{t \in [0,T]}$ conditioned on the event $\{N_T \geq k \}$, with a view towards the asymptotic regime $T \to \infty$. We find that as $T \to \infty$, there are marked differences in the qualitative behaviour of $(\pi^{k,L,T}_t)_{t \in [0,T]}$ depending on the mean number of offspring
\begin{align*}
m := f'(1).
\end{align*}
Before we state our results in full generality in Section 3, we give an impression of the structure we expect to encounter by exploring the special case $k=2$, which features as the focus of a chapter in the recent book \cite{Ath16}, and on which the majority of the related literature concentrates.

\section{The case $k=2$}
The case $k=2$ amounts to choosing two particles uniformly from those alive at a time $T$ from a tree with offspring distributed like $L$, and studying the time $\tau^{L,T}$ in $[0,T]$ at which they last shared a common ancestor. In terms of the partition process $(\pi^{2,L,T}_t)_{t \in [0,T]}$, $\tau^{L,T}$ is the time at which the single block $\{\{1,2\}\}$ splits into the pair of singletons $\{ \{1\},\{2\} \}$. \\

The following characterisation of the law of $\tau^{L,T}$ (which we will generalise later) was first given by Lambert \cite{Lam03}.

\begin{lem}[Lambert \cite{Lam03}, Corollary 1] \label{Lambert Lemma} On $\{N_T \geq 2\}$, pick two distinct particles uniformly from those alive at time $T$. Let $\tau^{L,T} \in [0,T]$ be the time at which they last shared a common ancestor. Then
\begin{align} \label{lam_eq}
\P\Big( \tau^{L,T} \in [t,T], N_T \geq 2  \Big) =  \int_0^1 (1-s) \frac{ F''_{T-t}(s) }{ F'_{T-t}(s) } F'_T(s) ds,
\end{align}
where $F_t(s) = \E[s^{N_t}]$.
\end{lem}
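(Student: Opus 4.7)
The plan is to reduce the computation to a standard probabilistic representation by conditioning on $N_t$. On $\{N_t = n\}$, let $X_1, \ldots, X_n$ denote the sizes at time $T$ of the $n$ independent subtrees emanating from the particles alive at time $t$; these are i.i.d.\ copies of $N_{T-t}$ with $N_T = \sum_i X_i$. The number of \emph{ordered} pairs of distinct particles at time $T$ that share a common ancestor at time $t$ is precisely $\sum_i X_i(X_i-1)$, and $\tau^{L,T} \geq t$ is equivalent to the two uniformly chosen particles descending from the same time-$t$ ancestor, so
\begin{align*}
\P\big( \tau^{L,T} \in [t,T],\, N_T \geq 2 \big) \;=\; \E\!\left[ \frac{\sum_{i=1}^{N_t} X_i(X_i-1)}{N_T(N_T-1)} \, \ind_{\{N_T \geq 2\}} \right],
\end{align*}
with the right-hand side understood as $0$ on $\{N_T < 2\}$. (Note that on $\{N_T \geq 2\}$ the MRCA time is a branching time, so $\tau^{L,T} < T$ almost surely, making $\{\tau^{L,T} \in [t,T]\}$ and $\{\tau^{L,T} \geq t\}$ interchangeable.)

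Next I would employ the elementary identity $\frac{1}{m(m-1)} = \int_0^1 (1-s) s^{m-2}\,\d s$, valid for integers $m \geq 2$, to convert the awkward ratio into a tractable integral. After Fubini, and noting that the summand vanishes unless some $X_i \geq 2$ (forcing $N_T \geq 2$), the probability becomes
\begin{align*}
\int_0^1 (1-s)\, \E\!\left[ \sum_{i=1}^{N_t} X_i(X_i-1)\, s^{N_T - 2} \right] \d s.
\end{align*}
Conditioning on $N_t = n$, writing $s^{N_T - 2} = s^{-2}\prod_j s^{X_j}$, and using the independence of the $X_j$, the inner expectation factors as $n \cdot s^{-2} \cdot \E[X_1(X_1-1)s^{X_1}] \cdot F_{T-t}(s)^{n-1} = n F''_{T-t}(s) F_{T-t}(s)^{n-1}$, having used $\E[X_1(X_1-1)s^{X_1}] = s^2 F''_{T-t}(s)$. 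Averaging over $N_t$ gives $F''_{T-t}(s)\, \E[N_t F_{T-t}(s)^{N_t - 1}] = F''_{T-t}(s)\, F'_t(F_{T-t}(s))$.

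Finally I would invoke the semigroup identity $F_T(s) = F_t(F_{T-t}(s))$, whose derivative $F'_T(s) = F'_t(F_{T-t}(s)) \cdot F'_{T-t}(s)$ rearranges to $F'_t(F_{T-t}(s)) = F'_T(s)/F'_{T-t}(s)$, producing exactly the integrand in \eqref{lam_eq}. The key trick is really the integral representation of $1/(m(m-1))$: it converts an annoying ratio into a product whose expectation can be computed by generating functions. Everything else is bookkeeping via the branching property and the chain rule. The one point to handle carefully is the accounting for the event $\{N_T < 2\}$, but as observed this is automatic because the sum $\sum_i X_i(X_i-1)$ already vanishes there.
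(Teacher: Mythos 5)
Your argument is correct: the decomposition over the time-$t$ ancestors, the identity $\frac{1}{m(m-1)}=\int_0^1(1-s)s^{m-2}\,\d s$, the factorisation via the branching property, and the semigroup step $F_t'(F_{T-t}(s))=F_T'(s)/F_{T-t}'(s)$ all go through (Tonelli justifies the interchange since everything is non-negative, and the indicator is indeed absorbed because $\sum_i X_i(X_i-1)$ vanishes unless $N_T\geq 2$; the distinction between $\tau^{L,T}\geq t$ and $\tau^{L,T}>t$ is a null event since $\tau^{L,T}$ has a density). However, it is a genuinely different route from the paper's. The paper does not prove Lemma \ref{Lambert Lemma} directly at all — it is quoted from Lambert — and the result is recovered only as the special case $k=2$, $n=1$, $\gamma_1=\{\{1,2\}\}$ of Theorem \ref{MT FDD}, whose proof runs through multiple spines and the change of measure $\Q^{\alpha,T}$ (Section \ref{SpinesSec}), followed by the same beta-integral inversion that you use, which appears there as Lemma \ref{Betalem}. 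Your approach replaces the spine machinery by direct conditioning on $\mathcal{N}_t$ and the i.i.d.\ subtree structure, which is shorter and entirely elementary for $k=2$; what the spine construction buys is that it handles, in one framework, the full finite-dimensional distributions over an arbitrary mesh and arbitrary partition chains for general $k$, where a bare-hands decomposition over several nested time levels would become combinatorially unwieldy (this is exactly the bookkeeping that the generalised Fa\`a di Bruno identity and Theorem \ref{keytheorem} organise). One small cosmetic point: writing $s^{N_T-2}=s^{-2}\prod_j s^{X_j}$ is awkward at $s=0$; it is cleaner to keep $X_1(X_1-1)s^{X_1-2}$ together so that $\E[X_1(X_1-1)s^{X_1-2}]=F''_{T-t}(s)$ directly.
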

Although Lambert's result gives a powerful implicit characterisation of the distribution of $\tau^{L,T}$,  
it is difficult to infer qualitative properties of this random variable directly from \eqref{lam_eq}.
When $T \to \infty$ however, it is possible to gain a more intuitive insight.
Unsurprisingly, different qualitative behaviours arise depending on whether the underlying Galton-Watson tree is supercritical, critical, or subcritical. 
These cases correspond to $m > 1$, $m=1$, and $m < 1$ respectively (where $m = f'(1)$).\\

In the remainder of this section we will exploit classical limit theory of Galton-Watson trees in conjunction with \eqref{lam_eq} to show that conditioned on $\{N_T \geq 2\}$, we have the following limiting behaviour in $\tau^{L,T}$ as $T \to \infty$:
\begin{itemize}
\item When the tree is supercritical, $\tau^{L,T}$ remains near the beginning of the interval $[0,T]$. That is, we have the convergence in distribution
\begin{align*}
\tau^{L,T} \xrightarrow{D} \bar{\tau}^L
\end{align*}
as $T \to \infty$, where $\bar{\tau}^L$ is a $[0,\infty)$-valued random variable depending on the law of $L$.
\item When the tree is critical, $\tau^{L,T}$ grows linearly in $T$. That is, we have the convergence in distribution
\begin{align*}
\tau^{L,T}/T \xrightarrow{D} \bar{\tau}^{\mathsf{crit}}
\end{align*}
as $T \to \infty$, where $\bar{\tau}^{\mathsf{crit}}$ is a $[0,1]$-valued random variable universal in all critical offspring distributions with finite variance.
\item When the tree is subcritical, $\tau^{L,T}$ remains near the end of the interval $[0,T]$. That is, we have the convergence in distribution
\begin{align*}
T - \tau^{L,T} \xrightarrow{D} \bar{\nu}^L
\end{align*}
as $T \to \infty$, where $\bar{\nu}^L$ is a $[0,\infty)$-valued random variable depending on the law of $L$.
\end{itemize}
In all three cases we are able to obtain integral formulas for the law of the limit variables.

\subsection{The case $k=2$, supercritical}
First we consider the supercritical case $m > 1$. B{\"u}hler \cite{buhler:super} first observed that when two particles are chosen uniformly from a supercritical tree at a large time, their most recent common ancestor was a member of one of the first generations in tree. More recently, Athreya \cite{Ath12a} showed that conditioned on $\{N_T \geq 2\}$, $\tau^{L,T}$ converges in distribution to a $[0,\infty)$-valued random variable $\bar{\tau}^L$ as $T \to \infty$. \\

Without too much concern at this stage for technical details, we now outline how it is possible to use limit theory for supercritical trees in conjunction with Lambert's formula \eqref{lam_eq} to obtain a formula for the law of the limit variable $\bar{\tau}^L$.\\

When the tree is supercritical and the \emph{Kesten-Stigum condition} $\E[L \log_+L ] <\infty$ holds, the non-negative and unit-mean martingale $W_t := N_t e^{ - (m-1)t} $ converges to a well behaved limit $W_\infty$ \cite{kesten stigum}. This suggests that at a large time $T$, the population size is of order $e^{(m-1)T}$, and it would be useful to study the scaling $s = e^{ - ve^{ - (m-1)t}}$ in the generating function $F_t(s)$. Indeed, if we let $\varphi(v) := \E [e^{ - vW_\infty} ]$ denote the Laplace transform of the martingale limit $W_\infty$, in Lemma \ref{tech lemma} we will show that 
\begin{align} \label{sibelius}
\lim_{T \to \infty} e^{ - k(m-1)T} F^{(k)}_{T-t}(e^{ - v e^{ - (m-1)T}})  = (-1)^k e^{ - k(m-1)t} \varphi^{(k)}(v e^{- (m-1)t} ), ~~~ k \geq 0,
\end{align}
where $F^{(k)}_t(s) := \frac{ \partial^k}{ \partial s^k} F_t(s)$. Assuming for now we can take the limit inside the integral, with the change of variable $s = e^{ - ve^{ - (m-1)T}}$ in \eqref{lam_eq}, using \eqref{sibelius} in the final equality below we obtain
\begin{align}
\P(  \bar{\tau}^L > t, \text{survival}  ) & := \lim_{T \to \infty} \P \Big( \tau^{L,T} \in [t,T], N_T \geq 2  \Big) \nonumber \\
&= \lim_{T \to \infty}  \int_0^1 (1-s) \frac{ F''_{T-t}(s) }{ F'_{T-t}(s) } F'_T(s) ds \nonumber \\
&= \int_0^\infty v e^{ - (m-1)t} \frac{ \varphi''(v e^{ - (m-1)t})}{ \varphi'(v e^{ - (m-1)t} ) } \varphi'(v) dv  \label{tau lim}.
\end{align} 
The formula \eqref{tau lim} appears to be new, and corresponds to the special case $k=2$ of our main result for supercritical trees, Theorem \ref{MTSuper}.

\subsection{The case $k=2$, critical}

We now move onto the critical case $m=1$, which has received a lot of attention from different authors \cite{Ath12b,Dur78,HJR17,OCo95, Zubkov}. Under the second moment assumption $f''(1) < \infty$, Zubkov \cite{Zubkov} found that conditioned on $\{N_T \geq 2\}$, $\tau^{L,T}/T $ converges in distribution to a $[0,1]$-valued random variable $\bar{\tau}^{\mathsf{Crit}}$ as $T \to \infty$.\\

Like in the supercritical case, it is possible to use limit theory for critical trees in conjunction with \eqref{lam_eq} to obtain the law of $\bar{\tau}^{\mathsf{Crit}}$. Namely, the Kolmogorov-Yaglom exponential limit law \cite[III.7]{AN72} states that for critical trees with finite variance
\begin{align} \label{kolm}
\lim_{T \to \infty} T \P( N_T > 0 ) = \frac{1}{c}, ~~~ \lim_{T \to \infty} \P \left( \frac{N_T}{cT} >  x \Bigg| N_T > 0 \right) = e^{ -x},
\end{align}
where $c := f''(1)/2$. In Lemma \ref{critical scaling}, we use the exponential limit law \eqref{kolm} to show that for $k \geq 1$ and $a \in (0,1]$
\begin{align} \label{sugar}
\lim_{T \to \infty} T^{ - k + 1 } F_{aT}^{(k)}\left( e^{ - \frac{ \theta}{c T } } \right) = \left( ac \right)^{k-1} \frac{ k! }{ (1 + a \theta)^{k+1}} .
\end{align}
For $u \in [0,1]$, set $t = uT$ and take the change of variable $s = \exp \left( - \frac{ \theta}{c T} \right)$ in \eqref{lam_eq}. Assuming we can take the limit inside the integral, using \eqref{sugar} in the third equality below we obtain
\begin{align}
\P(  \bar{\tau}^{\mathsf{Crit}} \in [u,1] ) &:= \lim_{T \to \infty} \P \left( \tau^{L,T}/T \in [u,1]~ |~ N_T \geq 2 \right) \nonumber
\\ &= \lim_{T \to \infty} \frac{1}{\P(N_T \geq 2)}  \int_0^1 (1-s) \frac{ F''_{(1-u)T}(s) }{ F'_{(1-u)T}(s) } F'_T(s) ds  \nonumber
\\ &= \int_0^\infty \frac{ 2(1-u)}{ (1 + (1-u)\theta) } \frac{ \theta}{ ( 1 + \theta)^2} d\theta  \nonumber
\\ &= \frac{ 2(1-u)}{ u^2} \left( \log \left( \frac{1}{1-u} \right) - u \right). \label{criteq}
\end{align}
Various formulas for the law of $\bar{\tau}^{\mathsf{Crit}}$ have appeared in the literature. Durrett \cite{Dur78} gave \eqref{criteq} in terms of a power series, Athreya \cite{Ath12b} gave an expression in terms of sums of exponential random variables, and O'Connell \cite{OCo95} (and more recently, Harris, Johnston and Roberts \cite{HJR17}) obtained \eqref{criteq} as written in the more general near-critical setting. We refer the reader to \cite[Section 3]{HJR17} for further discussion.

\subsection{The case $k=2$, subcritical}

Finally, we look at the subcritical case $m < 1$. On the overwhelmingly rare event that a subcritical tree manages to survive until a large time $T$, the law of the number of particles alive conditioned on survival converges to a quasi-stationary limit \cite[Section III.7]{AN72}. By this, we mean that there exist non-negative numbers $\{c_j : j \geq 1\}$ satisfying $\sum_{j \geq 1} c_j = 1$ such that
\begin{align} \label{quasi}
\lim_{T \to \infty} \P( N_T = j | N_T > 0 ) = c_j. 
\end{align}
Lambert showed in \cite{Lam03} (and also Athreya in \cite{Ath12b}) that conditioned on $\{N_T \geq 2\}$, the difference $\upsilon^{L,T} := T - \tau^{L,T}$ converges in distribution to a $[0,\infty)$-valued random variable $\bar{\upsilon}^L$ as $T \to \infty$. Lambert also gave an implicit formula for the distribution of the limit variable $\bar{\upsilon}^L$, which Le \cite{Le14} inverted to obtain
\begin{align} \label{MRCASub}
\P( \bar{\upsilon}^L < t  ) = \frac{1}{ 1 - c_1}  \int_0^1  (1-s) \frac{ F_t''(s)}{ F_t'(s) } C'(s) ds,
\end{align}
where $C(s) := \sum_{ j \geq 1} c_j s^j$ is the generating function of the quasi-stationary limit. By replacing $t$ with $T-t$ in \eqref{lam_eq}, and using the fact (due to \eqref{quasi}) that
\begin{align} \label{Subconv2}
\lim_{T \to \infty} \frac{ F_T'(s) }{ \P( N_T \geq 2 ) } = \lim_{T \to \infty} \frac{ \P(N_T \geq 1)}{ \P( N_T \geq 2) } \E[ N_T s^{ N_T - 1 } | N_T \geq 1 ] = \frac{1}{ 1 - c_1} C'(s),
\end{align}
it is straightforward to sketch a proof of \eqref{MRCASub}.


\section{Main results}

\subsection{Overview of results}
Let us now give a brief overview of our main results, which will be stated formally in the sequel. Our results for general $k$ run analagously to Section 2 -- first we provide integral formulas for the law of $(\pi^{k,L,T}_t)_{t \in [0,T]}$ for fixed (finite) $T$, then we study the $T \to \infty$ asymptotics of these integrals in the supercritical, critical, and subcritical cases.\\

The fixed-$T$ results, Theorem \ref{MT FDD}, Theorem \ref{MT Split}, and  Theorem \ref{mixture markov}, characterise the law of $(\pi^{k,L,T}_t)_{t \in [0,T]}$ in three different ways, 
first in terms of its finite dimensional distributions, second in terms of its random splitting times, and third as a mixture of Markov processes. In all cases, explicit formulas are obtained, each in the form of an integral equation involving various generating functions associated with the process.\\

 For instance, the case $n=1$ of Theorem \ref{MT FDD} gives the one-dimensional distributions of $(\pi^{k,L,T}_t)_{t \in [0,T]}$. Namely, for any partiton $\gamma$ of $\{1,\ldots,k\}$,
\[ \P( \pi^{k,L,T}_t = \gamma, N_T \geq k ) = \int_0^1 \frac{(1-s)^{k-1}}{(k-1)!} F_t^{|\gamma|} \left( F_{T-t}(s) \right) \prod_{ \Gamma \in \gamma } F_{T-t}^{|\Gamma|}(s) ds, \]
where $F_t^j$ denotes the $j^{\text{th}}$ derivative of $F_t(s)$ with respect to $s$. \\

We are able to understand the combinatorial nature of the products in these integral formulas by relating them to the \emph{Fa\`a di Bruno formula} \cite{Joh02}, which states that for $k$-times differentiable $f$ and $g$, 
\begin{align} \label{FdB}
 (f \circ g )^k = \sum_{ \gamma \in \Pi^k } \left( f^{|\gamma|} \circ g \right) ~\prod_{ \Gamma \in \gamma } g^{|\Gamma|},
\end{align}
where $\Pi^k$ is the set of partitions of $\{1,\ldots,k\}$, $|\gamma|$ is the number of blocks of a partition $\gamma$ and $|\Gamma|$ are the block sizes, and $h^j$ denotes the $j^{th}$ derivative of $h$.
It transpires that there is a class of Markov processes whose finite dimensional distributions may be given in terms of a generalisation of the Fa\`a di Bruno for the semigroup $(F_t(s))_{t \geq 0}$. Theorem \ref{mixture markov} states that $(\pi^{k,L,T}_t)_{t \in [0,T]}$ is a random mixture of these processes.\\
\vspace{6mm}

We then send the picking time $T \to \infty$, and study the asymptotic behaviour of the process $(\pi^{k,L,T}_t)_{t \in [0,T]}$. As in the case $k=2$ discussed in Section 2, we will see analogous differences in the asymptotic behaviour of $(\pi^{k,L,T}_t)_{t \in [0,T]}$ depending on the mean $m := f'(1)$ of the offspring distribution. Under certain conditions, and in each case conditioned on $\{N_T \geq k\}$, we have the following as $T \to \infty$:
\begin{itemize}
\item In the supercritical case $m > 1$, Theorem \ref{MTSuper} states that we have the distributional convergence
\begin{align}
(\pi_t^{k,L,T})_{t \in [0,T]} \to ( \bar{\pi}^{k,L}_{t})_{t \in [0,\infty)},
\end{align}
for a limit process $ ( \bar{\pi}^{k,L}_{t})_{t \in [0,\infty)}$ depending on the law of $L$. We will characterise the law of the limit process $(\bar{\pi}^{k,L}_t)_{t \in [0, \infty)}$ in terms of integral formulas involving the Laplace transform of the martingale limit.
\item In the critical case $m = 1$, Theorem \ref{MTCrit} states that there exists a universal stochastic process $(\bar{\pi}^{k,\mathsf{crit}}_t)_{t \in [0,1]}$ such that
\begin{align*}
(\pi^{k,L,T}_{tT})_{t \in [0,1]}  \to (\bar{\pi}^{k,\mathsf{crit}}_t)_{t \in [0,1]},
\end{align*}
for every critical offspring distribution with finite variance. This result is not new, and was covered in detail (and in greater generality) in Harris, Johnston and Roberts \cite{HJR17}.
\item In the subcritical case $m < 1$, conditioned on survival until a large time $T$, the common ancestors of a sample of $k$ particles chosen at $T$ existed near the end of the time interval $[0,T]$. Here it makes more sense to consider $(\rho^{k,L,T}_t)_{t \in [0,T]}$ -- the right-continuous modification of $(\pi^{k,L,T}_{T-t})_{t \in [0,T]}$. Our subcritical result, Theorem \ref{MTSub}, states that 
\begin{align}
(\rho^{k,L,T}_{t})_{ t \in [0,T] } \to (\bar{\rho}^{k,L}_{t})_{ t \in [0, \infty)},
\end{align}
for a limit process $(\bar{\rho}^{k,L}_{t})_{t \in [0,\infty)}$ depending on the law of $L$. 
We will characterise the law of the limit process $(\bar{\rho}^{k,L}_{t})_{t \in [0,\infty)}$ in terms of integral formula involving the generating function of the quasi-stationary limit.
\end{itemize}
\vspace{7mm}

Finally, we consider the relationships between the processes $(\pi^{k,L,T}_t)_{t \in [0,T]}$ for different values of $k$. Theorem \ref{MT projection} states that conditioned on the event $\{N_T \geq k + j\}$, the process obtained by projecting $(\pi^{k+j,L,T})_{t \in [0,T]}$ onto $\{1,\ldots,k\}$ has the same law as $(\pi^{k,L,T}_t)_{t \in [0,T]}$. Corollary \ref{projectivity} states that the limiting processes $(\bar{\pi}^{k,L}_t)_{t \geq 0 }$ and $(\bar{\pi}^{k,\mathsf{crit}}_t)_{t \in [0,1]}$ appearing in Theorem \ref{MTSuper} and Theorem \ref{MTCrit} also satisfy a projectivity property.
\subsection{Definitions}\label{sec:definitions}
Before stating the main results in full, we need to introduce some more notation and definitions.
We start by giving a brief formal description of the continuous time Galton-Watson tree. Let $L$ be a $\{0,1,2,\ldots\}$-valued random variable and let $f(s) := \E[s^L]$ be its generating function. 
Under the probability measure $\P$, we start at time $0$ with one particle which we call $\varnothing$. 
The particle $\varnothing$ lives for a unit-mean and exponentially distributed
length of time $\tau_{\varnothing}$ until it dies, and is replaced by a random number of offspring with labels $1,2,\ldots,L_{\varnothing}$, where
$L_{\varnothing}$ is distributed like $L$ and is independent of 
$\tau_{\varnothing}$. 
These offspring then independently repeat this behaviour. 
That is, for each $u$ born at some time, $u$ lives a length of time $\tau_u$ 
distributed like $\tau_{\varnothing}$ and at death is replaced by offspring with labels $u1,u2,\ldots,uL_u$, where $L_u$ is distributed like $L$. 
Here, $\tau_u$ and $L_u$ are independent of each other and of the past. 
We write $\mathcal{N}_t$ for the set of particles alive at time $t$, $N_t = |\mathcal{N}_t|$ for the number alive at $t$, and let $F_t(s) := \E[s^{N_t}]$. We remark that $F_t(s)$ enjoys the semigroup property $F_{t_1} \circ F_{t_2} = F_{t_1 + t_2}$. \\

For $u \neq v$, we write $u < v$ if $u$ is an ancestor of $v$ (or equivalently, $v$ is a descendent of $u$) and $u \leq v$ if $u < v$ or $u = v$. Throughout we will use the terminology \emph{ancestor} and \emph{descendent} weakly, so that $u$ is both an ancestor and a descendent of itself.\\

A \emph{partition} $\gamma $ of a non-empty set $A$ 
is a collection of disjoint non-empty subsets of $A$, or \emph{blocks}, whose union is $A$. We write $| \gamma|$ for the number of blocks in $\gamma$, and for a block $\Gamma \in \gamma$, we write $|\Gamma|$ for the number of elements in $\Gamma$.
We write $\Pi^A$ for the collection of partitions of $A$, and write $\Pi^k := \Pi^{\{1,\ldots,k\} }$. If $B$ is a non-empty subset of $A$, and $\alpha$ is a partition of $A$, we write $\alpha^B$ (or $\alpha|^B$ when there are other superscripts present) for the projection of $\alpha$ onto $B$:
\begin{align*}
\alpha^B := \{ A' \cap B ~\text{non-empty} : A' \in \alpha \}.
\end{align*}
When projecting a partition $\alpha$ onto the set $\{1,\ldots,k\}$, we will write $\alpha^k$ (or $\alpha|^k$) in place of $\alpha^{\{1,\ldots,k\}}$. \\

For partitions $\alpha, \beta$, we say $\alpha$ can break into $\beta$, 
written $\alpha \prec \beta$ (or $\beta \succ \alpha$), if each block of $\alpha$ is a union of blocks in $\beta$. 
For example, $\big\{ \{1,2,4\}, \{3\}\big\} \prec \big\{ \{1\}, \{2,4\}, \{3\}\big\}$. 
An \emph{$n$-chain} (or just \emph{chain}) of partitions is a sequence of partitions $\boldsymbol\gamma = (\gamma_1, \ldots, \gamma_n)$ with the property that $\gamma_i \prec \gamma_{i+1}$ for every $i$. Let $\Pi^A_n$ denote the set of $n$-chains of partitions of $A$, and for $k \geq 1$ write $\Pi^k_n := \Pi_n^{ \{1,\ldots,k\}}$. Using the conventions $\gamma_0 = \big\{ \{1,\ldots,k\} \big\}$ and $\gamma_{n+1} = \big\{ \{1\}, \{2\}, \ldots, \{k \} \big\}$, for each $0 \leq i \leq n$, every block $\Gamma \in \gamma_i$ is the union of $b_i(\Gamma) \geq 1$ blocks of $\gamma_{i+1}$. Ordering blocks by their least element, we call the doubly indexed array $( b_i(\Gamma) : 0 \leq i \leq n, \Gamma \in \gamma_i )$ the \emph{fragmentation numbers} associated with the chain $\boldsymbol\gamma$, and $(b_i(\Gamma) : \Gamma \in \gamma_i)$ the \emph{fragmentation numbers at the level $i$}.\\

We will also use the terminology \emph{chain} for sequence of partitions satisfying $\gamma_i \succ \gamma_{i+1}$. Adopting the convention $\gamma_0 = \{ \{1\}, \ldots, \{k\} \}$ and $\gamma_{n+1}= \{ \{1,\ldots,k\} \}$, for each $0 \leq i \leq n$, each block $\Gamma \in \gamma_i$ is the union of $m_i(\Gamma)$ blocks of $\gamma_{i-1}$. We call the array $( m_i(\Gamma) : i \leq n, \Gamma \in \gamma_i )$ the \emph{merger numbers}.\\

A \emph{mesh} $(t_i)_{i \leq n}$ of a time interval $[0,T]$ is a collection of times $0 <  t_1 < \ldots < t_n < T$. Given a mesh $(t_i)_{i \leq n}$, we set $\Delta t_i := t_{i+1} - t_i$ (employing the convention $t_0 = 0, t_{n+1} = T$).\\

Whenever $h$ is a function, $h^j$ or $h^j(s)$ will refer to the $j^\text{\tiny{th}}$-derivative of the function (and $h(s)^j$ for the $j^\text{\tiny{th}}$ exponent). In particular, we will write $F^j_t(s)$ for the $j^\text{\tiny{th}}$-derivative of $F_t(s)$ with respect to $s$. 

\vspace{8mm}

Recall that under $\P$ we have a continuous-time Galton-Watson tree starting with one initial particle, branching at rate $1$ and with offspring numbers distributed like $L$. Additionally under $\P$, and on the event $\{N_T \geq k\}$, pick $k$ distinct particles $U_1,\ldots,U_k$ uniformly from those alive at time $T$. 
For each time $t \in [0,T]$, we define the equivalence relation 
$i \sim_{t} j$ if and only if $U_i$ and $U_j$ share a common ancestor alive at time $t$. 
We let $\pi^{k,L,T}_t$ denote the random partition of $\{1,\ldots,k\}$ corresponding to this equivalence relation. The resulting process $(\pi^{k,L,T}_t)_{t \in [0,T]}$, defined on the event $\{N_T \geq k\}$, is a right-continuous partition-valued stochastic process satisfying
\begin{align*}
\pi^{k,L,T}_0 = \Big\{ \{1,2,\ldots.,k\} \Big\} ,~~ \pi^{k,L,T}_T = \Big\{ \{1\}, \{2\}, \ldots, \{k\} \Big\}.
\end{align*}
Furthermore, $(\pi^{k,L,T}_t)_{t \in [0,T]}$ is a \emph{fragmentation process}, in the sense that blocks break as time passes:
\begin{align*}
t_1 < t_2 \implies \pi^{k,L,T}_{t_1} \prec \pi^{k,L,T}_{t_2}.
\end{align*}

Let $(\rho^{k,L,T}_t)_{t \in [0,T]}$ be the right-continuous modification of $(\pi^{k,L,T}_{T-t})_{t \in [0,T]}$. Then $(\rho^{k,L,T}_t)_{t \in [0,T]}$ is a \emph{coalescent process}, in the sense that blocks merge together as time passes:
\begin{align*}
t_1 < t_2 \implies \rho^{k,L,T}_{t_1} \succ \rho^{k,L,T}_{t_2}.
\end{align*}

We call the discontinuities $\tau_1 < \ldots < \tau_n$ of $(\pi^{k,L,T}_t)_{t \in [0,T]}$ \emph{split times}, since these times correspond to a block splitting into two or more blocks.\\

The event $\{ (\pi^{k,L,T}_t)_{t \in [0,T]}\text{ is binary} \}$ refers to the event that at every split time, a block splits into exactly two blocks. Note that
\begin{align*}
\{ (\pi^{k,L,T}_t)_{t \in [0,T]}\text{ is binary} \} = \{ (\pi^{k,L,T}_t)_{t \in [0,T]} \text{ has $k-1$ split times} \}.
\end{align*}
When the offspring generating function is of the form $f(s) = \alpha + \gamma s + \beta s^2$, no particle in the tree has more than two offspring upon death, and we call the underlying Galton-Watson tree a \emph{birth-death process}. It follows from that when the underlying tree is a birth-death process
\begin{align*}
\P \left(  (\pi^{k,L,T}_t)_{t \in [0,T]}\text{ is binary}~ \big|~ N_T \geq k \right) = 1.
\end{align*}
Whenever the tree is not a birth-death process, there is a positive probability that before time $T$ some particle in the underlying Galton-Watson tree is replaced by three or more offspring upon death, and hence
\begin{align*}
\P \left( (\pi^{k,L,T}_t)_{t \in [0,T]}\text{ is binary}~   \big| ~N_T \geq k \right) < 1
\end{align*}
for every $k \geq 3$. \\

\subsection{Hypotheses}\label{sec:hypotheses}
We need to ensure that there actually \textit{are} at least $k$ particles alive at time $T$ with positive probability, and that we can choose uniformly from them.
To be more precise, we must ensure that
both $\P(N_T \geq k) > 0$, and $\P(N_T < \infty) = 1$. 
The inequality  $\P(N_T \geq k) > 0$ is guaranteed to hold by virtue of our first hypothesis, which states that
\begin{align} \label{Non-triv}
f''(1) > 0.
\end{align}

In addition to \eqref{Non-triv}, we insist that the following non-explosion hypothesis holds:
\begin{align} \label{Non-exp}
\int_{1-\epsilon}^1 \frac{ ds }{ | f(s) - s |} = \infty,
\qquad 
\forall \epsilon \in (0,1).
\end{align}
This condition \eqref{Non-exp} is equivalent to our second requirement that $\P( N_t < \infty ) = 1$ for $t$, and holds whenever $f'(1) < \infty$ \cite[Chapter II, Theorem 9.1]{Har63}. 
We emphasize that both hypotheses \eqref{Non-triv} and \eqref{Non-exp} are in force in the remainder of this paper.\\
\bigskip

We are now ready to state our main results, which we split into three sections.
The results in Section \ref{SecLaw} concern fixed and finite $T$.
The results in Section \ref{SecAsy} concern the asymptotic regime in which $T$ is sent to $\infty$.
The results in Section \ref{SecProjectivity} concern the projectivity of the partition processes.

\subsection{Fixed-$T$ results} \label{SecLaw}
The results in this section describe the law of the process $(\pi^{k,L,T}_t)_{t \in [0,T]}$ for fixed times $T$ in terms of the generating functions $F_t(s) = \E[s^{N_t}]$ and $f(s) = \E[s^{L} ]$. \\

Our first fixed-$T$ result, Theorem \ref{MT FDD}, is a generalisation of Lambert's equation \eqref{lam_eq}, giving the finite dimensional distributions of the stochastic process $(\pi^{k,L,T}_t)_{t \in [0,T]}$.

\begin{thm}\label{MT FDD}
For any mesh $(t_i)_{i \leq n}$, and any chain of partitions $\boldsymbol\gamma = (\gamma_1, \ldots, \gamma_n)$ of $\{1,\ldots,k\}$, 
\begin{align} \label{EqFDD}
\P( \pi^{k,L,T}_{t_1} = \gamma_1, \ldots, \pi^{k,L,T}_{t_n} = \gamma_n, ~ N_T \geq k) = \int_0^1  \frac{(1-s)^{k-1}  }{(k-1)! } \prod_{i = 0}^{ n} \prod_{\Gamma \in \gamma_i } F_{\Delta t_i }^{b_i(\Gamma) } \left( F_{T- t_{i+1}}(s) \right) ds,
\end{align}
where $\Delta t_i = t_{i+1} - t_i$.
\end{thm}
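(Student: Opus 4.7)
The plan is to derive the formula from first principles by counting tagged $k$-tuples and applying a beta-function identity to introduce the integration in $s$. First I would use the elementary identity
\begin{align*}
\frac{\ind_{\{N \geq k\}}}{(N)_k} = \int_0^1 \frac{(1-s)^{k-1}}{(k-1)!} s^{N-k} \ind_{\{N \geq k\}} \, ds,
\end{align*}
where $(N)_k := N(N-1)\cdots(N-k+1)$, which follows from $\int_0^1 s^{n}(1-s)^{k-1}\,ds = B(n+1,k)$. Since any specific ordered $k$-tuple of distinct time-$T$ particles is sampled with probability $1/(N_T)_k$ on $\{N_T \geq k\}$, this yields
\begin{align*}
\P\big(\pi^{k,L,T}_{t_i} = \gamma_i \,\forall i,\, N_T \geq k\big) = \int_0^1 \frac{(1-s)^{k-1}}{(k-1)!} \E\!\left[ s^{N_T - k} M(\bga) \right] ds,
\end{align*}
where $M(\bga)$ is the number of ordered $k$-tuples of distinct time-$T$ particles whose genealogical partition process equals $\bga$.

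Next I would rewrite $M(\bga)$ as a sum over \emph{ancestor structures} $(V^{(i)}_\Gamma)_{0 \le i \le n,\,\Gamma \in \gamma_i}$, where $V^{(i)}_\Gamma$ is the particle alive at time $t_i$ designated to be the common ancestor of the $U_j$ with $j \in \Gamma$. Consistency requires that for each sub-block $\Gamma' \in \gamma_{i+1}$ with $\Gamma' \subset \Gamma$, the particle $V^{(i+1)}_{\Gamma'}$ is a descendant of $V^{(i)}_\Gamma$, and that the $b_i(\Gamma)$ such particles are distinct. Given a consistent ancestor structure, the number of ordered $k$-tuples realising it is the falling-factorial product $\prod_{\Gamma \in \gamma_n}(D_\Gamma)_{|\Gamma|}$, where $D_\Gamma$ is the number of time-$T$ descendants of $V^{(n)}_\Gamma$; conversely every ordered $k$-tuple with the correct genealogy determines its ancestor structure uniquely, so there is no overcounting.

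The core computation is then the evaluation of $\E\!\left[s^{N_T - k}\sum_{(V^{(i)}_\Gamma)} \prod_{\Gamma \in \gamma_n}(D_\Gamma)_{|\Gamma|}\right]$, which I would carry out by induction on the level using the independence of disjoint subtrees. For a particle $u$ alive at time $t_i$ and $\Gamma \in \gamma_i$, define
\begin{align*}
A^{(i)}_\Gamma(s) := \E_u\!\left[ s^{N_T^u - |\Gamma|} \sum \prod_{\Gamma^* \in \gamma_n,\, \Gamma^* \subseteq \Gamma}(D_{\Gamma^*})_{|\Gamma^*|} \right],
\end{align*}
where $N_T^u$ is the number of time-$T$ descendants of $u$ and the inner sum is over sub-ancestor structures rooted at $u$. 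The base case $A^{(n)}_\Gamma(s) = F_{\Delta t_n}^{|\Gamma|}(s)$ is immediate from the identity $\E[(M)_b z^{M-b}] = G^b(z)$ applied to $G = F_{\Delta t_n}$. For the inductive step I would condition on the time-$t_{i+1}$ descendants of $u$: the $b := b_i(\Gamma)$ sub-ancestors may be chosen in $(M)_b$ ordered ways from the $M := N_{\Delta t_i}^u$ descendants, each chosen $V^{(i+1)}_{\Gamma'}$ contributes $s^{|\Gamma'|}A^{(i+1)}_{\Gamma'}(s)$ from its independent subtree, and each unchosen descendant contributes $F_{T-t_{i+1}}(s)$. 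Using $\sum_{\Gamma' \subset \Gamma}|\Gamma'| = |\Gamma|$ to cancel the extra powers of $s$ and averaging over $M$ via $\E[(M)_b F_{T-t_{i+1}}(s)^{M-b}] = F_{\Delta t_i}^b(F_{T-t_{i+1}}(s))$ yields the recursion
\begin{align*}
A^{(i)}_\Gamma(s) = F_{\Delta t_i}^{b_i(\Gamma)}(F_{T-t_{i+1}}(s))\prod_{\Gamma' \subset \Gamma,\,\Gamma' \in \gamma_{i+1}}A^{(i+1)}_{\Gamma'}(s),
\end{align*}
whose unfolding is exactly the product appearing in \eqref{EqFDD}.

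The main obstacle is the careful combinatorial bookkeeping at the inductive step: one must verify that $(M)_{b_i(\Gamma)}$ counts precisely the ordered injections from the sub-blocks (canonically labelled by their least elements) into the time-$t_{i+1}$ descendants of $u$, and that the resulting factorisation across chosen and unchosen descendants is clean. Conceptually the recursion is the partition-indexed Fa\`a di Bruno formula \eqref{FdB} applied to the semigroup identity $F_{T-t_i} = F_{\Delta t_i} \circ F_{T-t_{i+1}}$, which also serves as a useful consistency check on the product form of the answer.
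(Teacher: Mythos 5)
Your proposal is correct, and after the opening move it takes a genuinely different route from the paper. The shared ingredient is the beta-integral inversion: your first identity is exactly \eqref{EqBetaRep} (the content of Lemma \ref{Betalem}), and combining it with uniform sampling reduces the theorem to evaluating $\E\big[s^{N_T-k}M(\bga)\big]$, which is precisely where the paper's proof also stands after \eqref{chimp1} and \eqref{uniform def}. The paper then computes this quantity through the spine machinery of Section \ref{SpinesSec} --- the weights $Q(u)$, the change of measure $\Q^{k,T}$, the uniformity property of Lemma \ref{Quniform}, and the inductive computation of Theorem \ref{keytheorem} --- which requires $\E[N_t^{(k)}]<\infty$ for the normalisation and therefore a separate truncation-and-domination argument in Section \ref{no moment}. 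You instead compute $\E[s^{N_T-k}M(\bga)]$ directly, decomposing the count of admissible ordered $k$-tuples over the lattice of common ancestors $V^{(i)}_\Gamma$ and running a backwards recursion over mesh levels via the branching property; your recursion $A^{(i)}_\Gamma=F_{\Delta t_i}^{b_i(\Gamma)}\big(F_{T-t_{i+1}}\big)\prod_{\Gamma'}A^{(i+1)}_{\Gamma'}$ is, level by level, the same subtree decomposition that drives the induction in Theorem \ref{keytheorem}, stripped of the spines and the measure change. What your route buys: it is elementary and self-contained, the generalised Fa\`a di Bruno structure of Lemma \ref{fdb j} is visible term by term, and since $M(\bga)\le N_T^{(k)}$, $n^{(k)}s^{n-k}$ is bounded in $n$ for fixed $s<1$, and $F_t(s)<1$ for $s<1$, every interchange is justified by Tonelli and no $k$th-moment hypothesis is ever invoked --- so the lifting argument of Section \ref{no moment} becomes unnecessary. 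What the paper's route buys: the measures $\Q^{\alpha,T}$ and Theorem \ref{keytheorem} are reusable objects (they give the joint law of the spine partition process with $N_T$, feed the mixture representation of Theorem \ref{mixture markov}, and connect to \cite{HJR17}), whereas your ancestor-structure bookkeeping is bespoke to this one formula. The points you flag as needing care are indeed the only ones, and they are routine: distinctness of sub-ancestors across different parent blocks follows automatically from distinctness of the parents, and the correspondence between tuples with genealogy $\bga$ and pairs consisting of a consistent ancestor structure together with an injective assignment of time-$T$ descendants is a bijection because the structure is recovered uniquely from the tuple.
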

Our next result, Theorem \ref{MT Split}, characterises the law of $(\pi^{k,L,T}_t)_{t \in [0,T]}$ in terms of its split times. To this end, let $\boldsymbol{\eta} = (\eta_0,\ldots,\eta_{n})$ be a chain of partitions such that $\eta_0 = \{ \{1,\ldots,k\} \}$, $\eta_{n} = \{ \{1\},\ldots,\{k\}\}$. We say $\boldsymbol\eta$ is \emph{maximal} if $\eta_{i}$ is obtained from $\eta_{i-1}$ by breaking precisely one block of $\eta_{i-1}$ into $q_i \geq 2$ blocks in $\eta_{i}$. \\

Roughly speaking, given a maximal chain $\boldsymbol\eta = (\eta_0,\ldots,\eta_{n})$, the following theorem gives the joint density of the $n$ times that the process `jumps' from the value $\eta_{i-1}$ to $\eta_i$, characterising the joint law of the split times of $(\pi^{k,L,T}_t)_{t \in [0,T]}$.

\begin{thm} \label{MT Split}
Let $\boldsymbol\eta = (\eta_0,\ldots,\eta_n)$ be a maximal chain, and let $a_1 < b_1 < a_2 < b_2 < \ldots a_n < b_n$. Then
\begin{align} \label{EqSplit}
& \P( \pi^{k,L,T}_{a_i} = \eta_{i-1}, \pi^{k,L,T}_{b_i} = \eta_{i} ~\forall~ i = 1,\ldots,n, ~ N_T \geq k )\\ 
&= \int_{a_1}^{b_1} \ldots \int_{a_n}^{b_n} du_1 \ldots du_n \int_0^1 \frac{ (1-s)^{k-1}}{ (k-1)!  } F_T'(s)\prod_{ i=1}^n f^{q_i}( F_{T-u_i} (s)) F_{T-u_i}'(s)^{q_i - 1}  ds,
\end{align}
where $q_i = 1 + |\eta_i| - |\eta_{i-1}|$.
\end{thm}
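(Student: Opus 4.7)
My plan is to compute the joint density at $(u_1,\ldots,u_n)$ of the $n$ split times of $(\pi^{k,L,T}_t)_{t \in [0,T]}$ as it traces through the chain $\boldsymbol\eta$, and then integrate it over $\prod_{i=1}^n (a_i, b_i)$. The starting point is Theorem~\ref{MT FDD}, applied to a mesh that tightly brackets each candidate split time.

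For small $\epsilon > 0$ and $u_1 < \cdots < u_n$ with $u_i \in (a_i + \epsilon, b_i - \epsilon)$, I would apply Theorem~\ref{MT FDD} to the $2n$-point mesh $(u_1 - \epsilon, u_1 + \epsilon, \ldots, u_n - \epsilon, u_n + \epsilon)$ paired with the chain $(\eta_0, \eta_1, \eta_1, \eta_2, \ldots, \eta_{n-1}, \eta_n)$, where each $\eta_j$ is repeated to record that the partition is constant between consecutive splits. The product in \eqref{EqFDD} then has factors of two kinds. Each ``split'' subinterval $(u_i - \epsilon, u_i + \epsilon)$ sees one block of $\eta_{i-1}$ break into $q_i$ subblocks and contributes $F^{q_i}_{2\epsilon}(F_{T-u_i-\epsilon}(s))$ together with $|\eta_{i-1}|-1$ factors of $F'_{2\epsilon}(\cdot)$; each ``non-split'' subinterval of length approximately $u_{j+1}-u_j$ sees the partition equal to $\eta_j$ and contributes $\prod_{\Gamma \in \eta_j}F'_{u_{j+1}-u_j-2\epsilon}(F_{T-u_{j+1}+\epsilon}(s))$. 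From the Kolmogorov forward equation $\partial_t F_t = f(F_t) - F_t$ together with $F_0(s) = s$ I extract the short-time expansions $F^q_\epsilon(s) = \epsilon f^q(s) + O(\epsilon^2)$ for $q \geq 2$ and $F'_\epsilon(s) = 1 + O(\epsilon)$, which after dividing by $(2\epsilon)^n$ and sending $\epsilon \to 0$ isolate the joint density of the split times.

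The heart of the argument is to collapse the product of non-split factors. The chain rule $F'_{t_1+t_2}(s) = F'_{t_1}(F_{t_2}(s))\,F'_{t_2}(s)$ yields $F'_{u_{j+1}-u_j}(F_{T-u_{j+1}}(s)) = F'_{T-u_j}(s)/F'_{T-u_{j+1}}(s)$, so under the convention $u_0=0,\, u_{n+1}=T$ the non-split contribution becomes $\prod_{j=0}^n [F'_{T-u_j}(s)/F'_{T-u_{j+1}}(s)]^{|\eta_j|}$. Using $|\eta_0|=1$, $|\eta_n|=k$, $|\eta_j|-|\eta_{j-1}|=q_j-1$, and $F'_0(s)=1$, this telescopes to $F'_T(s)\prod_{j=1}^n F'_{T-u_j}(s)^{q_j-1}$. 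Combining with the product $\prod_{i=1}^n f^{q_i}(F_{T-u_i}(s))$ of split factors and integrating against $\frac{(1-s)^{k-1}}{(k-1)!}\,ds$ and then over $u_i \in (a_i,b_i)$ produces~\eqref{EqSplit}. The main obstacle is precisely this telescoping bookkeeping: tracking how the block counts $|\eta_j|$ combine with the chain-rule exponents to produce the endpoint factor $F'_T(s)$ and the interior exponents $q_j - 1$.
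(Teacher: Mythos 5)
Your argument is, in substance, the paper's own proof: the same short-time expansions $F^q_h(s) = h f^q(s) + o(h)$ for $q \geq 2$ and $F'_h(s) = 1 + o(1)$ (recorded in the paper as \eqref{small h}), and the same telescoping of the non-split factors via $F'_t(F_{T-t}(s)) = F'_T(s)/F'_{T-t}(s)$ together with $|\eta_i| - |\eta_{i-1}| = q_i - 1$. The one organisational difference is where the $s$-integral sits. The paper routes through the mixture representation of Theorem \ref{mixture markov}: for each fixed $s$ it computes $\lim_{h_i \downarrow 0}(h_1\cdots h_n)^{-1}\,\mathbb{R}^{k,L,T}_s(\tpi_{u_i}=\eta_{i-1},\,\tpi_{u_i+h_i}=\eta_i\ \forall i)$ from the Markov finite-dimensional formula, states the resulting integral identity under $\mathbb{R}^{k,L,T}_s$, and only then integrates against $m^{k,L,T}(ds)$ by Fubini. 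You instead bracket the split times directly inside the $\P$-level formula of Theorem \ref{MT FDD}, keeping $\int_0^1 \frac{(1-s)^{k-1}}{(k-1)!}(\cdots)\,ds$ inside the limit; to send $\epsilon \to 0$ under that integral after dividing by $(2\epsilon)^n$ you need an $\epsilon$-uniform domination, and the crude bound \eqref{semigroup bound} does not survive the division by $(2\epsilon)^n$, so this interchange requires its own short argument -- fixing $s$ first, as the paper does, sidesteps it.

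One further caution, which applies equally to the paper's ``It follows that'' step: passing from the limiting density back to the exact finite-window identity \eqref{EqSplit} is only immediate when each transition $\eta_{i-1}\to\eta_i$ is forced to be a single jump, i.e.\ when every $q_i=2$. If some $q_i \geq 3$, the event $\{\pi^{k,L,T}_{a_i}=\eta_{i-1},\,\pi^{k,L,T}_{b_i}=\eta_i\}$ also contains paths that refine through intermediate partitions inside $(a_i,b_i)$; these contribute only $O(\epsilon^2)$ to your bracketed probabilities and so are invisible in the limiting density, yet they do contribute to the left-hand side of \eqref{EqSplit} (for instance with $f(s)=s^2$, $k=3$, $n=1$, $q_1=3$ the proposed integrand contains $f^3\equiv 0$, while Theorem \ref{MT FDD} applied to the mesh $(a_1,b_1)$ assigns the bracketed event a positive value). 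So the final sentence ``integrating over $u_i\in(a_i,b_i)$ produces \eqref{EqSplit}'' should either be restricted to binary maximal chains, or the conclusion should be phrased in terms of the process jumping directly from $\eta_{i-1}$ to $\eta_i$ in each window, in line with the paper's own informal description of the theorem as giving the joint density of the jump times; in any case this inversion step deserves an explicit argument rather than a single clause, in your write-up as in the paper's.
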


The following lemma is a generalisation of the Fa\`a di Bruno formula \eqref{FdB}, shedding light on the products occuring in the integral in \eqref{EqFDD}.
\begin{lem} \label{fdb j}
Let $g_0, \ldots, g_n$ be $k$-times differentiable. Then
\begin{align} \label{fdb gen}
 (g_0  \circ g_1 \circ \ldots \circ g_n )^k = \sum_{ \boldsymbol\gamma \in \Pi^k_n } \prod_{i = 0}^n \prod_{ \Gamma \in \gamma_i } g_i^{ b_i(\Gamma) } \circ g_{i+1} \circ \ldots \circ g_n.
\end{align}
In particular, for any semigroup $(F_t)_{t \geq 0}$ of $k$-times differentiable functions, and any mesh $(t_i)_{i \leq n}$ of $[0,T]$, we have
\begin{align} \label{semigroup FdB}
 F_T^k(s) = \sum_{ \boldsymbol\gamma \in \Pi^k_n } \prod_{i = 0}^n \prod_{ \Gamma \in \gamma_i } F_{\Delta t_i }^{ b_i(\Gamma)} \left( F_{T - t_{i+1}}(s)\right).
\end{align}
\end{lem}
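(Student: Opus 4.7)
The plan is to prove \eqref{fdb gen} by induction on $n \geq 1$, using the classical Faà di Bruno formula \eqref{FdB} both as the base case and as the engine for the inductive step. For the base case $n = 1$, a $1$-chain consists of a single partition $\gamma_1 \in \Pi^k$, and with the endpoint conventions $\gamma_0 = \{\{1,\ldots,k\}\}$, $\gamma_2 = \{\{1\},\ldots,\{k\}\}$ one has $b_0(\{1,\ldots,k\}) = |\gamma_1|$ and $b_1(\Gamma) = |\Gamma|$ for each $\Gamma \in \gamma_1$. Substituting these into the right-hand side of \eqref{fdb gen} reproduces \eqref{FdB} applied to $f = g_0$ and $g = g_1$.

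For the inductive step, write $g_0 \circ g_1 \circ \cdots \circ g_n = g_0 \circ h$ with $h := g_1 \circ \cdots \circ g_n$, and apply \eqref{FdB} to obtain
\[ (g_0 \circ h)^k = \sum_{\gamma_1 \in \Pi^k} \big( g_0^{|\gamma_1|} \circ h \big) \prod_{\Gamma \in \gamma_1} h^{|\Gamma|}. \]
Then I apply the inductive hypothesis to each factor $h^{|\Gamma|}$, which is an order-$|\Gamma|$ derivative of the composition $g_1 \circ \cdots \circ g_n$ of $n$ functions and hence expands as a sum over $(n-1)$-chains of partitions of $\Gamma$. The combinatorial heart of the argument is that every $\boldsymbol\gamma = (\gamma_1, \ldots, \gamma_n) \in \Pi^k_n$ decomposes uniquely as a top-level partition $\gamma_1 \in \Pi^k$ together with, for each $\Gamma \in \gamma_1$, an $(n-1)$-chain of partitions of $\Gamma$ obtained by restricting $(\gamma_2, \ldots, \gamma_n)$ to $\Gamma$; under this decomposition, $b_0 = |\gamma_1|$, and for $i \geq 1$ the breakage number $b_i(\Gamma')$ of a block $\Gamma' \subseteq \Gamma$ coincides (after the index shift $i \mapsto i-1$) with the corresponding breakage number in the sub-chain on $\Gamma$. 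Multiplying the sub-chain expansions across the blocks $\Gamma \in \gamma_1$ and collecting therefore reconstructs precisely the right-hand side of \eqref{fdb gen}.

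The semigroup identity \eqref{semigroup FdB} is then an immediate specialization: setting $g_i = F_{\Delta t_i}$, the semigroup property gives $g_0 \circ \cdots \circ g_n = F_T$ and $g_{i+1} \circ \cdots \circ g_n = F_{T - t_{i+1}}$, so that $g_i^{b_i(\Gamma)} \circ g_{i+1} \circ \cdots \circ g_n$ evaluated at $s$ equals $F_{\Delta t_i}^{b_i(\Gamma)}(F_{T-t_{i+1}}(s))$, and \eqref{fdb gen} becomes \eqref{semigroup FdB}. The only delicate point in the whole argument is the index bookkeeping in the inductive step -- verifying that the breakage data of the sub-chains, suitably reindexed, together with $b_0 = |\gamma_1|$, reconstructs exactly the breakage structure of a chain in $\Pi^k_n$.
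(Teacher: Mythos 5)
Your proof is correct, but it takes a genuinely different route from the paper. You induct on the number $n$ of composed functions: the base case is the classical Fa\`a di Bruno formula \eqref{FdB}, and the inductive step writes $g_0\circ\cdots\circ g_n = g_0\circ h$, applies \eqref{FdB} at the top level, expands each $h^{|\Gamma|}$ by the inductive hypothesis (applied with $k$ replaced by $|\Gamma|$ and labels taken from $\Gamma$), and then uses the bijection between chains $\boldsymbol\gamma\in\Pi^k_n$ and pairs consisting of a top partition $\gamma_1$ plus, for each block $\Gamma\in\gamma_1$, the restricted $(n-1)$-chain on $\Gamma$; your bookkeeping of the breakage numbers ($b_0=|\gamma_1|$, and the shift $i\mapsto i-1$ matching $b_i(\Gamma')$ with the sub-chain data) is exactly right, and the semigroup specialisation \eqref{semigroup FdB} is immediate as you say. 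The paper instead proves the stronger projection identity \eqref{faa projection} -- the $j$-th derivative of a single chain-product expands over chains in $\Pi^{k+j}_n$ projecting onto the given chain -- by induction on the derivative order $j$, with the $j=1$ step done via the Leibniz rule and an explicit bijection between pairs $(i,\Gamma)$ and extended chains; \eqref{fdb gen} then falls out by taking the trivial chain and $j=k-1$, without ever invoking the classical formula \eqref{FdB}. Your argument is shorter and more self-evidently combinatorial, but it assumes \eqref{FdB} as known (which the paper does state with a citation, so this is legitimate) and it only yields \eqref{fdb gen}; the paper's route buys the more general identity \eqref{faa projection}, which is needed again in the proof of Theorem \ref{MT projection}, so the extra generality there is not wasted.
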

The identity \eqref{semigroup FdB} is used to interpret $(\pi^{k,L,T}_t)_{t \in [0,T]}$ as a mixture of Markov processes as follows.\\

Recall that if $B$ is a subset of $A$ and $\alpha$ is a partition of $A$, $\alpha^B$ is the projection of $\alpha$ onto $B$. We say a $\Pi^k$-valued (time-inhomogeneous) Markov fragmentation process $(\tilde{\pi}_t)_{t \in [0,T]}$ has the \emph{independent blocks property} if given $\{\tpi_{t_0 } = \gamma\}$, the stochastic processes $\left\{ (\tpi_t^\Gamma)_{t \in [t_0,T]} : \Gamma \in \gamma \right\}$ are conditionally independent. Any such process under a law $\P$ is characterised by the quantities \[\P( \pi^\Gamma_{t_2}  = \delta | \pi_{t_1} = \gamma ),\] 
where $t_1 < t_2 < T$, $\gamma$ is a partition of $\{1,\ldots,k\}$, $\Gamma$ is a block of $\gamma$, and $\delta$ is a partition of the block $\Gamma$. \\

In Section \ref{SecMixture} we show using \eqref{semigroup FdB} that there exists a Markov process $(\tpi_t)_{t \in [0,T]}$ starting from $\tpi_0 = \{ \{ 1,\ldots,k\} \}$ under a probability law $\mathbb{R}^{k,L,T}_s$ with the independent blocks property and transition density 
\begin{align} \label{tpi rates}
\mathbb{R}^{k,L,T}_s( \tpi^\Gamma_{t_2} = \delta | \tpi_{t_1} = \gamma) := \frac{ F_{t_2-t_1}^{|\delta|} ( F_{T-t_2}(s)) \prod_{ \Delta \in \delta } F_{T-t_2}^{|\Delta|}(s) }{ F_{T-t_1}^{|\Gamma|}(s) }, ~~~ t_2 \geq t_1.
\end{align}

Our final fixed-$T$ result, Theorem \ref{mixture markov}, states that the process $(\pi^{k,L,T}_t)_{t \in [0,T]}$ can be constructed as a random mixture $m^{k,L,T}$ of processes with laws given by $\{ \mathbb{R}^{k,L,T}_s : s \in [0,1] \}$, where the mixture measure is given by
\begin{align} \label{mixture measure}
m^{k,L,T}(ds) :=  \frac{ (1-s)^{k-1} F_T^k(s) }{ (k-1)! \P( N_T \geq k ) } .
\end{align}

\begin{thm} \label{mixture markov}
The conditional law of $(\pi^{k,L,T}_t)_{t \in [0,T]}$ on the event $\{N_T \geq k \}$ is given by 
\begin{align*}
\P ( \pi^{k,L,T}_{t_1} = \gamma_1, \ldots, \pi^{k,L,T}_{t_n} = \gamma_n ~ | ~ N_T \geq k ) = \int_0^1 m^{k,L,T}(ds) \mathbb{R}^{k,L,T}_s ( \tpi_{t_1} = \gamma_1, \ldots, \tpi_{t_n} = \gamma_n  ).
\end{align*}
In particular, $m^{k,L,T}(ds)$ is a probability measure and $(\pi^{k,L,T}_t)_{t \in [0,T]}$ is a mixture of Markov processes with the independent blocks property.
\end{thm}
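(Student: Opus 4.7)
The plan is to compute $\mathbb{R}^{k,L,T}_s(\tpi_{t_1}=\gamma_1,\ldots,\tpi_{t_n}=\gamma_n)$ in closed form and compare directly with the integrand of Theorem \ref{MT FDD}. A preliminary step is to verify that \eqref{tpi rates} actually defines a Markov process, which requires showing that the one-block transition probabilities sum to one and that they satisfy Chapman--Kolmogorov. Both facts follow from Lemma \ref{fdb j} applied to the semigroup relation $F_{T-t_1}=F_{t_2-t_1}\circ F_{T-t_2}$. Indeed, the $n=1$ case of \eqref{fdb gen} gives
\begin{align*}
F^{|\Gamma|}_{T-t_1}(s) = \sum_{\delta\in\Pi^{\Gamma}} F^{|\delta|}_{t_2-t_1}(F_{T-t_2}(s))\prod_{\Delta\in\delta}F^{|\Delta|}_{T-t_2}(s),
\end{align*}
which upon dividing by the left-hand side shows $\sum_{\delta}\mathbb{R}^{k,L,T}_s(\tpi^\Gamma_{t_2}=\delta\mid\tpi_{t_1}=\gamma)=1$. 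Chapman--Kolmogorov is the analogous two-step identity, obtained from the $n=2$ case of Lemma \ref{fdb j}.

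Next I invoke the Markov property and the independent-blocks property to factorise
\begin{align*}
\mathbb{R}^{k,L,T}_s(\tpi_{t_1}=\gamma_1,\ldots,\tpi_{t_n}=\gamma_n) = \prod_{i=0}^{n-1}\prod_{\Gamma\in\gamma_i}\mathbb{R}^{k,L,T}_s(\tpi^\Gamma_{t_{i+1}}=\gamma_{i+1}^\Gamma\mid\tpi_{t_i}=\gamma_i),
\end{align*}
with $\gamma_0=\{\{1,\ldots,k\}\}$ and $t_0=0$. Substituting \eqref{tpi rates} and noting that $|\gamma_{i+1}^\Gamma|=b_i(\Gamma)$ and $\prod_{\Gamma\in\gamma_i}\prod_{\Delta\in\gamma_{i+1}^\Gamma}=\prod_{\Delta\in\gamma_{i+1}}$, the key observation is that the numerator factor $\prod_{\Delta\in\gamma_{i+1}}F^{|\Delta|}_{T-t_{i+1}}(s)$ produced at level $i$ cancels exactly with the denominator $\prod_{\Gamma\in\gamma_{i+1}}F^{|\Gamma|}_{T-t_{i+1}}(s)$ at level $i+1$. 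After this telescoping, and using the boundary conventions $\Delta t_n=T-t_n$ together with $F_0(s)=s$ to rewrite the uncancelled tail $\prod_{\Delta\in\gamma_n}F^{|\Delta|}_{T-t_n}(s)$ as the $i=n$ factor $\prod_{\Gamma\in\gamma_n}F^{b_n(\Gamma)}_{\Delta t_n}(F_{T-t_{n+1}}(s))$, one is left with
\begin{align*}
\mathbb{R}^{k,L,T}_s(\tpi_{t_1}=\gamma_1,\ldots,\tpi_{t_n}=\gamma_n) = \frac{1}{F^k_T(s)}\prod_{i=0}^n\prod_{\Gamma\in\gamma_i}F^{b_i(\Gamma)}_{\Delta t_i}(F_{T-t_{i+1}}(s)).
\end{align*}

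Multiplying through by $m^{k,L,T}(ds)$ from \eqref{mixture measure} cancels $F^k_T(s)$, and the resulting integrand is exactly that appearing in Theorem \ref{MT FDD} divided by $\P(N_T\geq k)$, which closes the identification. That $m^{k,L,T}$ is a probability measure falls out by specialising Theorem \ref{MT FDD} to the empty chain $n=0$, which yields $\P(N_T\geq k)=\int_0^1 \frac{(1-s)^{k-1}}{(k-1)!}F^k_T(s)\,ds$. The main obstacle is purely bookkeeping: aligning the double product $\prod_i\prod_\Gamma$ through the telescoping while respecting the boundary conventions at $i=0$ and $i=n$. No genuine difficulty arises beyond a careful application of Lemma \ref{fdb j} and the semigroup property of $F_t$.
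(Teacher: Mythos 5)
Your proposal is correct and is essentially the paper's own argument run in the opposite direction: the paper defines the candidate finite-dimensional distributions $R^{k,L,T}_{t_1,\ldots,t_n}(s,\boldsymbol\gamma)$ directly from the Theorem \ref{MT FDD} integrand and then verifies Markovianity, the independent blocks property and the transition density \eqref{tpi rates} by computing conditional ratios (its identity \eqref{chopin product}), which is precisely your telescoping cancellation read backwards. Your preliminary check that \eqref{tpi rates} is a stochastic, Chapman--Kolmogorov-consistent kernel via the $n=1,2$ cases of Lemma \ref{fdb j}, and your normalization of $m^{k,L,T}$ via the $n=0$ case of Theorem \ref{MT FDD}, correspond exactly to the paper's use of \eqref{semigroup FdB 2}, \eqref{semigroup bound} and Lemma \ref{Betalem} with $X=1$, so no genuinely new route or gap is involved.
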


\subsection{Asymptotic-$T$ results} \label{SecAsy}

We now move on to results concerning the asymptotic behaviour of $(\pi^{k,L,T}_t)_{t \in [0,T]}$ as $T \to \infty$. Below we say a collection of partition-valued stochastic processes $\{(\pi^T_t)_{t \geq 0} : T > 0\}$ converge in distribution to a stochastic process $(\bar{\pi}_t)_{t \geq 0 }$ as $T \to \infty$ if the finite dimensional distributions converge:
\begin{align*}
\lim_{T \to \infty} \P( \pi^T_{t_1} = \gamma_1,\ldots, \pi^T_{t_n} = \gamma_n) = \P( \bar{\pi}_{t_1} = \gamma_1, \ldots, \bar{\pi}_{t_n} = \gamma_n).
\end{align*}

First we will consider the supercritical case $m>1$. Given the stochastic process $(\pi^{k,L,T}_t)_{t \in [0,T]}$ defined on $[0,T]$, we define its extension $(\pi^{k,L,T}_t)_{t \geq 0}$ to all of $[0,\infty)$ by setting 
\begin{align*}
\pi^{k,L,T}_t = \big\{ \{1\}, \ldots, \{ k \} \big\}
\end{align*}
whenever $t > T$.

\begin{thm} \label{MTSuper}
Let $m > 1$ and $\E[ L \log_+ L] < \infty$. Then as $T \to \infty$, conditioned on $\{N_T \geq k\}$ the process $(\pi^{k,L,T}_t)_{t \geq 0}$ converges in distribution to a stochastic process $(\bar{\pi}^{k,L}_t)_{t \geq 0 }$ with finite dimensional distributions given by 
\begin{align} \label{super}
& \P( \bar{\pi}^{k,L}_{t_1} = \gamma_1, \ldots, \bar{\pi}^{k,L}_{t_n}  = \gamma_n) \nonumber
\\ &=\frac{(-1)^k e^{ - k(m-1)t_n} }{ 1 - \varphi(\infty)} \int_0^\infty \frac{ v^{k-1}}{ (k-1)!} \prod_{i =0}^{n-1} \prod_{ \Gamma \in \gamma_i} F_{ \Delta t_i}^{b_i(\Gamma)} \left( \varphi( e^{ - (m-1)t_{i+1}} v ) \right) \prod_{ \Gamma \in \gamma_n} \varphi^{ | \Gamma|  }( e^{ - (m-1)t_n } v ) dv, \end{align}
where $\varphi(v) := \E [ e^{ - v W_\infty} ]$ is the Laplace transform of the martingale limit $W_\infty := \lim_{T \to \infty} N_T e^{- (m-1)T}.$
\end{thm}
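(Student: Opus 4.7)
The plan is to pass to the limit $T \to \infty$ inside the fixed-$T$ formula \eqref{EqFDD} of Theorem \ref{MT FDD} after the substitution $s = \exp(-v e^{-(m-1)T})$, mimicking the calculation sketched in Section 2.1 for the case $k=2$, $n=1$. For $T$ large enough that $t_n < T$, Theorem \ref{MT FDD} gives
\[
\P(\pi^{k,L,T}_{t_1} = \gamma_1, \ldots, \pi^{k,L,T}_{t_n} = \gamma_n, N_T \geq k) = \int_0^1 \frac{(1-s)^{k-1}}{(k-1)!} \prod_{\Gamma \in \gamma_n} F_{T-t_n}^{b_n(\Gamma)}(s) \prod_{i=0}^{n-1} \prod_{\Gamma \in \gamma_i} F_{\Delta t_i}^{b_i(\Gamma)}\!\left(F_{T-t_{i+1}}(s)\right) ds,
\]
where I have pulled the $i=n$ factor out using $F_{T-t_{n+1}}(s) = F_0(s) = s$. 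Under the substitution, $ds = -e^{-(m-1)T} s\, dv$, $(1-s)^{k-1} \sim v^{k-1} e^{-(k-1)(m-1)T}$, and the interval $s \in (0,1)$ transforms to $v \in (0, \infty)$.

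The $i=n$ product carries the entire scaling. Since $\gamma_{n+1}$ is the partition into singletons, $b_n(\Gamma) = |\Gamma|$ and $\sum_{\Gamma \in \gamma_n} |\Gamma| = k$; applying \eqref{sibelius} to each of the $|\gamma_n|$ factors separately gives
\[
\prod_{\Gamma \in \gamma_n} F_{T-t_n}^{|\Gamma|}\!\left(e^{-v e^{-(m-1)T}}\right) \sim (-1)^k e^{k(m-1)T} e^{-k(m-1)t_n} \prod_{\Gamma \in \gamma_n} \varphi^{|\Gamma|}\!\left(v e^{-(m-1)t_n}\right),
\]
whose factor $e^{k(m-1)T}$ exactly cancels the $e^{-k(m-1)T}$ coming from $ds \cdot (1-s)^{k-1}$. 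For each intermediate $i < n$, the increment $\Delta t_i$ is fixed and $F_{\Delta t_i}^{(j)}$ is continuous, while the inner argument $F_{T-t_{i+1}}\!\left(e^{-v e^{-(m-1)T}}\right)$ rewrites as $\E\!\left[\exp\!\left(-v e^{-(m-1)t_{i+1}} \cdot N_{T-t_{i+1}} e^{-(m-1)(T-t_{i+1})}\right)\right]$, which converges to $\varphi(v e^{-(m-1)t_{i+1}})$ by the Kesten-Stigum theorem. Assembled together, the pointwise limit of the integrand matches the integrand on the right-hand side of \eqref{super}.

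The main technical obstacle is to justify the exchange of limit and integration on the unbounded domain $v \in (0,\infty)$. A dominated convergence argument should succeed: on compact $v$-intervals, a uniform-in-$T$ majorant follows from differentiating under the expectation to bound $F_{T-t_n}^{(j)}(s) \leq e^{j(m-1)(T-t_n)} g_j(v)$ with $g_j$ locally bounded, and for large $v$ the values $s = \exp(-v e^{-(m-1)T})$ are uniformly bounded away from $1$ once $T \geq T_0$, so that classical estimates for supercritical generating functions yield integrable tails. Finally, $\P(N_T \geq k) \to \P(\mathrm{survival}) = 1 - q$, and under the Kesten-Stigum hypothesis $q = \P(W_\infty = 0) = \varphi(\infty)$; dividing produces the prefactor $(1-\varphi(\infty))^{-1}$ in \eqref{super}.
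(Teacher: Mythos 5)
Your reduction to Theorem \ref{MT FDD}, the substitution $s=e^{-ve^{-(m-1)T}}$, the isolation of the $i=n$ factors with $b_n(\Gamma)=|\Gamma|$, the pointwise limits via \eqref{sibelius} (Lemma \ref{tech lemma}) and Kesten--Stigum, and the prefactor $\P(N_T\geq k)\to 1-\varphi(\infty)$ all coincide with the paper's argument. The place where your proposal genuinely diverges is the justification of the interchange of limit and integral, and as written that step has a gap. Your proposed majorant is only claimed on compact $v$-intervals, which does not give an integrable dominating function on $(0,\infty)$: the factor-wise bound $e^{-j(m-1)(T-t)}F^{j}_{T-t}(s)\leq \sup_x x^{j}e^{-vxe^{-(m-1)t}}\lesssim v^{-j}$ produces, after multiplying by $v^{k-1}$ and taking the product over $\Gamma\in\gamma_n$ (with $\sum_\Gamma|\Gamma|=k$), a bound of order $v^{-1}$ at both ends of the integration range, which is not integrable; replacing it by moment bounds such as $F^{j}_{T-t}(s)\leq \E[N_{T-t}^{(j)}]$ requires $k^{\text{th}}$ moments, which are \emph{not} implied by $\E[L\log_+L]<\infty$. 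Moreover, the tail claim that for large $v$ the points $s=\exp(-ve^{-(m-1)T})$ are ``uniformly bounded away from $1$ once $T\geq T_0$'' is false: for any fixed $v$, $s\to 1$ as $T\to\infty$, and it is precisely near $s=1$ that the intermediate factors $F^{b_i(\Gamma)}_{\Delta t_i}(\cdot)$ may blow up when higher moments of $L$ are infinite.

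The paper circumvents all of this without any extra moment hypothesis by a different domination scheme: by the positivity in the generalised Fa\`a di Bruno identity, \eqref{semigroup bound}, the entire product in the integrand is dominated by $F_T^k(s)$, so $0\leq G^T(v)\leq H^T(v)$ where $H^T$ is the density obtained from $\frac{(1-s)^{k-1}F_T^k(s)}{(k-1)!\,\P(N_T\geq k)}$ under the same change of variables. By Lemma \ref{Betalem} each $H^T(v)\,dv$ is exactly a probability measure, $H^T\to H$ pointwise with $H(v)=\frac{(-1)^kv^{k-1}\varphi^k(v)}{(k-1)!(1-\varphi(\infty))}$, and $\int_0^\infty H(v)\,dv=1$ is checked directly via Fubini and the gamma integral using $\P(W_\infty>0)=1-\varphi(\infty)$. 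The conclusion $\int G^T\to\int G$ then follows from the generalised dominated convergence theorem, Lemma \ref{DOM}, which requires only a convergent sequence of dominating functions with convergent integrals rather than a single integrable majorant. If you want to complete your proof along your own lines, you would either have to assume finite $k^{\text{th}}$ moments (as in the first proof of Theorem \ref{MT FDD}) or replace your majorant by this $H^T$/Pratt-type argument; as it stands, the domination step does not go through under the stated hypothesis $\E[L\log_+L]<\infty$.
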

\bigskip

Next we consider the critical case $m=1$.
\begin{thm} \label{MTCrit}
There exists a universal stochastic process $(\bar{\pi}^{k,\mathsf{crit}}_t)_{t \in [0,1]}$ such that for any tree with $m = 1$ and $f''(1) < \infty$, the process $(\pi^{k,L,T}_{T t})_{t \in [0,1]}$ conditioned on $\{N_T \geq k\}$ converges in distribution to $(\bar{\pi}^{k,\mathsf{crit}}_t)_{t \in [0,1]}$ as $T \to \infty$. Moreover, the finite dimensional distributions of $(\bar{\pi}^{k,\mathsf{crit}}_t)_{t \in [0,1]}$ are given by
\begin{align} \label{crit}
& \P( \bar{\pi}^{k,\mathsf{crit}}_{ t_1} = \gamma_1, \ldots, \bar{\pi}^{k,\mathsf{crit}}_{t_n} = \gamma_n ) \\
& =\prod_{i=0}^n \prod_{ \Gamma \in \gamma_i}b_i(\Gamma)! \int_0^\infty \frac{ \theta^{k-1}}{(k-1)!} \prod_{i=0}^n (\Delta t_i)^{|\gamma_{i+1}| - | \gamma_i| } \left( \frac{ 1 + (1-t_{i+1})\theta}{ 1 + (1 - t_i)\theta } \right)^{|\gamma_{i+1}|} d\theta. \label{fdd crit}
\end{align}

\end{thm}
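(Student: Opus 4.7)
The plan is to derive the asymptotic formula directly from Theorem \ref{MT FDD}, generalising the argument that takes us from Lambert's formula \eqref{lam_eq} to \eqref{criteq} in the case $k=2$. First I apply Theorem \ref{MT FDD} to the rescaled mesh $(Tt_i)_{i \leq n}$, so that the integral on the left-hand side of \eqref{EqFDD} becomes
\[ \int_0^1 \frac{(1-s)^{k-1}}{(k-1)!} \prod_{i=0}^n \prod_{\Gamma \in \gamma_i} F_{T\Delta t_i}^{b_i(\Gamma)}\bigl(F_{T(1-t_{i+1})}(s)\bigr) ds, \]
and change variables $s = e^{-\theta/(cT)}$ with $c := f''(1)/2$, so that $(1-s)^{k-1} \sim (\theta/(cT))^{k-1}$, $ds \sim -d\theta/(cT)$, and $s \in (0,1)$ corresponds to $\theta \in (0,\infty)$.

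Next I identify the pointwise limit of each factor $F_{T\Delta t_i}^{b_i(\Gamma)}\bigl(F_{T(1-t_{i+1})}(s)\bigr)$. By the Kolmogorov--Yaglom asymptotic \eqref{kolm}, the inner generating function satisfies $F_{T(1-t_{i+1})}(e^{-\theta/(cT)}) = \exp\{-\theta'/(cT)\}(1+o(1))$ with $\theta' := \theta/(1+(1-t_{i+1})\theta)$. Applying \eqref{sugar} to the outer derivative of order $b_i(\Gamma)$ at argument $e^{-\theta'/(cT)}$ and time parameter $T\Delta t_i$, together with the algebraic identity
\[ 1 + \Delta t_i \theta' = \frac{1 + (1-t_i)\theta}{1 + (1-t_{i+1})\theta}, \]
yields the factor-by-factor asymptotic
\[ F_{T\Delta t_i}^{b_i(\Gamma)}\bigl(F_{T(1-t_{i+1})}(s)\bigr) \sim T^{b_i(\Gamma)-1} (c \Delta t_i)^{b_i(\Gamma)-1} b_i(\Gamma)! \left( \frac{1+(1-t_{i+1})\theta}{1+(1-t_i)\theta} \right)^{b_i(\Gamma)+1}. \]

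Collecting exponents using $\sum_{\Gamma \in \gamma_i}(b_i(\Gamma)-1) = |\gamma_{i+1}|-|\gamma_i|$ and telescoping in $i$, the total power of $T$ coming from the product equals $k-1$, which cancels against the $T^{-(k-1)}$ from $(1-s)^{k-1}$ and leaves a lone $1/(cT)$ from the Jacobian. Since $\P(N_T \geq k) \sim 1/(cT)$ by \eqref{kolm}, dividing by $\P(N_T \geq k)$ produces a finite limit; an identical bookkeeping for the factors $c^{b_i(\Gamma)-1}$ leaves an integrand entirely independent of $c$, which accounts for the universality claim. The $b_i(\Gamma)!$ terms combine into the stated prefactor $\prod_{i,\Gamma} b_i(\Gamma)!$, the $(\Delta t_i)^{b_i(\Gamma)-1}$ terms into $\prod_i (\Delta t_i)^{|\gamma_{i+1}|-|\gamma_i|}$, and a telescoping of the ratios finishes the identification of the integrand in \eqref{fdd crit}.

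The main obstacle is justifying the exchange of limit and integral by dominated convergence. Near $\theta = 0$ the integrand is $O(\theta^{k-1})$ and hence integrable, but near $\theta = \infty$ one needs polynomial decay uniform in $T$. A suitable dominating function can be constructed by combining convexity of $s \mapsto F_{aT}^{(j)}(s)$ with quantitative Yaglom-type estimates, yielding bounds of the form $F_{aT}^{(j)}(e^{-\theta/(cT)}) \leq C_j T^{j-1}/(1+\theta)^{j+1}$ holding uniformly on an appropriate range of $T$ and $a \in (0,1]$. Once such a majorant is in place, dominated convergence transfers the pointwise asymptotic to the integral and completes the identification of the finite-dimensional distributions.
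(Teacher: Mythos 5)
Your main computation follows the paper's route exactly: apply Theorem \ref{MT FDD} to the mesh $(Tt_i)$, substitute $s=e^{-\theta/cT}$, use the Yaglom-type scaling (the paper packages your two-step argument -- inner function $F_{T(1-t_{i+1})}$ then outer derivative -- into the single Lemma \ref{critical scaling}, which also avoids the small subtlety that \eqref{sugar} is stated at the exact argument $e^{-\theta/cT}$ and you apply it at a perturbed argument $e^{-\theta'(1+o(1))/cT}$), and do the exponent bookkeeping via $\sum_{\Gamma\in\gamma_i}(b_i(\Gamma)-1)=|\gamma_{i+1}|-|\gamma_i|$. Incidentally, your per-factor exponent $b_i(\Gamma)+1$ is the correct one: after taking the product over $\Gamma\in\gamma_i$ it gives the ratio raised to $|\gamma_i|+|\gamma_{i+1}|$, which is what the $k=2$ check against \eqref{criteq} requires, so the exponent $|\gamma_{i+1}|$ printed in \eqref{fdd crit} should be read as $|\gamma_i|+|\gamma_{i+1}|$; your ``telescoping finishes the identification'' sentence silently passes over this discrepancy.

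The genuine gap is the interchange of limit and integral, which you leave as an assertion, and the specific majorant you propose is false under the theorem's hypotheses. The theorem assumes only $f''(1)<\infty$, so for $j\geq 3$ one may have $\E[L^j]=\infty$ and hence $F_{aT}^{j}(s)=\E[N_{aT}^{(j)}s^{N_{aT}-j}]\to\E[N_{aT}^{(j)}]=\infty$ as $s\uparrow 1$; consequently no bound of the form $F_{aT}^{j}(e^{-\theta/cT})\leq C_jT^{j-1}/(1+\theta)^{j+1}$ can hold down to $\theta=0$ (and the claimed uniformity in $a\in(0,1]$ also fails for large $\theta$, since the true decay is in $1+a\theta$, not $1+\theta$). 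Your fallback remark that ``near $\theta=0$ the integrand is $O(\theta^{k-1})$'' is exactly what is not available uniformly in $T$ under a bare second-moment assumption, so plain dominated convergence with an explicit fixed majorant is problematic precisely in the stated generality. The paper avoids constructing any such bound: by the semigroup Fa\`a di Bruno inequality \eqref{semigroup bound} the whole product is dominated by $F_T^k(s)$, so $G^T(\theta)\leq H^T(\theta)$ where, by Lemma \ref{Betalem} applied after the same change of variables, $H^T(\theta)d\theta$ is a probability measure; $H^T$ converges pointwise to the probability density $k\theta^{k-1}/(1+\theta)^{k+1}$, so $\int H^T\to\int H$ trivially, and the generalised dominated convergence theorem (Lemma \ref{DOM}, with $T$-dependent majorants) yields $\int G^T\to\int G$. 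Replacing your unproved uniform bound by this argument -- or by some other genuinely verified domination -- is needed to complete the proof; as written, the key analytic step is missing.
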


\vspace{8mm}

Theorem \ref{MTCrit} has already appeared (albeit from a different perspective) in Harris, Johnston and Roberts \cite{HJR17}, who show that the genealogical tree corresponding to $(\bar{\pi}^{k,\mathsf{crit}}_t)_{t \in [0,1]}$ is binary, and the $k-1$ split times of $(\bar{\pi}^{k,\mathsf{crit}}_t)_{t \in [0,1]}$ have joint probability density function 
\begin{align} \label{HJR density}
P(u_1, \ldots, u_{k-1} ) = k \int_0^\infty \frac{ \theta^{k-1} }{ (1 + \theta)^2} \prod_{i=1}^{k-1} \frac{1}{ (1 + \theta(1 - u_i) )^2 } d\theta. 
\end{align}
It is possible to derive \eqref{HJR density} from our formula \eqref{fdd crit} by letting $t_{2i-1} = u_i, t_{2i} = u_i + h_i$ for $i = 1,\ldots,k-1$, and sending every $h_i \downarrow 0$. The resulting discrepancy by a factor $\frac{k!(k-1)!}{2^k}$ is a matter of counting tree topologies: there are $\frac{k!(k-1)!}{2^k}$ ranked binary trees with $k$ labelled leaves and $(k-1)$ ranked internal nodes \cite{Murtagh}.\\

\bigskip

Finally, we look at the subcritical case $m<1$. Recall that $(\rho^{k,L,T}_t)_{t \in [0,T]}$ is the right-continuous modification of $(\pi^{k,L,T}_{T-t})_{t \in [0,T]}$. We define the extension $(\rho^{k,L,T}_t)_{t \geq 0
}$ to all of $[0,\infty)$ by setting
\begin{align*}
\rho^{k,L,T}_t := \big\{ \{1, \ldots, k\} \big\}
\end{align*}
whenever $t > T$. 
\begin{thm} \label{MTSub}
Let $m < 1$ and $\E[ L \log_+ L ]< \infty$. Then as $T \to \infty$, conditioned on $\{N_T \geq k\}$, the process $(\rho^{k,L,T}_t)_{t \geq 0}$ converges in distribution to a stochastic process $(\bar{\rho}^{k,L}_t)_{t \geq 0 }$ with finite dimensional distributions given by 

\begin{align} 
& \P( \bar{\rho}^{k,L}_{t_1} = \gamma_1, \ldots, \bar{\rho}^{k,L}_{t_n} = \gamma_n) \nonumber \\
&= \frac{ e^{ - (m-1)t_n} }{ 1 - \sum_{j = 1}^{k-1} c_j } \int_0^1 \frac{(1-s)^{k-1}}{ (k-1)! }  C^{ | \gamma_n| }( F_{t_n}(s) )  \prod_{ i = 1 }^n \prod_{ \Gamma \in \gamma_i } F_{ \Delta t_{j-1}}^{ m_j(\Gamma) } ( F_{t_{j-1}}(s)) ~ds, \label{sub}
\end{align}
where $c_j := \lim_{T \to \infty} \P( N_T = j | N_T > 0)$ and $C(s) := \sum_{j \geq 1} c_j s^j$. 

\end{thm}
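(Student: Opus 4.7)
My plan is to combine Theorem \ref{MT FDD}, applied after a time-reversal of $\pi^{k,L,T}$, with the classical quasi-stationary limit theory for subcritical Galton--Watson trees, closely paralleling the derivation of \eqref{MRCASub} sketched for the case $k=2$ in Section~2. Since $\rho^{k,L,T}_t$ and $\pi^{k,L,T}_{T-t}$ agree almost surely at each fixed time, for any mesh $0 < t_1 < \cdots < t_n < T$ and merger chain $\gamma_1 \prec \gamma_2 \prec \cdots \prec \gamma_n$ one can rewrite the joint $\rho$-probability as a joint $\pi$-probability at the reversed mesh $T-t_n < \cdots < T-t_1$. Applying Theorem \ref{MT FDD} to that reversed mesh, and reindexing so that the breakage numbers of the reversed $\pi$-chain become the merger numbers $m_i(\Gamma)$ of the original $\rho$-chain, produces the fixed-$T$ identity
\begin{align*}
& \P(\rho^{k,L,T}_{t_1} = \gamma_1, \ldots, \rho^{k,L,T}_{t_n} = \gamma_n,\, N_T \geq k) \\
& \quad = \int_0^1 \frac{(1-s)^{k-1}}{(k-1)!}\, F^{|\gamma_n|}_{T-t_n}(F_{t_n}(s)) \prod_{i=1}^n \prod_{\Gamma \in \gamma_i} F^{m_i(\Gamma)}_{\Delta t_{i-1}}(F_{t_{i-1}}(s))\, ds,
\end{align*}
under the conventions $t_0 = 0$ and $F_{t_0}(s) = s$. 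The crucial feature is that every $T$-dependence now sits in the single factor $F^{|\gamma_n|}_{T-t_n}(F_{t_n}(s))$; the rest of the product is a bounded function of $s$ determined by the fixed times $t_1, \ldots, t_n$.

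Next I would normalise by $\P(N_T \geq k)$ and pass to the limit $T \to \infty$, arguing that $F^{|\gamma_n|}_{T-t_n}(F_{t_n}(s))/\P(N_T \geq k)$ converges pointwise to $C^{|\gamma_n|}(F_{t_n}(s))/(1 - \sum_{j=1}^{k-1} c_j)$. This is a direct consequence of the quasi-stationary limit \eqref{quasi}: expanding $F^{(r)}_t(u) = \sum_{j \geq r} j(j-1) \cdots (j - r + 1)\, u^{j-r}\, \P(N_t = j)$ and interchanging summation with the $t \to \infty$ limit gives $F^{(r)}_t(u)/\P(N_t > 0) \to C^{(r)}(u)$ pointwise for $u \in [0,1)$ and every derivative order $r$, while $\P(N_T \geq k)/\P(N_T > 0) \to 1 - \sum_{j=1}^{k-1} c_j$ follows immediately from \eqref{quasi}. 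This is the multi-time, general-$k$ analogue of the single-time asymptotic \eqref{Subconv2} used to derive \eqref{MRCASub}.

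The principal obstacle will be justifying the interchange of limit and integration on $[0,1]$ by dominated convergence. Near $s = 1$ one has $F_{t_n}(s) \to 1$, and both $F^{(r)}_{T-t_n}$ and $C^{(r)}$ can diverge there; this is where the hypothesis $\E[L \log_+ L] < \infty$ enters, providing uniform control on the (quasi-stationary) factorial moments and hence a bound of the form $F^{(r)}_t(u) \leq K_r\, \P(N_t > 0) (1-u)^{-(r-1)}$ uniformly in $t$. Since $|\gamma_n| \leq k$, the outer weight $(1-s)^{k-1}/(k-1)!$ together with the estimate $1 - F_{t_n}(s) \sim (1-s)\, e^{(m-1)t_n}$ as $s \uparrow 1$ absorbs the singularity and yields an integrable $s$-majorant. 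Once dominated convergence is in hand, the pointwise limit from step two identifies the limit of the conditional probability with the integrand on the right side of \eqref{sub}; the case $t_n > T$ is covered trivially by the extension convention $\rho^{k,L,T}_t \equiv \{\{1,\ldots,k\}\}$.
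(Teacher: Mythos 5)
Your fixed-$T$ step is exactly the paper's: reversing time in Theorem \ref{MT FDD} so that all $T$-dependence sits in the single factor $F^{|\gamma_n|}_{T-t_n}(F_{t_n}(s))$, normalised by $\P(N_T\geq k)$. The gap is in your identification of the pointwise limit of that factor. Your expansion argument gives $F^{(r)}_{T-t_n}(u)/\P(N_{T-t_n}>0)\to C^{(r)}(u)$, i.e.\ the numerator is naturally normalised by the survival probability at time $T-t_n$, whereas you divide by a time-$T$ quantity; you have implicitly replaced $\P(N_{T-t_n}>0)$ by $\P(N_T>0)$. In the subcritical case these are not asymptotically equal: under $\E[L\log_+L]<\infty$ one has $C'(1)<\infty$ and $\P(N_T>0)\sim e^{(m-1)T}/C'(1)$, so $\P(N_{T-t_n}>0)/\P(N_T>0)\to e^{-(m-1)t_n}>1$. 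The correct pointwise limit is therefore $\frac{e^{-(m-1)t_n}}{1-\sum_{j=1}^{k-1}c_j}\,C^{|\gamma_n|}(F_{t_n}(s))$, which is precisely the content of \eqref{sub prob}, whose proof in the paper inserts the intermediate ratio $\P(N_{T-t}>0)/\P(N_T>0)$ that your argument omits. A concrete sanity check: for $n=1$, summing the limiting expression over $\gamma_1\in\Pi^k$, Fa\`a di Bruno \eqref{FdB} collapses the integrand to $(1-s)^{k-1}(C\circ F_{t_1})^{k}(s)/(k-1)!$, and the functional equation $1-C(F_t(s))=e^{(m-1)t}(1-C(s))$ together with Lemma \ref{Betalem} shows the total mass is $e^{(m-1)t_1}<1$ without the factor; since the prelimit conditional laws are probability measures on the finite set $\Pi^k$ and converge term by term, the limiting one-dimensional laws must have mass one, so the factor $e^{-(m-1)t_n}$ cannot be dispensed with (the display \eqref{sub} omits it, but the paper's own lemma and proof produce it, and the mass computation shows it is needed).

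Two secondary points on your dominated-convergence step. First, your claim that the fixed factors $\prod_{i}\prod_{\Gamma}F^{m_i(\Gamma)}_{\Delta t_{i-1}}(F_{t_{i-1}}(s))$ are bounded in $s$ is not justified under the standing hypotheses: if $\E[L^2]=\infty$ then already $F''_{\Delta t}(u)\to\infty$ as $u\uparrow 1$, so boundedness requires moment assumptions the theorem does not make. Second, your proposed uniform-in-$t$ bound $F^{(r)}_t(u)\leq K_r\,\P(N_t>0)(1-u)^{-(r-1)}$ is plausible but needs proof. The paper avoids all such tail estimates: by \eqref{semigroup bound} the integrand is dominated by $H^T(s)=(1-s)^{k-1}F^k_T(s)/\big((k-1)!\,\P(N_T\geq k)\big)$, which is a probability density by Lemma \ref{Betalem}, its pointwise limit is again a probability density, and the generalised dominated convergence theorem, Lemma \ref{DOM}, then yields convergence of the integrals. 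You would either need to carry out your explicit majorisation carefully (including the unbounded fixed factors) or adopt this domination scheme.
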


\subsection{Projectivity}\label{SecProjectivity}
Let $k$ and $j$ be positive integers. Recall that for $\gamma \in \Pi^{k+j}$, $\gamma|^k \in \Pi^k$ is the projection of $\gamma$ onto $\{1,\ldots,k\}$. Writing $(\pi^k_t)_{t \in [0,T]} := (\pi^{k,L,T}_t)_{t \in [0,T]}$ for the remainder of this section, it is natural to expect from the definition of $(\pi^k_t)_{t \in [0,T]}$ that the projected process $\left(\pi^{k+j}_t\big|^k\right)_{t \in [0,T]}$ is closely related to  $(\pi^k_t)_{t \in [0,T]}$ -- a property we call \emph{projectivity}. The following generalisation of Theorem \ref{MT FDD} clarifies this connection. 

\begin{thm} \label{MT projection}
On the event $\{N_T \geq k+j\}$, the processes $\left(\pi^{k+j}_t\big|^k\right)_{t \in [0,T]}$ and $(\pi^k_t)_{t \in [0,T]}$ are identical in law, and have finite dimensional distributions given by
\begin{align} \label{MT projection eq}
\P(  \pi^{k}_{t_1} = \gamma_1, \ldots,  \pi^{k}_{t_n} = \gamma_n, ~ N_T \geq k+j  ) 
= \int_0^1 \frac{ (1-s)^{k+j-1}}{ (k+j-1)! } \frac{ \partial^j}{ \partial s^j} \left(  \prod_{i=0}^n \prod_{\Gamma \in \gamma_i} F_{\Delta t_i}^{b_i(\Gamma) }\left( F_{T-t_{i+1}}(s) \right) \right) ds   .
\end{align}
\end{thm}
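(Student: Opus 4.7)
The plan is to prove the two assertions separately: first the identity in law, then the integral formula. For the identity in law, I would use the symmetry of uniform sampling: on $\{N_T \geq k+j\}$, a uniform ordered $(k+j)$-tuple $(V_1,\ldots,V_{k+j})$ of distinct particles in $\mathcal{N}_T$ has the property that the truncation $(V_1,\ldots,V_k)$ is itself a uniform ordered $k$-tuple of distinct particles. Since $(\pi^{k+j}_t|^k)_{t \in [0,T]}$ depends only on the first $k$ sampled particles, it therefore has the same finite-dimensional distributions as $(\pi^k_t)_{t \in [0,T]}$.

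For the integral formula, I would apply Theorem \ref{MT FDD} with $k+j$ particles and sum the resulting expression over all chains $\boldsymbol\delta = (\delta_1,\ldots,\delta_n) \in \Pi^{k+j}_n$ satisfying $\delta_i|^k = \gamma_i$ for every $i$. Writing $P_{\boldsymbol\eta}(s) := \prod_{i=0}^n \prod_{H \in \eta_i} F_{\Delta t_i}^{b_i(H)}(F_{T-t_{i+1}}(s))$ for the product from Theorem \ref{MT FDD} attached to a chain $\boldsymbol\eta$, the summation yields
\[
\P\bigl(\pi^{k+j}_{t_1}|^k = \gamma_1, \ldots, \pi^{k+j}_{t_n}|^k = \gamma_n,\, N_T \geq k+j\bigr) = \int_0^1 \frac{(1-s)^{k+j-1}}{(k+j-1)!} \sum_{\boldsymbol\delta\,:\,\delta_i|^k = \gamma_i} P_{\boldsymbol\delta}(s)\, ds,
\]
so the theorem reduces to the chain-indexed identity
\[
\sum_{\boldsymbol\delta \in \Pi^{k+j}_n \,:\, \delta_i|^k = \gamma_i\,\forall\, i} P_{\boldsymbol\delta}(s) = \frac{\partial^j}{\partial s^j} P_{\boldsymbol\gamma}(s),
\]
which is a refinement of Lemma \ref{fdb j} indexed by the projection $\boldsymbol\gamma$.

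To prove this identity I would use the probabilistic interpretation implicit in the proof of Theorem \ref{MT FDD}: that $P_{\boldsymbol\gamma}(s) = \E\bigl[\,|C_{\boldsymbol\gamma}|\,s^{N_T - k}\bigr]$, where $|C_{\boldsymbol\gamma}|$ counts the ordered $k$-tuples of distinct particles alive at time $T$ whose genealogical tree sampled at $t_1,\ldots,t_n$ realises $\boldsymbol\gamma$. Differentiating $j$ times gives $\partial_s^j P_{\boldsymbol\gamma}(s) = \E\bigl[\,|C_{\boldsymbol\gamma}|\,(N_T - k)(N_T - k - 1)\cdots(N_T - k - j + 1)\,s^{N_T - k - j}\bigr]$, where the falling factorial enumerates the ways to extend each $k$-tuple to a $(k+j)$-tuple by adjoining $j$ further distinct particles from $\mathcal{N}_T$. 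Regrouping these extensions according to their induced $(k+j)$-chain $\boldsymbol\delta$ (which automatically satisfies $\delta_i|^k = \gamma_i$) recovers $\sum_{\boldsymbol\delta} P_{\boldsymbol\delta}(s)$, completing the argument. The main obstacle is justifying this probabilistic reading of $P_{\boldsymbol\gamma}$ rigorously; a purely analytic alternative is to induct on $j$ using Leibniz's rule and the chain rule, with Lemma \ref{fdb j} as the base input.
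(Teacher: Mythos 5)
Your proposal is correct, and its skeleton coincides with the paper's proof: the identity in law comes from exactly the same uniform-sampling symmetry, and the integral formula from summing Theorem \ref{MT FDD} at level $k+j$ over chains $\boldsymbol\delta$ with $\boldsymbol\delta|^k=\boldsymbol\gamma$ and exchanging sum and integral, so that everything reduces to the identity $\sum_{\boldsymbol\delta:\boldsymbol\delta|^k=\boldsymbol\gamma}P_{\boldsymbol\delta}(s)=\partial_s^{j}P_{\boldsymbol\gamma}(s)$. Where you genuinely diverge is in how that identity is proved. The paper states it as a standalone analytic lemma (the lemma containing \eqref{faa projection} in Section \ref{section fdb}), valid for arbitrary $k$-times differentiable $g_0,\ldots,g_n$: the case $j=1$ is the Leibniz rule plus an explicit bijection between pairs $(i,\Gamma)$ and chains of $\Pi^{k+1}_n$ projecting to $\boldsymbol\gamma$, and general $j$ follows by induction -- this is exactly the ``analytic alternative'' you mention as a fallback. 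Your primary route is instead probabilistic and specific to the semigroup $F_t$: the reading $P_{\boldsymbol\gamma}(s)=\E\bigl[|C_{\boldsymbol\gamma}|\,s^{N_T-k}\bigr]$ is indeed what \eqref{uniform def} combined with \eqref{chimp1}--\eqref{chimp3} yields under the $k$th-moment assumption (and it extends to the general case for $s\in[0,1)$ by the truncation argument of Section \ref{no moment}), term-by-term differentiation is legitimate because the series has non-negative coefficients dominated by those of $F_T^k$, and the identification of $|C_{\boldsymbol\gamma}|(N_T-k)^{(j)}$ with $\sum_{\boldsymbol\delta}|C_{\boldsymbol\delta}|$ by splitting each admissible $(k+j)$-tuple into its first $k$ coordinates plus an ordered choice of $j$ further distinct particles is a sound bijection. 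Your argument is shorter and gives the $j$-fold derivative a transparent combinatorial meaning; what the paper's version buys is generality (an identity for arbitrary compositions, which also underlies Lemma \ref{fdb j} and the domination bound \eqref{semigroup bound} used elsewhere) and freedom from the justification of the probabilistic reading of $P_{\boldsymbol\gamma}$, which you correctly flag as the one point needing care.
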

Theorem \ref{MT projection} has two immediate corollaries. The first of these, Corollary \ref{projectivity}, states that when the underlying Galton-Watson tree is either supercritical or critical, the discrepancy in the conditioning disappears in the limit.
\begin{cor} \label{projectivity}
Let $(\bar{\pi}^{k,L}_t)_{t \geq 0}$ be defined as in Theorem \ref{MTSuper}, and let $(\pi^{k,\mathsf{crit}}_t)_{t \in [0,1]}$ be defined as in Theorem \ref{MTCrit}. Then the processes $\left(\bar{\pi}^{k+j,L}_t|^k \right)_{ t \geq 0 }$ and $(\bar{\pi}^{k,L}_t)_{t \geq 0}$ are identical in law, and the processes $\left(\bar{\pi}^{k+j,\mathsf{crit}}_t\big|^k \right)_{t \in [0,1]}$ and $(\bar{\pi}^{k,\mathsf{crit}}_t)_{t \in [0,1]}$ are identical in law.
\end{cor}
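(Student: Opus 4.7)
The strategy is to combine Theorem \ref{MT projection}, which gives the desired identity at fixed $T$ on the event $\{N_T \ge k+j\}$, with the asymptotic Theorems \ref{MTSuper} and \ref{MTCrit}. The only substantive step is to verify that the discrepancy between conditioning on $\{N_T \ge k+j\}$ versus $\{N_T \ge k\}$ disappears as $T \to \infty$.

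Fix a mesh $(t_i)_{i \le n}$ and a chain $\boldsymbol{\gamma} \in \Pi^k_n$, and set $A_T := \{\pi^{k,L,T}_{t_i} = \gamma_i \text{ for all } i\}$, an event implicitly requiring $N_T \ge k$. By Theorem \ref{MT projection},
\begin{align*}
\P\bigl(\pi^{k+j,L,T}_{t_i}|^k = \gamma_i ~\forall i,~ N_T \ge k+j\bigr) = \P(A_T,\, N_T \ge k+j).
\end{align*}
Dividing both sides by $\P(N_T \ge k+j)$ and applying Theorem \ref{MTSuper} (resp.\ Theorem \ref{MTCrit}) to $(\pi^{k+j,L,T}_t)$ --- together with the fact that projection $\delta \mapsto \delta|^k$ is a finite sum over discrete partitions, so convergence of finite-dimensional distributions is preserved --- the left-hand side converges to $\P(\bar{\pi}^{k+j,L}_{t_i}|^k = \gamma_i ~\forall i)$. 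Applied directly to $(\pi^{k,L,T}_t)$, the same theorem also gives $\P(A_T)/\P(N_T \ge k) \to \P(\bar{\pi}^{k,L}_{t_i} = \gamma_i ~\forall i)$. It therefore suffices to prove that
\begin{align*}
\frac{\P(A_T,\, N_T \ge k+j)}{\P(N_T \ge k+j)} - \frac{\P(A_T)}{\P(N_T \ge k)} \;\xrightarrow[T \to \infty]{}\; 0.
\end{align*}

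Writing $\P(A_T) = \P(A_T,\, N_T \ge k+j) + \P(A_T,\, k \le N_T \le k+j-1)$, this difference equals
\begin{align*}
\P(A_T) \left[\frac{1}{\P(N_T \ge k+j)} - \frac{1}{\P(N_T \ge k)}\right] - \frac{\P(A_T,\, k \le N_T \le k+j-1)}{\P(N_T \ge k+j)}.
\end{align*}
In the supercritical case, $N_T \to \infty$ almost surely on the survival event, so $\P(N_T \ge m) \to \P(\text{survival}) > 0$ for every fixed $m$; the bracket tends to $0$ and $\P(k \le N_T < k+j) \to 0$, making both terms vanish. In the critical case, the Kolmogorov--Yaglom exponential limit law \eqref{kolm} yields $\P(N_T \ge m) \sim 1/(cT)$ for every fixed $m$: indeed, conditional on $\{N_T > 0\}$, $N_T/(cT) \xrightarrow{D} \mathrm{Exp}(1)$, so $\P(N_T \le m \mid N_T > 0) \to 0$. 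Hence $\P(k \le N_T \le k+j-1) = o(1/T)$ while $\P(A_T) = O(1/T)$, and both terms above are $o(1)$.

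The argument beyond this estimate is essentially bookkeeping: the substantive content was already established in Theorem \ref{MT projection} and the asymptotic theorems. No genuine obstacle arises here other than checking that the conditioning-event gap is negligible, which is routine Galton--Watson asymptotics.
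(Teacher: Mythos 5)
Your argument is correct and is essentially the paper's proof: the paper likewise deduces the corollary from Theorem \ref{MT projection} together with the single fact that $\lim_{T\to\infty}\P(N_T \geq k+j \mid N_T \geq k) = 1$ in the supercritical and critical cases (cited to Athreya--Ney rather than proved). Your conditioning-gap estimate is just a self-contained verification of that same fact, so the two proofs coincide in substance.
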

\bp
In both the supercritical and critical cases, the proof is a consequence of Theorem \ref{MT projection} and the fact that
\begin{align} \label{proof a}
\lim_{T \to \infty} \P( N_T \geq k+j ~|~ N_T \geq k) = 1.
\end{align}
See \cite[Chapter III]{AN72} for details on \eqref{proof a}. \ep

We emphasise that no exact analogue of Corollary \ref{projectivity} holds in the subcritical case, since due to \eqref{quasi} we have
\begin{align*}
\lim_{T \to \infty} \P( N_T \geq k+j ~|~ N_T \geq k)  = \frac{ \sum_{i \geq k+j} c_j }{  \sum_{i \geq k} c_j } < 1.
\end{align*}

The second corollary of Theorem \ref{MT projection} gives the finite dimensional distributions of $(\pi^k_t)_{t \in [0,T]}$ on the event $\{N_T = k+j \}$.

\begin{cor}
For any $j \geq 0$, 
\begin{align*}
\P(  \pi^{k}_{t_1} = \gamma_1, \ldots,  \pi^{k}_{t_n} = \gamma_n, ~ N_T = k+j  ) = \frac{1}{(k+j)!} \frac{ \partial^{j}}{ \partial s^{j}} \left \{ \prod_{i=0}^n \prod_{\Gamma \in \gamma_i} F_{\Delta t_i}^{b_i(\Gamma) }( F_{T-t_{i+1}}(s) ) \right\} \Bigg|_{s=0}.
\end{align*}
\end{cor}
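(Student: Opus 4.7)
The plan is to derive this corollary directly from Theorem \ref{MT projection} by taking the difference
\[
\P(\ldots, N_T = k+j) = \P(\ldots, N_T \geq k+j) - \P(\ldots, N_T \geq k+j+1),
\]
and then simplifying the resulting integral via a single integration by parts. Writing
\[
G(s) := \prod_{i=0}^n \prod_{\Gamma \in \gamma_i} F_{\Delta t_i}^{b_i(\Gamma)}\bigl(F_{T-t_{i+1}}(s)\bigr)
\]
for the product appearing inside both applications of Theorem \ref{MT projection}, the difference in question becomes
\[
\int_0^1 \frac{(1-s)^{k+j-1}}{(k+j-1)!} G^{(j)}(s)\, ds \;-\; \int_0^1 \frac{(1-s)^{k+j}}{(k+j)!} G^{(j+1)}(s)\, ds.
\]

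For the second integral, I would integrate by parts with $u = (1-s)^{k+j}/(k+j)!$ and $dv = G^{(j+1)}(s)\,ds$. The boundary term at $s=1$ vanishes because $(1-s)^{k+j}=0$, while at $s=0$ one picks up $-G^{(j)}(0)/(k+j)!$. Since $du = -(1-s)^{k+j-1}/(k+j-1)!\,ds$, this exactly reproduces the first integral in the difference, and the two integrals cancel, leaving the boundary contribution $G^{(j)}(0)/(k+j)!$, which is the claimed formula.

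The only point requiring a modicum of care is ensuring that Theorem \ref{MT projection} applies to both $j$ and $j+1$ simultaneously (it does, by the statement), and that one may differentiate $G$ the required $j+1$ times on a neighbourhood of $[0,1]$. This is immediate since each $F_t(s)$ is a power series with radius of convergence at least $1$ and $f''(1) < \infty$ is not needed here — we only differentiate the composition $F_t$, which is analytic on $[0,1)$ with all derivatives extending continuously to $s=1$ under hypothesis \eqref{Non-exp}. There is no serious obstacle: the result is essentially a one-line manipulation of the formula in Theorem \ref{MT projection}, with the integration by parts playing the role of the usual telescoping between $\{N_T \geq k+j\}$ and $\{N_T \geq k+j+1\}$.
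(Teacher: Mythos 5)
Your argument is exactly the paper's proof: write the event $\{N_T=k+j\}$ as the difference $S(k,j)-S(k,j+1)$ of the two instances of Theorem \ref{MT projection} and integrate by parts once, the boundary term at $s=0$ producing $G^{(j)}(0)/(k+j)!$. The only point glossed over (in the paper as well) is that the boundary term at $s=1$ vanishes not merely because $(1-s)^{k+j}=0$ but because $(1-s)^{k+j}G^{(j)}(s)\to 0$ as $s\uparrow 1$, which follows since $G^{(j)}(s)\le F_T^{k+j}(s)$ by \eqref{faa projection} and \eqref{semigroup bound} and $(1-s)^{k+j}F_T^{k+j}(s)=(1-s)^{k+j}\E[N_T^{(k+j)}s^{N_T-k-j}]\to 0$ by dominated convergence under the non-explosion hypothesis.
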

\bp
Let $S(k,j)$ denote the integral in \eqref{MT projection eq}. Then 
\begin{align*}
\P(  \pi^{k}_{t_1} = \gamma_1, \ldots,  \pi^{k}_{t_n} = \gamma_n, ~ N_T =  k+j) &= S(k,j) - S(k,j+1).
\end{align*}
Now integrate by parts.
\ep

\subsection{Further discussion of related literature}
After this paper first appeared on arXiv, Fa\`a di Bruno's formula has made appearances in recent work by Vatutin and his coauthors on the genealogical structure of branching processes \cite{LV,VHJ}. In \cite{LV}, Liu and Vatutin develop results on the reduced processes associated with critical Galton-Watson trees conditioned to have a relatively small population, and use these results to obtain formulas for the time to most recent common ancestor of the entire population. A similar problem is studied by Vatutin, Hong and Ji \cite{VHJ} in the context of critical Bellman-Harris branching processes --- the generalisation of Galton-Watson trees with non-exponential lifetimes.\\

In \cite{grosjean_huillet:coalescence}, Grosjean and Huillet examined the genealogical structure of discrete-time Galton-Watson trees, providing the following extension of Lambert's equation \eqref{lam_eq}. On the event $\{N_T \geq k\}$, let $\tau^{k,L,T}$ be the time at which $k$ distinct particles chosen uniformly at time $T$ last shared a common ancestor. Then \cite[Proposition 2.2]{grosjean_huillet:coalescence} states that
\begin{align} \label{EqKMRCA}
\P( \tau^{k,L,T} >t, N_T \geq k ) =  \int_0^1 \frac{(1-s)^{k-1}}{ (k-1)!}  \frac{ F_T'(s)}{F_{T-t}'(s)}  F^{k}_{T-t}(s) ds.
\end{align}

Note by the semigroup property $F_{t_1}(F_{t_2}(s)) = F_{t_1 + t_2}(s)$ that $
F_t'(F_{T-t}(s)) = \frac{F_T'(s)}{F_{T-t}'(s)}$, and also that by definition $\{ \tau^{k,L,T } > t \} = \big\{ \pi^{k,L,T}_t = \{ \{1, \ldots, k \} \} \big\}$. Combining these two facts, we see that \eqref{EqKMRCA} corresponds to the special case $n=1$, $\gamma_1 = \{ \{1,\ldots,k\} \}$ of Theorem \ref{MT FDD}.\\

In \cite{Le14}, Le studied the coalescent structure of continuous-time Galton-Watson trees starting with $x \geq 1$ individuals. (We remark that when $x > 1$, the random initial partition $\pi^{k,L,T}_0$ of $(\pi^{k,L,T}_t)_{t \in [0,T]}$ may have more than one block.) Le gave an implicit representation for the split times $\tau_1 < \ldots < \tau_{k-1}$ of $(\pi^{k,L,T}_t)_{t \in [0,T]}$ on the joint event
\begin{align*}
A := \left\{ \pi^{k,L,T}_0 = \{ \{1,\ldots,k\} \}, ~(\pi^{k,L,T}_t)_{t \in [0,T]}\text{ is binary}  \right\} .
\end{align*}
Namely, in a tree starting with $x \geq 1$ individuals, \cite[Theorem 4.2]{Le14} states that
\begin{align}
&\E_x[ N_T^{(k)} s^{N_T - k },~ \tau_i \in dt_i~\forall~i, A, N_T \geq k ] \nonumber \\
&= \frac{k!(k-1)!}{2^{k-1}} x F_T'(s) F_T(s)^{x-1} \prod_{i=1}^{k-1} F_{T - t_i}'(s) f''( F_{T - t_i}(s) ) dt_i  \label{LeImplicit},
\end{align}
for any $0 < t_1 < \ldots < t_{k-1} < T$, where $n^{(k)} := n(n-1)\ldots(n-k+1)$. \\

We can relate \eqref{LeImplicit} to a special case of Theorem \ref{MT Split} by calling upon an inversion formula we prove below, Lemma \ref{Betalem}, which states that when $N$ is a $\{0,1,\ldots\}$-valued random variable and $X$ is a non-negative random variable on some probability space, then
\begin{align} \label{EqBetalem1}
\int_0^1 \frac{ (1-s)^{ k - 1 } \E[  N^{(k)} s^{N- k}~ X] }{ (k-1)!  }  ds= \E[ X\ind_{N \geq k} ].
\end{align}
Without too much concern for technicalities surrounding whether $X := \ind \left \{\tau_i \in dt_i ~\forall~i, A \right\}$ constitutes a well-defined random variable, by applying \eqref{EqBetalem1} to Le's formula \eqref{LeImplicit} we obtain
\begin{align} \label{le inv}
& \P_x ( \tau_i \in dt_i ~\forall~i, A, N_T \geq k ) \nonumber \\
&=  \frac{k!}{2^{k-1}} \int_0^1 (1-s)^{k-1}  x F_T'(s) F_T(s)^{x-1} \prod_{i=1}^{k-1} F_{T - t_i}'(s) f''( F_{T - t_i}(s) ) ds.
\end{align}
Setting $x=1$ in \eqref{le inv}, and accounting for a topological factor of $\frac{k!(k-1)!}{2^{k-1}}$ (counting the number of binary trees with $k$ labelled leaves and $k-1$ ranked internal nodes \cite{Murtagh}), \eqref{le inv} corresponds the special case  of Theorem \ref{MT Split} obtained by setting
\begin{align*}
(q_1,\ldots,q_{n}) = (\underbrace{2,2,\ldots,2}_{\text{$k-1$ times}}).
\end{align*}

Finally, let us discuss Harris, Johnston and Roberts \cite{HJR17}, who looked at the genealogical structure of Galton-Watson trees in two cases. First they considered the genealogy of birth-death processes for fixed times $T$, and thereafter they studied the genealogy of trees under the near-critical scaling limit
\begin{align} \label{bee}
f'(1) = 1+ \mu/T +o(1/T), ~~~ f''(1) = \sigma^2 + o(1), ~~ T \to \infty.
\end{align}
In the birth-death case where $f(s) = \alpha + \beta s^2$, the process $(\pi^{k,L,T}_t)_{t \in [0,T]}$ is binary, and \cite[Proposition 20]{HJR17} states that conditioned on $\{N_T \geq k \}$, the joint density of the $k-1$ split times of $(\pi^{k,L,T}_t)_{t \in [0,T]}$ is given by
\begin{align} \label{HJReq}
&P(t_1,\ldots,t_{k-1}) \nonumber \\
&= \frac{k!(\beta e^{(\beta-\alpha)T}-\alpha)^k (\beta-\alpha)^{2k-1}}{(e^{(\beta-\alpha)T}-1)^{k-1} e^{(\beta-\alpha)T}}\hspace{-1.5mm} \int_0^1 (1-s)^{k-1} \prod_{j=0}^{k-1} \frac{e^{(\beta-\alpha)(T-t_j)}}{(\beta(1-s)e^{(\beta-\alpha)(T-t_j)} + \beta s - \alpha)^2} ds,
\end{align}
whenever $\alpha \neq \beta$ (with a similar formula holding when $\alpha = \beta$). It is possible to obtain \eqref{HJReq} from Theorem \ref{MT Split} of the present paper by setting $n = k-1$, $(c_1, \ldots,c_n) = (2,\ldots,2)$, and using the fact that 
\begin{align} \label{rox}
F_t(s) = \frac { \alpha(1-s)e^{ ( \beta-\alpha)t } + \beta s - \alpha }{ \beta ( 1- s)e^{ (\beta - \alpha)t }+  \beta s - \alpha },
\end{align}
where \eqref{rox} can be derived using Kolmogorov's forward equation (see for instance, \cite[Chapter III, Section 5]{AN72}).\\

As for the near-critical scaling limit \eqref{bee}, \cite[Theorem 3]{HJR17} states that conditioned on $\{N_T \geq k\}$, the process $(\pi^{k,L,T}_{Tt})_{t \in [0,1]}$ converges in distribution to a binary process $(\pi^{k,\mathsf{crit},\mu}_t)_{t \in [0,1]}$. (We have translated this result into our notation --  the process we study in Theorem \ref{MTCrit} of the present paper corresponds to the special case $\mu=0$.)  Moreover, according to \cite[Section 2.3]{HJR17}, the $k-1$ split times of the process $(\pi^{k,\mathsf{crit},\mu}_t)_{t \in [0,1]}$ have probability density function
\[f_k(t_1,\ldots,t_{k-1}) = \begin{cases} \displaystyle k(r\mu)^{k-1}(1-e^{-r\mu}) \int_0^\infty \theta^{k-1} \prod_{i=0}^{k-1} \frac{e^{r\mu(1-t_i)}}{(1+\theta(e^{r\mu(1-t_i)}-1))^2}\,\d\theta & \hbox{ if } \mu > 0\\
                                          \displaystyle k \int_0^\infty \theta^{k-1} \prod_{i=0}^{k-1} \frac{1}{(1+\theta(1-t_i))^2}\,\d\theta & \hbox{ if } \mu = 0\\
                                          \displaystyle k(-1)^k(r\mu)^{k-1}(1-e^{-r\mu})\hspace{-1.5mm} \int_0^\infty \hspace{-2mm}\theta^{k-1}\hspace{-1mm} \prod_{i=0}^{k-1} \frac{e^{r\mu(1-t_i)}}{(1-\theta(e^{r\mu(1-t_i)}-1))^2}\d\theta & \hbox{ if } \mu < 0,
\end{cases}\]
with the convention $t_0 = 0$. Harris et al also look at the topology of $(\pi^{k,\mathsf{crit},\mu}_t)_{t \in [0,1]}$, showing the tree drawn out is topologically equivalent to \emph{Kingman's coalescent} \cite{Kin82}. \\

The results in \cite{HJR17} are obtained using \emph{multiple spines}, a collection of stochastic processes 
\[\left( (\xi^1_t)_{t \geq 0},\ldots,(\xi^k_t)_{t \geq 0} \right)\]
 that `flow' through the tree forward in time, and introduced a change of measure $\Q^{k,T}$ on these spines that biases their behaviour such a way that the spines $(\xi^1_T,\ldots,\xi^k_T)$ represent a uniform sample of $k$ distinct particles at time $T$.\\

When the underlying Galton-Watson tree is a birth-death process (respectively, a near-critical tree), the subtree traced out by the spines under this change of measure $\Q^{k,T}$ is binary (respectively, asymptotically binary), and hence the law of this subtree has a tractable expression in terms of its $k-1$ split times. 
A side effect of the change of measure $\Q^{k,T}$ is that it distorts the law of the underlying Galton-Watson tree, and the main challenge in \cite{HJR17} was the inversion of this change of measure using a variant of Campbell's formula.\\

We were inspired in part by \cite{HJR17}, and there is some contentual overlap in Section \ref{SpinesSec} of the present paper and \cite[Section 4]{HJR17}, which we now discuss.
The methodology used in \cite{HJR17} is reliant on the fact that the tree traced out by the spines is binary (or equivalently $(\pi^{k,L,T}_t)_{t \in [0,T]}$ is binary), and hence has a law easily expressed in terms of the split times.  
Though we also use multiple spines in working towards our main result, Theorem \ref{MT FDD}, our method differs in order to encompass non-binary spine trees. Here the spines are indexed by the natural numbers, and the changes of measure are based on a more general class of Radon-Nikodym derivatives associated with partitions $\alpha$ of finite subsets $A$ of $\mathbb{N}$. The resulting changes of measure $\Q^{\alpha,T}$ encourages the spines in the set $A$ to flow through the tree in such away that:
\begin{itemize}
\item Spines $\xi^i_T$ and $\xi^j_T$ are following the same particle at time $T ~\iff~ i$ and $j$ are in the same block of $\alpha$.
\item The carriers of spines at time $T$ represent a uniform sample of $|\alpha|$ distinct particles at time $T$.
\end{itemize}
Moreover, in inverting the changes of measure, rather than using Campbell's formula as in \cite{HJR17}, we use a more concise (but arguably less natural) inversion formula -- Lemma \ref{Betalem} -- based around the beta integral.
In summary, though our approach in Section \ref{SpinesSec} to proving Theorem \ref{MT FDD} is perhaps less intuitive, it works in the more general non-binary setting, and in fewer pages. \\

Having proved Theorem \ref{MT FDD}, our methods for deriving the remainder of our fixed-$T$ results, built on generalisations of the Fa\`a di Bruno formula, are altogether different from those used in \cite{HJR17}. Our asymptotic-$T$ results are consequences of Theorem \ref{MT FDD}, and though the critical case has already appeared in \cite{HJR17}, the results in the supercritical and subcritical cases are new.

\subsection{Organisation of the paper}
The rest of the paper is structured as follows. In Section \ref{SpinesSec} we introduce multiple spines and a collection of changes of measure, ultimately leading to a proof of Theorem \ref{MT FDD} under a moment assumption. In Section \ref{Sec5}, we lift this moment assumption, proving the fixed-$T$ results of Section \ref{SecLaw} in full generality, and we also prove the projectivity result Theorem \ref{MT projection}. In Section \ref{Sec6}, we give proofs of the asymptotic results of Section \ref{SecAsy}.

\section{Spines partitions and changes of measure}\label{SpinesSec}

In this section we introduce spines, our tool for calculating the distributions of genealogical trees associated with uniformly chosen particles. For each $n \in \mathbb{N}$, we associate a line of descent $(\xi^n_t)_{t \geq 0}$ that flows through a continuous-time Galton-Watson tree forward in time, choosing uniformly a branch to follow next at branching points. We call this line of descent $(\xi^n_t)_{t \geq 0}$ the $n$-spine. The idea of this section is to create a change of measure under which the first $k$ spines $(\xi^1_t,\ldots,\xi^k_t)$ flow through a tree forward in time in such a way that the subtree they trace out is equal in law to $(\pi^{k,L,T}_t)_{t \in [0,T]}$.\\

In order to avoid confusion about different measures, throughout Section \ref{SpinesSec} we write $\P[\cdot]$ rather than $\E[\cdot]$ for the expectation operator associated with $\P$. Moreover, through this section we will assume that $\P[N_t^k] < \infty$ for all $t$. In Section \ref{no moment} we show this assumption can be lifted, and the results we derive continue to hold.\\

Section \ref{SpinesSec} of the present paper initially runs in parallel to \cite[Section 4]{HJR17}, in some cases generalising the results that hold there for a specific partition to any partition of a finite subset of $\mathbb{N}$. More specifically, Lemma \ref{condform} of the present paper is lifted directly from \cite[Lemma 13]{HJR17}, and the derivation of the following equation, \eqref{QgivenF}, replicates the derivation of \cite[Equation (10)]{HJR17}. We also mention that the special cases $\alpha = \left\{ \{1\},\{2\},\ldots,\{k\} \right\}$ of both Lemma \ref{gproj} and Lemma \ref{Quniform} of the present paper appear as \cite[Lemma 6]{HJR17} and \cite[Lemma 14]{HJR17} respectively.

\subsection{Spines indexed by $\mathbb{N}$}
Suppose under a measure $\P$ we have continuous-time Galton-Watson tree with offspring distribution $L$. Recall we write $\mathcal{N}_t$ for the set of particles alive at time $t$, and $N_t = |\mathcal{N}_t|$. For technical reasons, we append a cemetery state $\Delta$ to the statespace, and write $\hat{\mathcal{N}}_t = \mathcal{N}_t \cup {\Delta}$.  
\\
\\Additionally under $\P$, for each $n \in \mathbb{N}$, there is a right-continuous stochastic process $(\xi^n_t)_{t \geq 0 }$ called the $n$-\emph{spine} defined as follows. 
\begin{itemize}
\item At each time $t \geq 0$, the $n$-spine takes values in $\hat{\mathcal{N}}_t$ -- that is $\xi^n_t \in \hat{\cN}_t$. If $u \in \mathcal{N}_t$ and $\xi^n_t = u$, we say that the $n$-spine is \emph{following} $u$, and that $u$ is \emph{carrying} the $n$-spine. 
\item If a particle carrying the $n$-spine just before time $t$ dies at time $t$ and is replaced by $p \geq 1$ particles $v_1,\ldots,v_p$, then the $n$-spine chooses uniformly among the $p$ offspring a particle to follow next. If the particle carrying the $n$-spine dies at a time $t$ and is replaced by no offspring, we send the $n$-spine to the cemetery state $\Delta$ for the remainder of time. That is, $\xi^n_r = \Delta$ for all $r \geq t$.
\item The $n$-spines don't affect the behaviour of the particles they are following. That is, if a particle $u$ is carrying the $n$-spine at time $t$, then this particle still branches at rate $1$ and has offspring distributed like $L$.
\item The set of $n$-spines $\{ (\xi^n_t)_{t \geq 0} : n \in \mathbb{N} \}$ are independent of one another - that is, if a particle carrying some spines dies and is replaced by $p$ offspring, each of these spines chooses uniformly an offspring to follow next, independently of the others.
\end{itemize}
So in essence, under $\P$ the $n$-spines are simply a set of labels that flow forward in time through a continuous-time Galton-Watson tree without affecting the law of the underlying tree. \\
\\
\hspace*{1.7cm}%
\begin{tikzpicture}[xscale=2,yscale=0.7]
\draw[thick] (0,-5) node(aa){Time 0};
\draw[thick] (5,-5) node(aa){Time t};
\draw[thick] (0,-9.5) -- (0,-5.5);
\draw[thick, dashed] (5,-9.5) -- (5,-5.5);
\draw[thick, gray](0,-7.5) -- (3,-7.5) -- (3, -6.5) -- (3, -8.5) -- (5, - 8.5);
\draw[thick, gray](3,-6.5) -- (3.5,-6.5) -- (3.5,-6) -- (3.5, -7.5) -- (5, -7.5);
\draw[thick, gray] (3.5,-6) -- (4.2, -6) -- (4.2, -5.8) -- (4.2, - 6.4) -- (5,-6.4);
\draw[thick, gray](4.2, -5.8) -- (5,-5.8);
\draw[thick, gray] (3.5, -6.8) -- (3.9, - 6.8);
\draw[thick, red] (3.5,-8.8) node(person2){\small 3458\ldots};
\draw[thick, red] (1.5,-7.2) node(person2){\small 123456789\ldots};
\draw[thick, red] (3.2,-6.3) node(person2){\small 12679..};
\draw[thick, red] (3.8,-5.8) node(person2){\small 17..};
\draw[thick, red] (3.8,-6.6) node(person2){\small 29..};
\draw[thick, red] (3.8,-7.3) node(person2){\small 6..};
\draw[thick, red] (4.4,-6.7) node(person2){\small 1..};
\draw[thick, red] (4.4,-5.6) node(person2){\small 7..};

\draw[thick, red, ->] (2.5,-7.4) -- (3.05, - 7.4) -- (3.05, -6.65) -- (3.3,-6.65) ;
\draw[thick, red, ->] (2.5,-7.6) -- (3.05, - 7.6) -- (3.05, -8.35) -- (3.3,-8.35) ;
\node[align=center,font=\bfseries, yshift=-2em] (title) 
    at (current bounding box.south)
    {Figure 1. The spines flow through the tree forward in time.};
\end{tikzpicture}

\subsection{The spine partition change of measure $\Q^{\alpha,T}$}
For any set $S$ and $k\ge 0$, let $S^{(k)}$ be the set of distinct $k$-tuples from $S$, and for $n\ge0$, write
\[n^{(k)} = \begin{cases} n(n-1)(n-2)\ldots (n-k+1) & \hbox{ if } n\ge k\\ 0 &\hbox{ otherwise.}\end{cases}\]
Note that $|S^{(k)}| = |S|^{(k)}$. Let $\mathcal{F}^\varnothing_t$ be the $\sigma$-algebra containing all the information about the underlying continuous-time Galton-Watson tree up until time $t$, but without any knowledge of which particles the spines are following.
For a subset $A$ of $\mathbb{N}$, we call the set of processes $\{ (\xi_t^A)_{t \geq 0} : a \in A \}$ the $A$-spines. Let $(\mathcal{F}^A_t)_{t \geq 0}$ be the filtration containing all the information about the underlying tree and the carriers of $A$-spines until time $t$:
\begin{align*}
\mathcal{F}^A_t = \sigma\Big( \mathcal{F}^\varnothing_t ; (\xi^a_s)_{ s \in [0,t] }, a \in A \Big).
\end{align*}
We note that our notation is consistent with taking $A$ to be the empty set $\varnothing$. Furthermore, if $B \subset A$, then $\mathcal{F}^B_t \subset \mathcal{F}^A_t$ for each $t$, and in particular, $\mathcal{F}^\varnothing_t \subset \mathcal{F}^A_t$.
\\
\\We now examine the probabilities, conditional on $\mathcal{F}^\varnothing_T$-knowledge, that a given spine is following a given particle in $\mathcal{N}_T$. For a particle in $u \in \mathcal{N}_T$, let $Q(u)$ be the product of offspring sizes of ancestors of $u$:
\begin{align*}
Q( u ) = \prod_{ v < u } L_v. 
\end{align*}
Note that for the $a$-spine to be following particle $u \in \mathcal{N}_T$, for each strict ancestor $v$ of $u$, the $a$-spine must have chosen the `correct' offspring of the $L_v$ offspring of $v$ to continue following. Hence
 \begin{align}\label{SpineProb}
\P ( \xi^{a}_T = u | \mathcal{F}^\varnothing_T ) = Q(u)^{-1}.
\end{align} 
Since the spines behave independently of one another, the probability that the $A$-spines are following a list $( u_a: a \in A)$ of (possibly non-distinct) members of $\mathcal{N}_t$ is
\begin{align} \label{MSpineProb}
\P \big( \cap_{ a \in A} \{ \xi^a_t = u_a \} \big| \mathcal{F}^\varnothing_t \big) = \prod_{a \in A } Q(u_a)^{-1}.
\end{align}
We emphasise that since in general, the quantities $Q(u)^{-1}$ in \eqref{SpineProb}
vary for different $u \in \mathcal{N}_T$, under $\P$ the spines are more likely to be following some particles than others. This is illustrated in Figure 2.
\\ 
\\ \begin{tikzpicture}[xscale=1.8,yscale=0.45]
\draw[thick] (0,-5) node(aa){Time 0};
\draw[thick] (6,-5) node(aa){Time T};
\draw[thick] (0,-9.5) -- (0,-5.5);
\draw[thick] (6,-9.5) -- (6,-5.5);
\draw[thick, gray] (0,-8) -- (2,-8);
\draw[thick, gray] (4,-6.5) -- (2,-6.5) -- (2,-9) -- (6,-9); 
\draw[thick, gray] (4,-6) -- (4, -7.5) -- (6,-7.5) ;
\draw[thick, gray] (4,-6.75) -- (5,-6.75);
\draw[thick, gray] (4,-6) -- (6,-6);
\draw[thick] (6.8,-6) node(person2){\small $u_1~~~ Q(u_1)^{-1} = 1/6$};
\draw[thick] (6.8,-7.5) node(person2){\small $u_2~~~ Q(u_2)^{-1} = 1/6$};
\draw[thick] (6.8,-9) node(person2){\small $u_3~~~ Q(u_3)^{-1} = 1/2$};
\node[align=center,font=\bfseries, yshift=-2em] (title) 
    at (current bounding box.south)
    {Figure 2. The respective probabilities that a given spine is following each \\of $\{u_1,u_2,u_3\}$ at time $T$};
\end{tikzpicture}
\vspace{5mm}

Define the $(\mathcal{F}^A_t)_{t \geq 0}$-adapted, $(\Pi^A \cup \{\Delta\})$-valued process $(\theta^A_t)_{t \geq 0 }$ as follows. If there is an element $a$ of $A$ such that $\xi^a_t = \Delta$, set $\theta^A_t = \Delta$. Otherwise, (that is, if every spine in $A$ is following a living particle at time $t$) let $\theta^A_t$ be the partition of $A$ defined by the equivalence relation
\begin{align*}
a \sim_t b \iff \text{Spines $a$ and $b$ are following the same particle at time $t$, i.e. $\xi^a_t = \xi^b_t \in \mathcal{N}_t$}.
\end{align*}
For the remainder of Section \ref{SpinesSec}, fix a finite subset $A$ of $\mathbb{N}$ and fix a partition $\alpha$ of $A$ into $k$ blocks. Define
\begin{align}\label{Defg}
\hat{\zeta}_{\alpha,t}= \ind\{ \theta^A_t = \alpha \} \prod_{a \in A} Q( \xi^a_t) .
\end{align}
The following lemma gives the $\P$-conditional expectation of $\hat{\zeta}_{\alpha,t}$ given $\mathcal{F}^\varnothing_t$.

\begin{lem} \label{gproj}
For any $t \geq 0$,
$\P[\hat{\zeta}_{\alpha,t}|\F^\varnothing_t] = N^{(k )}_t.$
\end{lem}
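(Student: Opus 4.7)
The plan is to condition on the tree structure $\calF^\varnothing_t$ and then expand the indicator $\ind\{\theta^A_t = \alpha\}$ as a sum over all admissible ways for the $A$-spines to realize the partition $\alpha$. Specifically, the event $\{\theta_t^A = \alpha\}$ decomposes as a disjoint union
\[
\{\theta_t^A = \alpha\} = \bigsqcup_{(u_\Gamma)_{\Gamma \in \alpha} \in \mathcal{N}_t^{(k)}} \bigcap_{\Gamma \in \alpha} \bigcap_{a \in \Gamma} \{\xi^a_t = u_\Gamma\},
\]
where the union is over \emph{ordered} $k$-tuples of \emph{distinct} particles in $\mathcal{N}_t$, indexed by the $k$ blocks of $\alpha$. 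Thus on the event in question, $\prod_{a \in A} Q(\xi^a_t) = \prod_{\Gamma \in \alpha} Q(u_\Gamma)^{|\Gamma|}$.

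From here I would take $\calF^\varnothing_t$-conditional expectations. Since $\calF^\varnothing_t$ encodes the underlying tree (and hence the values $Q(u)$ for every $u \in \mathcal{N}_t$), the factor $\prod_{\Gamma} Q(u_\Gamma)^{|\Gamma|}$ may be pulled outside the conditional probability. Applying the conditional independence of the spines given $\calF^\varnothing_t$, together with the identity \eqref{MSpineProb}, yields
\[
\P\Bigl(\bigcap_{\Gamma \in \alpha}\bigcap_{a \in \Gamma} \{\xi^a_t = u_\Gamma\} \Bigm| \calF^\varnothing_t\Bigr) = \prod_{\Gamma \in \alpha}\prod_{a \in \Gamma} Q(u_\Gamma)^{-1} = \prod_{\Gamma \in \alpha} Q(u_\Gamma)^{-|\Gamma|}.
\]

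Multiplying and summing gives
\[
\P[\hat{\zeta}_{\alpha,t} \mid \calF^\varnothing_t] = \sum_{(u_\Gamma) \in \mathcal{N}_t^{(k)}} \prod_{\Gamma \in \alpha} Q(u_\Gamma)^{|\Gamma|} \cdot \prod_{\Gamma \in \alpha} Q(u_\Gamma)^{-|\Gamma|} = \sum_{(u_\Gamma) \in \mathcal{N}_t^{(k)}} 1 = N_t^{(k)},
\]
where in the final equality we have used $|\mathcal{N}_t^{(k)}| = N_t^{(k)}$. This would conclude the proof.

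There is no real obstacle here: the point of the calculation, and the reason one multiplies by $\prod_{a \in A} Q(\xi^a_t)$ in the definition of $\hat{\zeta}_{\alpha,t}$, is precisely so that the $Q$-weights cancel the downward bias inherent in the spines' navigation of the tree (cf.\ Figure 2), leaving a uniform counting measure on ordered $k$-tuples of distinct particles. The only care that needs to be taken is bookkeeping: writing $\{\theta_t^A = \alpha\}$ as a disjoint union indexed by $\mathcal{N}_t^{(k)}$ rather than double-counting, and tracking the exponents $|\Gamma|$ that appear when several spines in a common block all contribute the same factor $Q(u_\Gamma)$.
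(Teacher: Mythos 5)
Your proposal is correct and follows essentially the same route as the paper: decompose $\{\theta^A_t=\alpha\}$ as a disjoint union over ordered $k$-tuples of distinct particles indexed by the blocks of $\alpha$, pull out the $\F^\varnothing_t$-measurable $Q$-factors, and use spine independence together with the conditional following probability $Q(u)^{-1}$ so that the weights cancel, leaving $|\Nc_t^{(k)}|=N_t^{(k)}$. The only cosmetic difference is that you invoke \eqref{MSpineProb} with repeated entries and group exponents as $Q(u_\Gamma)^{\pm|\Gamma|}$, whereas the paper factorises over individual spines via \eqref{SpineProb}; these are the same computation.
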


\begin{proof}
Note that if $\alpha = \{ A_1,\ldots,A_k \}$, we can decompose the event $\{ \theta_t^A = \alpha \}$ into the disjoint union
\begin{align*}
\{ \theta_t^A = \alpha \} = \bigcup_{ (u_1,\ldots,u_k) \in \mathcal{N}_T^{(k)} } \cap_{i=1}^k \cap_{ a \in A_i} \{ \xi^a_t = u_i \}.
\end{align*}
It follows that 
\begin{align*}
\P[\hat{\zeta}_{\alpha,t}|\F^\varnothing_t]  &= \P\bigg[  \ind\{ \theta^A_t = \alpha \} \prod_{a \in A} Q(\xi^a_t)   \bigg| \mathcal{F}^\varnothing_t \bigg] 
\\&= \P\bigg[ \sum_{u\in\Nc_t^{(k)}} \prod_{i = 1}^k \prod_{ a \in A_i }  \ind \{ \xi^a_t = u_i \}  Q(\xi^a_t) \bigg| \F^\varnothing_t\bigg]\\
&= \sum_{u\in\Nc_t^{(k)}} \P \bigg[\prod_{i = 1}^k \prod_{ a \in A_i }  \ind \{ \xi^a_t = u_i \}  Q(u_i)  \bigg| \mathcal{F}^\varnothing_t \bigg]
\end{align*}
since $\mathcal{N}_t \in \mathcal{F}^\varnothing_t$. Now since the spines are independent,
\begin{align*}
\P \bigg[\prod_{i = 1}^k \prod_{ a \in A_i } \ind \{ \xi^a_t = u_i \}  Q(u_i) \bigg| \mathcal{F}^\varnothing_t \bigg] = \prod_{i = 1}^k \prod_{ a \in A_i } \P \left[ \ind \{ \xi^a_t = u_i \}  Q(u_i)\bigg| \mathcal{F}^\varnothing_t \right].
\end{align*}
Now, $Q(u_i) \in \mathcal{F}_t^\varnothing$, so for every $a,i$,
\begin{align*}
 \P \left[ \ind \{ \xi^a_t = u_i \}  Q(u_i) \bigg| \mathcal{F}^\varnothing_t \right] = Q(u_i)  \P ( \xi^a_t = u_i   | \mathcal{F}^\varnothing_t ) = 1 
\end{align*}
by \eqref{SpineProb}, and hence
\begin{align} \label{RV1}
\P \bigg[\prod_{i = 1}^k \prod_{ a \in A_i } \ind \{ \xi^a_t = u_i \}  Q(u_i) \bigg| \mathcal{F}^\varnothing_t \bigg] = \prod_{i = 1}^k \prod_{ a \in A_i } 1 = 1.
\end{align}
Finally,
\[\P[\hat{\zeta}_{\alpha,t}|\F^\varnothing_t] = \sum_{u\in\Nc_t^{(k)}}   1 = |\Nc_t^{(k)}| = N_t^{(k)}.\]
\end{proof}
Now let $T$ be a fixed time. By the previous lemma $\P[ \hat{\zeta}_{\alpha,T}] = \P [ \P[ \hat{\zeta}_{\alpha,T}| \mathcal{F}^\varnothing_T ]] = \P[ N_T^{(k)}]$, so the random variable 
\[\zeta_{\alpha,T} = \frac{\hat{\zeta}_{\alpha,t}}{\P[N^{(k)}_T]}\]
has unit mean, and we can define a new probability measure $\Q^{\alpha,T}$ on $\mathcal{F}^A_T$ by setting
\begin{equation} \label{francislabel1}
\left.\frac{\d \Q^{\alpha,T}}{\d\P}\right|_{\F^A_T} = \zeta_{\alpha,T}.
\end{equation}
Moreover, by Lemma \ref{gproj} we have
\begin{equation}\label{RadNik0}
\left.\frac{\d \Q^{\alpha,T}}{\d\P}\right|_{\F^\varnothing_T} = \P[ \zeta_{\alpha,T} | \mathcal{F}^\varnothing_T ] = \frac{N_T^{(k)}}{\P[N_T^{(k)}]} =: Z_{k,T}.
\end{equation}


\subsection{Uniformity properties of $\Q^{\alpha,T}$ }  \label{SecUni} 

The goal of this section is to prove that under $\Q^{\alpha,T}$, conditional on $\mathcal{F}^\varnothing_T$, the particles the $A$-spines are following at the time $T$ are equally likely to be any $k$-tuple alive. We then exploit this property to relate the spine process $(\theta^A_t)_{t \in [0,T]}$ to the process $(\pi^{k,L,T}_t)_{t \in [0,T]}$ associated with choosing $k$ particles uniformly from those alive at time $T$.\\

The following result, giving a relationship between projections and changes of measure, is lifted directly from Harris, Johnston and Roberts \cite[Lemma 13]{HJR17}.

\begin{lem}\label{condform}
Suppose that $\Q$ and $\P$ are probability measures on the $\sigma$-algebra $\tilde{\F}$, and that $\F$ is a sub-$\sigma$-algebra of $\tilde{\F}$. If
\[\left.\frac{\d\Q}{\d\P}\right|_{\tilde{\mathcal{F}}} = \zeta \hspace{4mm} \hbox{ and } \hspace{4mm} \left.\frac{\d\Q}{\d\P}\right|_{\mathcal{F}} = Z,\]
then for any non-negative $\tilde{\mathcal{F}}$-measurable X,
\[Z\Q[X|\mathcal{F}] = \P[\zeta X|\mathcal{F}] \hspace{4mm} \P\hbox{-almost surely}.\]
\end{lem}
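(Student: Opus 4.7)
The plan is to verify the claimed identity by testing both sides against an arbitrary bounded $\mathcal{F}$-measurable random variable $Y$, and invoking the defining property of Radon--Nikodym derivatives on the two $\sigma$-algebras in turn. Since both $Z\Q[X|\F]$ and $\P[\zeta X|\F]$ are $\mathcal{F}$-measurable, showing that $\P[Y \cdot Z\Q[X|\F]] = \P[Y \cdot \P[\zeta X|\F]]$ for every bounded $\mathcal{F}$-measurable $Y$ will establish the claimed $\P$-a.s. equality.

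The right-hand side is easy: by the defining property of $\P$-conditional expectation, $\P[Y \cdot \P[\zeta X|\F]] = \P[Y\zeta X]$. For the left-hand side, I would chain together the two change-of-measure identities. Because $Y \cdot \Q[X|\F]$ is $\mathcal{F}$-measurable, the hypothesis $\frac{\d\Q}{\d\P}|_\mathcal{F} = Z$ gives
\begin{align*}
\P\bigl[Y \cdot Z\Q[X|\F]\bigr] = \Q\bigl[Y \cdot \Q[X|\F]\bigr].
\end{align*}
Then by the defining property of $\Q$-conditional expectation and the $\mathcal{F}$-measurability of $Y$, this equals $\Q[YX]$. Finally, applying the hypothesis $\frac{\d\Q}{\d\P}|_{\tilde{\mathcal{F}}} = \zeta$ (with $YX$ being $\tilde{\mathcal{F}}$-measurable and non-negative) yields $\Q[YX] = \P[Y\zeta X]$, matching the right-hand side.

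With both sides equal to $\P[Y\zeta X]$ for every bounded $\mathcal{F}$-measurable $Y$, the two $\mathcal{F}$-measurable random variables $Z\Q[X|\F]$ and $\P[\zeta X|\F]$ coincide $\P$-almost surely. The only mild subtlety is the set $\{Z=0\}$, where the left-hand side vanishes trivially; but on this set one also has $\P[\zeta X|\F]=0$ almost surely, since $\P[\zeta X \mathbbm{1}_{\{Z=0\}}] = \Q[X \mathbbm{1}_{\{Z=0\}}] \leq \Q[\{Z=0\}] \cdot \sup = 0$ when one first reduces to bounded $X$ and notes $\Q(\{Z=0\}) = \P[Z\mathbbm{1}_{\{Z=0\}}] = 0$. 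A standard monotone class / monotone convergence argument lifts from bounded $X$ to general non-negative $X$, which I expect to be the only technical point worth writing out carefully; the core calculation itself is simply a two-step application of the tower / Bayes identity for Radon--Nikodym derivatives.
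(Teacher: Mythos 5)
Your proposal is correct and is essentially the paper's own argument: the paper likewise tests $Z\Q[X|\F]$ against $\F$-measurable sets via the same chain $\P[Z\Q[X|\F]S]=\Q[\Q[X|\F]S]=\Q[XS]=\P[\zeta X S]$ and concludes by the defining property of conditional expectation. Your additional remarks about the set $\{Z=0\}$ and reducing to bounded $X$ are harmless but not needed, since the test-function identity already pins down both sides $\P$-almost surely.
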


\begin{proof}
For any $S \in \F$,
\[ \P[ Z \Q[X | \F] S ] = \Q[ \Q[X | \F] S] = \Q[ X S ] = \P[ \zeta X S ]. \]
Since $ Z \Q[X | \F]$ is $\F$-measurable, it therefore satisfies the definition of conditional expectation of $\zeta X$ with respect to $\F$ under $\P$.
\end{proof}

Working in parallel with the derivation of \cite[Equation (10)]{HJR17}, by applying Lemma \ref{condform} using \eqref{francislabel1} and \eqref{RadNik0}, with $\tilde{\mathcal{F}} = \mathcal{F}^A_T$, and $\mathcal{F} = \mathcal{F}^{\varnothing}_T$, we find that for any non-negative $\F^A_T$-measurable random variable $X$, on the event $\{Z_{k,T}>0\}$,
\begin{equation}\label{QgivenF}
\Q^{\alpha,T}[X|\F^\varnothing_T] = \frac{1}{Z_{k,T}}\P[X\zeta_{\alpha,T}|\F^\varnothing_T].
\end{equation}
Note $\zeta_{\alpha,T}$ is supported on $\{ \theta^A_T = \alpha \}$, and hence $
\Q^{\alpha,T}( \theta^A_T = \alpha ) = 1$. In particular, since $\alpha$ partitions $A$ into $k$ blocks, under $\Q^{\alpha,T}$ there must be at least $k$ distinct particles alive at time $T$ for the spines to follow, and hence
\begin{align*}
\Q^{\alpha,T}( N_T \geq k ) = 1.
\end{align*}
In summary, $\Q^{\alpha,T}$-almost surely the $A$-spines at time $T$
are distributed across $k$ different particles in $\mathcal{N}_T$ 
and induce the partition $\alpha$ of $A$ at time $T$.
The following lemma tells us that given knowledge of the tree but not the spines, under $\Q^{\alpha,T}$ the $k$-tuple of carriers of $A$-spines are equally likely to be any $k$-tuple alive.

\begin{lem}\label{Quniform}
The $\Q^{\alpha,T}$-conditional probability given $\mathcal{F}^\varnothing_T$ that the $A$-spines are following a particular $k$-tuple $(u_1,\ldots,u_k) \in \mathcal{N}_T^{(k)}$ 
equals $1/N_T^{(k)}$. That is
\begin{align*}
\Q^{\alpha,T} \Big( \cap_{i = 1}^k \cap_{ a \in A_i }\{ \xi^a_t = u_i \}  \Big| \mathcal{F}^\varnothing_T \Big) = \frac{1}{N_T^{(k)} }. 
\end{align*} 
\end{lem}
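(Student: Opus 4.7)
The plan is to apply the conditional Radon–Nikodym identity \eqref{QgivenF} directly with
\[
X = \prod_{i=1}^k \prod_{a \in A_i} \ind\{\xi^a_T = u_i\},
\]
where $\alpha = \{A_1,\ldots,A_k\}$ and $(u_1,\ldots,u_k) \in \mathcal{N}_T^{(k)}$. Since the $u_i$ are all distinct, the event $\{X = 1\}$ automatically forces $\theta^A_T = \alpha$, so the indicator $\ind\{\theta^A_T = \alpha\}$ appearing in $\zeta_{\alpha,T}$ is redundant on this event. Moreover, on $\{X=1\}$ we have $\prod_{a \in A}Q(\xi^a_T) = \prod_{i=1}^k Q(u_i)^{|A_i|}$, which is $\mathcal{F}^\varnothing_T$-measurable and can be pulled out of the conditional expectation.

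Next I would compute $\P[X\zeta_{\alpha,T}|\mathcal{F}^\varnothing_T]$. Pulling out the factor $\prod_i Q(u_i)^{|A_i|}/\P[N_T^{(k)}]$ and using independence of the spines, the conditional expectation factorises as
\[
\prod_{i=1}^k \prod_{a \in A_i} \P(\xi^a_T = u_i \mid \mathcal{F}^\varnothing_T) = \prod_{i=1}^k Q(u_i)^{-|A_i|},
\]
by \eqref{SpineProb}. The $Q(u_i)^{|A_i|}$ factors cancel (this is essentially the same cancellation as in \eqref{RV1} of the proof of Lemma \ref{gproj}), leaving $\P[X\zeta_{\alpha,T}|\mathcal{F}^\varnothing_T] = 1/\P[N_T^{(k)}]$.

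Finally, since $Z_{k,T} = N_T^{(k)}/\P[N_T^{(k)}]$ and $\Q^{\alpha,T}(N_T \geq k) = 1$ (so $Z_{k,T}>0$ $\Q^{\alpha,T}$-a.s.), substituting into \eqref{QgivenF} yields
\[
\Q^{\alpha,T}[X \mid \mathcal{F}^\varnothing_T] = \frac{1}{Z_{k,T}} \cdot \frac{1}{\P[N_T^{(k)}]} = \frac{1}{N_T^{(k)}},
\]
as required. There is no real obstacle here — the lemma is essentially a bookkeeping consequence of the definition of $\zeta_{\alpha,T}$, the independence of the spines, and the formula \eqref{SpineProb}; the only point that needs care is noting that distinctness of the $u_i$ makes the partition-indicator in $\zeta_{\alpha,T}$ automatic on the event $\{X = 1\}$.
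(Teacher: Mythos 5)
Your proposal is correct and follows essentially the same route as the paper: both apply the conditional Radon--Nikodym identity \eqref{QgivenF} to the indicator of the event that the $A$-spines follow $(u_1,\ldots,u_k)$, observe that distinctness of the $u_i$ makes the indicator $\ind\{\theta^A_T=\alpha\}$ in $\zeta_{\alpha,T}$ redundant, and then use spine independence and \eqref{SpineProb} (the content of \eqref{RV1}) to cancel the $Q(u_i)$ factors, leaving $1/N_T^{(k)}$. The only cosmetic caveat is that rather than ``pulling out'' $\prod_i Q(u_i)^{|A_i|}$ on the event $\{X=1\}$, one should note the random-variable identity $X\prod_{a\in A}Q(\xi^a_T)=X\prod_i Q(u_i)^{|A_i|}$ before taking the factor outside the conditional expectation.
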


\begin{proof}
Note that if $N_T\ge k$ then $Z_{k,T}>0$. Then by \eqref{QgivenF}, for any $u\in \Nc_T^{(k)}$,
\begin{align*}
\Q^{\alpha,T}\bigg(\cap_{i = 1}^k \cap_{ a \in A_i }\{ \xi^a_t = u_i \}  \bigg| \F^\varnothing_T\bigg)  &= \frac{1}{Z_{k,T}} \P \bigg[\zeta_{\alpha,T}\ind\{\cap_{i = 1}^k \cap_{ a \in A_i }\{ \xi^a_t = u_i \}  \} \bigg| \F^\varnothing_T \bigg]
 \\ &= \frac{\P[N_T^{(k)}]}{N_T^{(k)}} \frac{1}{\P[N_T^{(k)}]} \P \bigg[\prod_{i = 1}^k \prod_{ a \in A_i }\ind \{ \xi^a_t = u_i \}  Q(u_i)  \bigg| \mathcal{F}^\varnothing_t \bigg] 
\\ &= \frac{1}{N_T^{(k)}} ,
\end{align*}
where the third equality follows from \eqref{RV1}.
\end{proof}
Given $u=(u_1,\ldots,u_k) \in \mathcal{N}^{(k)}_T$, for each $t \in [0,T]$, let $\pi(u)_t$ be the partition of $\{1,\ldots,k\}$ defined by setting
\begin{align*}
\text{$i$ and $j$ are in the same block of $\pi(u)_t$} \iff \text{$u_i$ and $u_j$ share a common time-$t$ ancestor}.
\end{align*}
Let $(\pi^{k,L,T}_t)_{t \in [0,T]}$ be the partition process associated with picking $k$ distinct particles $U_1,\ldots,U_k$ uniformly from those alive at time $T$ (as defined in the introduction). It follows from the definition of $(\pi^{k,L,T}_t)_{t \in [0,T]}$ that
\begin{align} \label{uniform def}
\ind_{N_T \geq k} \P( \pi^{k,L,T}_{t_1} = \gamma_1, \ldots, \pi^{k,L,T}_{t_n} = \gamma_n | \mathcal{F}^\varnothing_T )  = \frac{ \ind_{N_T \geq k} }{N_T^{(k)} } \sum_{ u \in \mathcal{N}_T^{(k)} } \ind \left \{  \pi(u)_{t_1} = \gamma_1, \ldots, \pi(u)_{t_n} = \gamma_n \right\} .
\end{align}
We use the notation $\Q^{k,T}$ for the change of measure $\Q^{ \{ \{1\}, \ldots, \{ k \} \}, T}$ associated with partitioning $\{1,\ldots,k\}$ into singletons, and similarly we write $\theta^k_t := \theta^{\{1,\ldots,k\}}_t$. The following corollary is the main idea of this section, relating the genealogical process $(\pi^{k,L,T}_t)_{t \in [0,T]}$ to the spine process $(\theta^k_t)_{t \in [0,T]}$.

\begin{cor} \label{shreve} 
We have the identity
\begin{align*}
\ind_{N_T \geq k} \P( \pi^{k,L,T}_{t_1} = \gamma_1, \ldots, \pi^{k,L,T}_{t_n} = \gamma_n | \mathcal{F}^\varnothing_T )   = \ind_{N_T \geq k} \Q^{k,T} \left( \theta^k_{t_1} = \gamma_1, \ldots, \theta^k_{t_n} = \gamma_n | \mathcal{F}^\varnothing_T \right) .
\end{align*}
\end{cor}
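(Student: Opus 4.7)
The plan is to combine the uniformity property from Lemma \ref{Quniform} with the simple observation that on the event that no spine has reached the cemetery by time $T$, the process $(\theta^k_t)_{t \in [0,T]}$ is completely determined by the time-$T$ carriers $(\xi^1_T,\ldots,\xi^k_T)$ of the spines.

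Specifically, I would first apply Lemma \ref{Quniform} in the special case $A = \{1,\ldots,k\}$, $\alpha = \{\{1\},\ldots,\{k\}\}$, so that the blocks $A_i$ are the singletons $\{i\}$. This yields
\begin{align*}
\Q^{k,T}\left( \xi^1_T = u_1,\ldots,\xi^k_T = u_k \,\big|\, \mathcal{F}^\varnothing_T \right) = \frac{1}{N_T^{(k)}}, \qquad (u_1,\ldots,u_k) \in \mathcal{N}_T^{(k)},
\end{align*}
on the event $\{N_T \geq k\}$. Next I would observe that $\Q^{k,T}(\theta^k_T = \{\{1\},\ldots,\{k\}\}) = 1$ (from the support of $\zeta_{\alpha,T}$ in \eqref{Defg}), so in particular under $\Q^{k,T}$ no spine has been sent to the cemetery by time $T$. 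Because the tree is branching, for any $t \leq T$ the carrier $\xi^i_t$ of the $i$-th spine at time $t$ must be the unique time-$t$ ancestor of $\xi^i_T$. Consequently, on $\{N_T \geq k\}$ and $\Q^{k,T}$-almost surely, the whole process $(\theta^k_t)_{t \in [0,T]}$ equals $(\pi(\xi^1_T,\ldots,\xi^k_T)_t)_{t \in [0,T]}$, where the deterministic map $u \mapsto \pi(u)_t$ was defined just before \eqref{uniform def}.

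Combining these two facts, I would decompose the event $\{\theta^k_{t_1} = \gamma_1,\ldots,\theta^k_{t_n} = \gamma_n\} \cap \{N_T \geq k\}$ according to the identity of the $k$-tuple of carriers at time $T$,
\begin{align*}
\left\{ \theta^k_{t_j} = \gamma_j\ \forall j \right\} \cap \{N_T \geq k\} = \bigcup_{u \in \mathcal{N}_T^{(k)}} \left\{ \pi(u)_{t_j} = \gamma_j\ \forall j,\ \xi^i_T = u_i\ \forall i \right\},
\end{align*}
and note that each indicator $\ind\{\pi(u)_{t_j} = \gamma_j\ \forall j\}$ is $\mathcal{F}^\varnothing_T$-measurable. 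Taking $\Q^{k,T}$-conditional expectations given $\mathcal{F}^\varnothing_T$ and applying the uniformity above gives, on $\{N_T \geq k\}$,
\begin{align*}
\Q^{k,T}\!\left( \theta^k_{t_j} = \gamma_j\ \forall j \,\big|\, \mathcal{F}^\varnothing_T \right) = \frac{1}{N_T^{(k)}} \sum_{u \in \mathcal{N}_T^{(k)}} \ind\{ \pi(u)_{t_j} = \gamma_j\ \forall j \}.
\end{align*}
By \eqref{uniform def}, the right-hand side is exactly $\P(\pi^{k,L,T}_{t_j} = \gamma_j\ \forall j \mid \mathcal{F}^\varnothing_T)$ on $\{N_T \geq k\}$, which is the claimed identity.

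There is no real obstacle here beyond the bookkeeping step of checking that $\theta^k_t$ coincides with $\pi(\xi^1_T,\ldots,\xi^k_T)_t$ on the full $\Q^{k,T}$-measure event $\{\theta^k_T = \{\{1\},\ldots,\{k\}\}\}$; once that is made explicit, the corollary is an immediate consequence of Lemma \ref{Quniform} and the definition of $(\pi^{k,L,T}_t)$ as the partition produced by a uniformly chosen ordered $k$-tuple.
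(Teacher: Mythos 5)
Your proposal is correct and follows essentially the same route as the paper: both decompose the event over the possible $k$-tuple of time-$T$ carriers, use the fact that on $\{\xi^i_T = u_i\ \forall i\}$ the spine partition process coincides with the deterministic ancestral process $(\pi(u)_t)_{t\in[0,T]}$, pull the $\mathcal{F}^\varnothing_T$-measurable indicators out of the conditional expectation, and apply Lemma \ref{Quniform} together with \eqref{uniform def}. The only difference is cosmetic: you spell out the justification (no cemetery under $\Q^{k,T}$, carrier at time $t$ is the unique time-$t$ ancestor of the time-$T$ carrier) that the paper leaves as an assertion.
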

\bp
Let $u = (u_1,\ldots,u_k) \in \mathcal{N}_T^{(k)}$. Note that on the event $\{ (\xi^1_T,\ldots,\xi^k_T) = (u_1,\ldots,u_k) \}$ (written $\{ \xi^{(k)}_T = u \}$ below) the processes $(\theta_t^k)_{t \in [0,T]}$ and $(\pi(u)_t)_{t \in [0,T]}$ are identical. Using this fact in the second equality below, we have
\begin{align*}
&\ind_{N_T \geq k}  \Q^{k,T} \left( \theta^k_{t_1} = \gamma_1, \ldots, \theta^k_{t_n} = \gamma_n | \mathcal{F}^\varnothing_T \right)\\  &= \ind_{N_T \geq k}  \Q^{k,T} \left[ \sum_{ u \in \mathcal{N}^{(k)}_T} \ind\{ \xi^{(k)}_T = u \} \ind \left \{ \theta^k_{t_1} = \gamma_1, \ldots, \theta^k_{t_n} = \gamma_n \right\} \Big| \mathcal{F}^\varnothing_T \right]\\
&=\ind_{N_T \geq k}   \Q^{k,T} \left[ \sum_{ u \in \mathcal{N}^{(k)}_T} \ind\{ \xi^{(k)}_T = u \} \ind \left \{\pi(u)_{t_1} = \gamma_1, \ldots, \pi(u)_{t_n} = \gamma_n \right\} \Big| \mathcal{F}^\varnothing_T \right].
\end{align*}
Using the property that $\mathcal{N}_T$ is $\mathcal{F}^\varnothing_T$ is measurable in the first equality below, and Lemma \ref{Quniform} in the second, we yield
\begin{align*}
&=  \ind_{N_T \geq k}  \sum_{ u \in \mathcal{N}^{(k)}_T}  \ind \left \{\pi(u)_{t_1} = \gamma_1, \ldots, \pi(u)_{t_n} = \gamma_n \right\} \Q^{k,T} \left[  \ind\{ \xi^{(k)}_T = u \}  \Big| \mathcal{F}^\varnothing_T \right]\\
&=  \frac{\ind_{N_T \geq k} }{N_T^{(k)} } \sum_{ u \in \mathcal{N}^{(k)}_T}  \ind \left \{\pi(u)_{t_1} = \gamma_1, \ldots, \pi(u)_{t_n} = \gamma_n \right\}.
\end{align*}
By \eqref{uniform def}, this proves the result. 
\ep


\subsection{The joint law of $N_T$ and $(\theta^A_t)_{t \in [0,T]}$ under $\Q^{\alpha,T}$.} \label{SecJoint}
The following theorem gives the joint law of the process $(\theta^A_t)_{t \in [0,T]}$ and $N_T$.

\begin{thm} \label{keytheorem}
Let $\bga = (\gamma_1,\ldots,\gamma_n)$ be a chain of partitions of $A$ such that $\gamma_n \prec \alpha$. Then 
\begin{align} \label{bobo}
\Q^{\alpha,T}( s^{N_T} , \theta^A_{t_1} = \gamma_1, \ldots, \theta^A_{t_n} = \gamma_n) = \frac{ s^k }{ \P[ N_T^{(k)} ] } \prod_{i=0}^n \prod_{ \Gamma \in \gamma_i } F_{\Delta t_i}^{b_i(\Gamma) } (F_{T-t_{i+1}}(s) ) ,
\end{align}
where $(b_i(\Gamma):i = 0,1,\ldots,n)$ are the fragmentation numbers associated with the partition sequence $(\gamma_0,\gamma_1,\ldots,\gamma_n,\gamma_{n+1})$ obtained by setting $\gamma_0 := \{ A \}$ and $\gamma_{n+1} := \alpha$.
\end{thm}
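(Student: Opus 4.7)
The plan is to unfold the $\Q^{\alpha,T}$-expectation back into a $\P$-computation using the change of measure from Section \ref{SpinesSec}, cancel the Radon--Nikodym weights by spine independence, and then establish the resulting generating-function identity by induction on $n$.

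By \eqref{francislabel1} and \eqref{Defg},
\begin{align*}
\Q^{\alpha,T}[s^{N_T}\,\ind\{\theta^A_{t_i}=\gamma_i\,\forall i\}]
=\frac{1}{\P[N_T^{(k)}]}\,\P\!\left[\hat\zeta_{\alpha,T}\,s^{N_T}\,\ind\{\theta^A_{t_i}=\gamma_i\,\forall i\}\right].
\end{align*}
Writing $\alpha=\{A_1,\ldots,A_k\}$, I would expand $\ind\{\theta^A_T=\alpha\}$ as a sum over ordered $k$-tuples $u=(u_1,\ldots,u_k)\in\Nc_T^{(k)}$ exactly as in the proof of Lemma~\ref{gproj}. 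On the event $\{\xi^a_T=u_i\,\forall a\in A_i,\,\forall i\}$ the product $\prod_{a\in A}Q(\xi^a_T)$ equals $\prod_i Q(u_i)^{|A_i|}$, and $\theta^A_{t_j}$ becomes a deterministic function of $u$ and the tree: namely, $\theta^A_{t_j}=\gamma_j$ iff for all $i,i'$ the particles $u_i,u_{i'}$ share a common ancestor at $t_j$ precisely when $A_i,A_{i'}$ lie in the same block of $\gamma_j$. Let $U_\bga\subseteq\Nc_T^{(k)}$ denote this ($\F^\varnothing_T$-measurable) set of ancestry-compatible $u$. Conditioning on $\F^\varnothing_T$ and using spine independence with \eqref{SpineProb} gives $\P[\prod_{i,\,a\in A_i}\ind\{\xi^a_T=u_i\}\mid\F^\varnothing_T]=\prod_i Q(u_i)^{-|A_i|}$, which cancels the $\prod_i Q(u_i)^{|A_i|}$ factor exactly, so the task reduces to proving
\begin{align}\label{PlanCentral}
\P\!\left[s^{N_T}\,|U_\bga|\right]
=s^k\prod_{i=0}^n\prod_{\Gamma\in\gamma_i}F_{\Delta t_i}^{b_i(\Gamma)}\!\left(F_{T-t_{i+1}}(s)\right).
\end{align}

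I would prove \eqref{PlanCentral} by induction on $n$. The base case $n=0$ is just $\P[s^{N_T}N_T^{(k)}]=s^k F_T^{(k)}(s)$, obtained by differentiating $F_T(s)=\P[s^{N_T}]$ $k$ times. For the inductive step I would condition on $\F^\varnothing_{t_1}$: any $u\in U_\bga$ singles out a $p$-tuple $(v^\Gamma)_{\Gamma\in\gamma_1}\in\Nc_{t_1}^{(p)}$ of most recent common ancestors at $t_1$ (one per block of $\gamma_1$, with $p=|\gamma_1|$), and by the branching property the subtrees rooted at the $v^\Gamma$ are conditionally independent Galton--Watson trees of duration $T-t_1$. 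The counting problem inside the $\Gamma$-subtree is exactly the inductive statement for the restricted chain $(\{\Gamma\},\gamma_2|^\Gamma,\ldots,\gamma_n|^\Gamma,\alpha|^\Gamma)$ with $n-1$ intermediate times, contributing a factor $s^{b_\Gamma}\prod_{i=1}^n\prod_{\Gamma'\in\gamma_i|^\Gamma}F_{\Delta t_i}^{b_i(\Gamma')}(F_{T-t_{i+1}}(s))$, where $b_\Gamma:=|\alpha|^\Gamma|$, while the $N_{t_1}-p$ unchosen particles at $t_1$ contribute $F_{T-t_1}(s)^{N_{t_1}-p}$. Amalgamating across $\Gamma\in\gamma_1$, the exponents combine via $\sum_\Gamma b_\Gamma=k$, and the bijection $(\Gamma,\Gamma')\mapsto\Gamma'$ (each $\Gamma'\in\gamma_i$ lies in a unique $\Gamma\in\gamma_1$) collapses the nested product to $\prod_{i=1}^n\prod_{\Gamma'\in\gamma_i}F_{\Delta t_i}^{b_i(\Gamma')}(F_{T-t_{i+1}}(s))$. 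Summing over $(v^\Gamma)$ produces $N_{t_1}^{(p)}$, and the outer expectation then yields $\P[N_{t_1}^{(p)}r^{N_{t_1}-p}]=F_{t_1}^{(p)}(r)$ at $r=F_{T-t_1}(s)$---precisely the missing $i=0$ factor $F_{\Delta t_0}^{b_0(A)}(F_{T-t_1}(s))$ since $\Delta t_0=t_1$ and $b_0(A)=p$.

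The main obstacle is the combinatorial bookkeeping in the inductive step: one must verify that the restricted chain $(\gamma_i|^\Gamma)_{i\ge 2}$ inside each subtree has breakage numbers identical to the $b_i(\Gamma')$ from the full chain---which holds because the blocks of $\gamma_{i+1}$ contained in a set $\Gamma'\subseteq\Gamma\in\gamma_1$ are the same whether or not one restricts to $\Gamma$---and that the subtree exponents $s^{b_\Gamma}$ combine correctly to $s^k$. Once that alignment is in place, the remaining calculation is driven entirely by the moment identity $\P[N_t^{(p)}r^{N_t-p}]=F_t^{(p)}(r)$.
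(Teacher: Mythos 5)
Your proposal is correct, and it reaches \eqref{bobo} by a route that is organised differently from the paper's. You strip out the spines at the very start: expanding $\ind\{\theta^A_T=\alpha\}$ over ordered tuples $u\in\Nc_T^{(k)}$, noting that on $\{\xi^a_T=u_i\ \forall a\in A_i\}$ each spine must sit at the unique time-$t_j$ ancestor of its terminal particle (so the events $\{\theta^A_{t_j}=\gamma_j\}$ become $\F^\varnothing_T$-measurable ancestry constraints on $u$), and cancelling the $Q$-weights against \eqref{MSpineProb}. This reduces the theorem to the purely genealogical counting identity \eqref{PlanCentral}, which you then prove by induction on $n$ by conditioning on $\F^\varnothing_{t_1}$ and using the branching property at the \emph{first} time point; the composition $F_{\Delta t_i}\circ F_{T-t_{i+1}}$ arises from the time-shift of the subtrees rooted at the $t_1$-ancestors, the unchosen particles supply $F_{T-t_1}(s)^{N_{t_1}-p}$, and the outer expectation $\P[N_{t_1}^{(p)}r^{N_{t_1}-p}]=F_{t_1}^{p}(r)$ at $r=F_{T-t_1}(s)$ produces the $i=0$ factor. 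The paper instead keeps the change-of-measure formalism throughout and inducts by conditioning on $\F^A_{t_n}$ at the \emph{last} time point: the blocks of $\gamma_n$ contribute $s^{|\alpha^\Gamma|}F_{T-t_n}^{|\alpha^\Gamma|}(s)$ via Lemma \ref{gproj}, and the inductive hypothesis is invoked through the measure $\Q^{\gamma_n,t_n}$ evaluated at the transformed argument $\tilde s=F_{T-t_n}(s)$, with the semigroup property applied at the end. The two arguments use the same essential ingredients (induction on $n$, branching property, the identity $\P[N_t^{(j)}s^{N_t-j}]=F_t^{j}(s)$), but yours is somewhat more elementary and makes transparent that the spines are only needed for the initial reduction, whereas the paper's version recycles its spine lemmas and stays inside the formalism it uses elsewhere. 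Your two flagged bookkeeping points -- that breakage numbers of the restricted chains $(\gamma_i|^\Gamma)$ agree with the $b_i(\Gamma')$ of the full chain, and that $\sum_{\Gamma\in\gamma_1}|\alpha|^\Gamma|=k$ -- are exactly the right things to check and are resolved correctly; note also that your factorisation over blocks of $\gamma_1$ is legitimate because ancestry partitions refine in time, so separation at $t_1$ forces separation at all later $t_j$, making the cross-block constraints automatic.
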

\bp
We proceed by induction. The case $n=0$ follows immediately from \eqref{RadNik0}, since
\begin{align*}
\Q^{\alpha,T}(s^{N_T}) = \P\left[ \frac{N_T^{(k)} }{ \P[N_T^{(k)}] } s^{N_T} \right] = \frac{ s^k }{ \P[N_T^{(k)} ] } F_T^k(s).
\end{align*}
Now we consider the general case $n\geq 0$. Using the definition \eqref{francislabel1} in the first equality below and the tower property in the second,
\begin{align*}
\Q^{\alpha,T}( s^{N_T}, \theta^A_{t_1} = \gamma_1, \ldots, \theta^A_{t_n} = \gamma_n) &= \frac{1}{ \P[N_T^{(k)} ] } \P\left[ \ind \{ \theta_T^A = \alpha \}  \prod_{a \in A} Q(\xi^a_T) ~ \ind\{ \theta^A_{t_1} = \gamma_1, \ldots, \theta^A_{t_n} = \gamma_n \} s^{N_T} \right]\\
&= \frac{1}{ \P[N_T^{(k)} ] } \P\left[ \P \left[ \ind \{ \theta_T^A = \alpha \}  \prod_{a \in A} Q(\xi^a_T) ~ \ind\{ \theta^A_{t_1} = \gamma_1, \ldots, \theta^A_{t_n} = \gamma_n \} s^{N_T} \Big| \mathcal{F}^A_{t_n} \right] \right]\\
&= \frac{1}{ \P[N_T^{(k)} ] } \P\left[ H(\bga,s)_{t_n} \right],
\end{align*}
where
\begin{align*}
H(\bga,s)_{t_n} &= \left( \prod_{a \in A} Q(\xi^a_{t_n} ) \right) \ind\{ \theta^A_{t_1} = \gamma_1, \ldots, \theta^A_{t_n} = \gamma_n \}  \P \left[  \prod_{a \in A} \frac{Q(\xi^a_T)}{ Q(\xi^a_{t_n} ) } \ind \{ \theta_T^A = \alpha \}  s^{N_T} \Big| \mathcal{F}^A_{t_n} \right].
\end{align*}
Note that on the event $\{ \theta_{t_n}^A = \gamma_n \}$, the set $\{ \xi^a_{t_n} : a \in \Gamma \}$ is a singleton for each block $\Gamma$ of $\gamma_n$. In particular, on $\{ \theta_{t_n}^A = \gamma_n \}$, we can decompose $\mathcal{N}_T$ into the disjoint union
\begin{align*}
\mathcal{N}_T = \left( \bigcup_{\Gamma \in \gamma_n } \mathcal{N}^{\Gamma}_{t_n,T} \right) \cup \left\{ u \in \mathcal{N}_T : \nexists a \in A  : \xi^a_{t_n} \leq u \right\},
\end{align*}
where $\mathcal{N}^{\Gamma}_{t_n,T}$ is the set of time-$T$ particles descended from the single element of $\{ \xi^a_{t_n} : a \in \Gamma \}$ of $\mathcal{N}_{t_n}$. Let $N_{t_n,T}^\Gamma$ be the size of $\mathcal{N}^{\Gamma}_{t_n,T}$ and let $\hat{N}_{t_n,T}$ be the size of $\left\{ u \in \mathcal{N}_T : \nexists a \in A  : \xi^a_{t_n} \leq u \right\}$. Then

\begin{align}
H(\bga,s)_{t_n} &=  \left( \prod_{a \in A} Q(\xi^a_{t_n} ) \right)  \ind\{ \theta^A_{t_1} = \gamma_1, \ldots, \theta^A_{t_n} = \gamma_n \}  \P \left[  \prod_{a \in A} \frac{Q(\xi^a_T)}{ Q(\xi^a_{t_n} ) } \ind \{ \theta_T^A = \alpha \}  s^{N_T} \Big| \mathcal{F}^A_{t_n} \right] \nonumber \\
&= \hat{\zeta}_{\gamma_n,t_n} \ind\{ \theta^A_{t_1} = \gamma_1, \ldots, \theta^A_{t_{n-1}} = \gamma_{n-1} \}   \P \left[  s^{\hat{N}_{t_n,T} } \prod_{\Gamma \in \gamma_n} \prod_{a \in A} \frac{Q(\xi^a_T)}{ Q(\xi^a_{t_n} ) } \ind \{ \theta_T^\Gamma = \alpha^\Gamma \}  s^{N_{t_n,T}^\Gamma} \Bigg| \mathcal{F}^A_{t_n} \right]  \label{tripping}
\end{align}
Now on the event $\{ \theta^A_{t_n} = \gamma_n \}$, since the spines in different blocks of $\Gamma$ are following different particles from $t_n$ onwards, the random variables  
\begin{align*}
\left\{  \prod_{a \in A} \frac{Q(\xi^a_T)}{ Q(\xi^a_{t_n} ) } \ind \{ \theta_T^\Gamma = \alpha^\Gamma \}  s^{N_{t_n,T}^\Gamma}  : \Gamma \in \gamma_n \right\}
\end{align*}
are conditionally independent of each other, of $\hat{N}_{t_n,T}$, and of $\mathcal{F}_{t_n}^A$, and are distributed like copies of $\hat{\zeta}_{\alpha^\Gamma,T-t_n} s^{N_{T-t_n}}$. In particular, using Lemma \ref{gproj} in the second equality below we have
\begin{align} \label{ariana}
 \P \left[  \prod_{\Gamma \in \gamma_n} \prod_{a \in A} \frac{Q(\xi^a_T)}{ Q(\xi^a_{t_n} ) } \ind \{ \theta_T^\Gamma = \alpha^\Gamma \}  s^{N_{t_n,T}^\Gamma} \Bigg| \mathcal{F}^A_{t_n} \right]   
 &= \prod_{ \Gamma \in \gamma_n }  \P \left[  \hat{\zeta}_{\alpha^\Gamma,T-t_n} s^{N_{T-t_n}} \right] \nonumber  \\
&=  \prod_{ \Gamma \in \gamma_n }  \P \left[  N_{T-t_n}^{(|\alpha^\Gamma|)}  s^{N_{T-t_n}} \right] \nonumber  \\
&= \prod_{ \Gamma \in \gamma_n } s^{|\alpha^\Gamma|}  F_{T-t_n}^{|\alpha^\Gamma|}(s).
\end{align}
Moreover, since on the event $\{\theta_{t_n}^A = \gamma_n\}$ there are $N_{t_n} - |\gamma_n|$ particles not carrying an $A$-spine at time $t_n$,
\begin{align} \label{grande}
\P[ s^{\hat{N}_{t_n,T}} | \mathcal{F}^A_{t_n} ] = F_{T-t_n}(s)^{N_{t_n} - |\gamma_n|}. 
\end{align}
Finally, writing $\tilde{s} := F_{T-t_n}(s)$, and then using \eqref{francislabel1} in the first equality below and the inductive hypothesis in the second, we have 
\begin{align} \label{minaj}
& \P \left[ \hat{\zeta}_{\gamma_n,t_n} \ind\{ \theta^A_{t_1} = \gamma_1, \ldots, \theta^A_{t_{n-1}} = \gamma_{n-1} \}    \tilde{s}^{N_t - |\gamma_n|}  \right]\nonumber   \\
&=  \P[ N_{t_n}^{(|\gamma_n|)} ]  \Q^{\gamma_n,t_n} \left[  \ind\{ \theta^A_{t_1} = \gamma_1, \ldots, \theta^A_{t_{n-1}} = \gamma_{n-1} \}    \tilde{s}^{N_t - |\gamma_n|}  \right] \nonumber \\
&=  \prod_{i=0}^{n-1} \prod_{ \Gamma \in \gamma_i } F_{\Delta t_i}^{b_i(\Gamma) } (F_{t_n-t_{i+1}}( \tilde{s} ) ).
\end{align}
Combining \eqref{ariana}, \eqref{grande} and \eqref{minaj}, by taking expectations of \eqref{tripping} we obtain
\begin{align*}
\P[H(\bga,s)_{t_n}] = s^{\sum_{\Gamma \in \gamma_n} |\alpha^\Gamma| } \prod_{i = 0}^{n-1} \prod_{ \Gamma \in \gamma_i } F_{\Delta t_i}^{b_i(\Gamma) } (F_{t_n-t_{i+1}}(\tilde{s} ) )   \prod_{ \Gamma \in \gamma_n } F_{T-t_n}^{|\alpha^\Gamma|}(s)  .
\end{align*}
Note that $|\alpha^\Gamma| = b_n(\Gamma)$, and in particular, $\sum_{\Gamma \in \gamma_n} |\alpha^\Gamma| = k$. Finally, by using the semigroup property $F_{t_n - t_{i+1}}(\tilde{s} ) = F_{T-t_{i+1}}(s)$, \eqref{bobo} follows.
\ep

\subsection{Inverting the change of measure}

\begin{lem} \label{Betalem}
Under a probability measure $\P$, let $N$ be a $\{0,1,\ldots\}$-valued random variable and let $X$ be a $[0,\infty)$-valued random variable. Then
\begin{align} \label{EqBetalem}
\int_0^1  \frac{ (1-s)^{ k - 1 }}{ (k-1)!  }   \P[ X ~ N^{(k)} s^{N- k} ]  ds =  \P[ X \ind_{N \geq k} ].
\end{align}
\bp 
Recall from the definition of the beta function that
\begin{align*} 
\frac{ (x-1)!(y-1)! }{ (x +y -1)!} = \int_0^1 s^{x-1} (1-s)^{y-1} ds.
\end{align*}
It follows that for $n \geq k$ we have the identity
\begin{align} \label{EqBetaRep}
\frac{1}{n^{(k)}} = \frac{ (n-k)!}{ n!} = \frac{1}{(k-1)!} \int_0^1 (1-s)^{k-1} s^{n-k} ds .
\end{align}
By interchanging the order of expectation and integration we may write
\begin{align*}
\int_0^1 \frac{ (1-s)^{ k - 1 } }{ (k-1)!}   \P[ X ~N^{(k)} s^{N- k} ] ds &=  \P \left[ X \ind_{N \geq k }  N^{(k)} ~ \frac{1}{ (k-1)!}  \int_0^1 (1-s)^{ k - 1 }   s^{N- k}   ds \right]
\\ &=  \P [ X \ind_{N \geq k } ].
\end{align*}
\ep

\end{lem}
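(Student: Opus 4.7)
The plan is to recognize this as a straightforward consequence of the Beta integral identity together with Tonelli's theorem, since everything in sight is nonnegative.

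First I would recall the Beta integral
\begin{equation*}
\int_0^1 (1-s)^{k-1} s^{n-k} \, ds = \frac{(k-1)!(n-k)!}{n!} = \frac{(k-1)!}{n^{(k)}}, \qquad n \geq k,
\end{equation*}
which rearranges to the pointwise identity
\begin{equation*}
\ind_{n \geq k} = \frac{n^{(k)}}{(k-1)!} \int_0^1 (1-s)^{k-1} s^{n-k} \, ds, \qquad n \in \{0,1,2,\ldots\},
\end{equation*}
where the right-hand side automatically vanishes when $n < k$ because $n^{(k)} = 0$ there (and so the integrand, interpreted as $0$ when $n < k$, vanishes).

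Substituting $n = N$ and multiplying by $X$ gives
\begin{equation*}
X \ind_{N \geq k} = X \cdot \frac{N^{(k)}}{(k-1)!} \int_0^1 (1-s)^{k-1} s^{N-k} \, ds.
\end{equation*}
Taking $\P$-expectations of both sides and then invoking Tonelli's theorem to interchange the expectation with the $ds$-integral — which is legitimate because $X \geq 0$, $N^{(k)} \geq 0$, and $(1-s)^{k-1} s^{N-k} \geq 0$ — yields exactly the claimed identity \eqref{EqBetalem}.

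The only conceivable obstacle is justifying the Fubini/Tonelli swap, but this is immediate from the nonnegativity assumptions built into the statement ($N \geq 0$ is integer-valued and $X \geq 0$), so no integrability hypothesis is needed and both sides are equal as elements of $[0,\infty]$.
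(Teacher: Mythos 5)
Your proof is correct and follows essentially the same route as the paper: the Beta integral identity \eqref{EqBetaRep} rearranged into an indicator representation, followed by an interchange of expectation and integration. The only difference is that you spell out the Tonelli justification via nonnegativity, which the paper leaves implicit.
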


We are now ready to wrap things together to prove Theorem \ref{MT FDD} under the assumption $\P[N_t^{(k)}]< \infty$ for all $t$. Namely for any partition chain $\gamma_i$ and mesh $(t_i)_{ i \leq n } $ we now show that
\begin{align} \label{EqFDD2}
\P( \pi^{k,L,T}_{t_1} = \gamma_1, \ldots, \pi^{k,L,T}_{t_n} = \gamma_n, ~ N_T \geq k ) = \int_0^1  \frac{(1-s)^{k-1}  }{(k-1)!} \prod_{i = 0}^{ n} \prod_{\Gamma \in \gamma_i } F_{\Delta t_i }^{b_i(\Gamma) }\Big( F_{T- t_{i+1}}(s) \Big) ds. 
\end{align}

\bp[Proof of Theorem \ref{MT FDD} under $k^{\text{th}}$-moment assumption]
By Lemma \ref{Betalem} 
\begin{align}
&\P( \pi^{k,L,T}_{t_1} = \gamma_1, \ldots, \pi^{k,L,T}_{t_n} = \gamma_n, ~ N_T \geq k ) \nonumber \\
&= \int_0^1 \frac{ (1-s)^{k-1}}{ (k-1)!} \P \left[ \ind \left\{  \pi^{k,L,T}_{t_1} = \gamma_1, \ldots, \pi^{k,L,T}_{t_n} = \gamma_n \right\} N_T^{(k)} s^{N_T - k } \right] ds \label{chimp1}.
\end{align}
Recall that $\Q^{k,T}$ is the change of measure associated with the partition $\{ \{1\} , \ldots, \{ k \} \}$. Under the assumption $\P[N_T^{(k)}] < \infty$, using \eqref{RadNik0} in the first equality below and Corollary \ref{shreve} in the second, we have
\begin{align}
&\P \left[ \ind \left\{  \pi^{k,L,T}_{t_1} = \gamma_1, \ldots, \pi^{k,L,T}_{t_n} = \gamma_n \right\} N_T^{(k)} s^{N_T - k } \right] \nonumber \\
&= \P[N_T^{(k)} ] \Q^{k,T} \left[  s^{N_T - k } \P \left( \pi^{k,L,T}_{t_1} = \gamma_1, \ldots, \pi^{k,L,T}_{t_n} = \gamma_n \Big| \mathcal{F}^\varnothing_T \right) \right] \nonumber \\
&= \P[N_T^{(k)} ] \Q^{k,T} \left[  s^{N_T - k } \ind \left \{  \theta^k_{t_1} = \gamma_1, \ldots, \theta^k_{t_n} = \gamma_n\right\} \right]  \label{chimp2}.
\end{align}
Finally, by Theorem \ref{keytheorem}
\begin{align}
  \P[N_T^{(k)} ]  \Q^{k,T} \left[  s^{N_T - k } \ind \left \{  \theta^k_{t_1} = \gamma_1, \ldots, \theta^k_{t_n} = \gamma_n\right\} \right] = \prod_{i = 0}^{ n} \prod_{\Gamma \in \gamma_i } F_{\Delta t_i }^{b_i(\Gamma) }\Big( F_{T- t_{i+1}}(s) \Big). \label{chimp3}
\end{align}
Combining \eqref{chimp1}, \eqref{chimp2} and \eqref{chimp3} yields Theorem \ref{MT FDD} under the assumption $\P[N_t^{(k)}] < \infty$ for all $t$. 
\ep

\section{Proofs of fixed-$T$ results} \label{Sec5}

In the sequel, we will return to writing $\E$ for the expectation associated with the probability measure $\P$. In Section \ref{section fdb} we prove a generalisation of the Fa\`a di Bruno formula for semigroups $(F_t)_{t \geq 0}$, which will be used in Section \ref{no moment} to lift the $\E[N_t^{(k)}] < \infty$ assumption from Theorem \ref{MT FDD}.

\subsection{Generalisations of the Fa\`a di Bruno formula} \label{section fdb}
In this section we prove several generalisations of the Fa\`a di Bruno formula
\begin{align} \label{crisp}
 (f \circ g )^k = \sum_{ \gamma \in \Pi^k } \left( f^{|\gamma|} \circ g \right) \prod_{ \Gamma \in \gamma } g^{|\Gamma|}.
\end{align}

Recall that for $\gamma \in \Pi^{k+j}$, $\gamma|^k \in \Pi^k$ is its projection onto $\{1,\ldots,k\}$. Given a chain $\boldsymbol\gamma = (\gamma_1,\ldots,\gamma_n) \in \Pi_n^{k+j}$, let $\boldsymbol\gamma|^k \in \Pi_n^k$ be the chain defined by projecting $\gamma_i$ onto $\{1,\ldots,k\}$ for each $i=1,\ldots,n$. That is, $(\boldsymbol\gamma|^k)_i := \gamma_i|^k$ for each $i$.\\

Finally, we recall that $h^j$ or $h^j(s)$ will refer to the $j^\text{\tiny{th}} $ derivative of a function $j$. (We write $h(s)^j$ for the  $j^\text{\tiny{th}}$ exponent).

\begin{lem} For each chain $\boldsymbol\gamma = (\gamma_1,\ldots,\gamma_n) \in \Pi^k_n$,
\begin{align} \label{faa projection} 
\left(  \prod_{i=0}^n \prod_{\Gamma \in \gamma_i } g_i^{ b_i(\Gamma)} \circ g_{i+1} \circ \ldots \circ g_n \right)^j = \sum_{ \boldsymbol\eta \in \Pi^{k+j}_n : \boldsymbol\eta|^k = \boldsymbol\gamma } \prod_{ i = 0 }^n \prod_{ H \in \eta _i} g_i^{b_i(H)} \circ g_{i+1} \circ \ldots \circ g_n.
\end{align}
\end{lem}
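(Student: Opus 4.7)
The plan is to prove \eqref{faa projection} by induction on $j$. The base case $j = 0$ is immediate: the only $\boldsymbol\eta \in \Pi^{k}_n$ with $\boldsymbol\eta|^k = \boldsymbol\gamma$ is $\boldsymbol\gamma$ itself, so both sides agree.

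For the inductive step, I would differentiate both sides of the $(j-1)$-identity once in $s$. The LHS becomes the $j$-th derivative appearing in \eqref{faa projection}. On the RHS, the sum is over $\boldsymbol\eta' \in \Pi^{k+j-1}_n$ with $\boldsymbol\eta'|^k = \boldsymbol\gamma$; I apply the product rule to each summand followed by the chain rule to the chosen factor. Differentiating the factor indexed by a pair $(i^*, H^*)$ with $H^* \in \eta'_{i^*}$ produces
\[\bigl(g_{i^*}^{b_{i^*}(H^*)+1} \circ g_{i^*+1} \circ \ldots \circ g_n\bigr) \cdot \prod_{i=i^*+1}^{n} \bigl(g_i' \circ g_{i+1} \circ \ldots \circ g_n\bigr),\]
with the remaining factors left intact.

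The key combinatorial observation is that pairs $(\boldsymbol\eta', (i^*, H^*))$ are in bijection with chains $\boldsymbol\eta \in \Pi^{k+j}_n$ satisfying $\boldsymbol\eta|^k = \boldsymbol\gamma$: given such data, the new element $k+j$ joins $H^*$ at level $i^*$, lies in the unique ancestor block of $H^*$ at each level $i < i^*$ (forced by the refinement property of the chain), and forms a singleton block at each level $i > i^*$. Tracking breakage numbers across this bijection confirms the term-by-term match between the differentiated RHS and the target sum. At level $i^*$, the block $H^* \cup \{k+j\}$ gains exactly one extra child (the singleton $\{k+j\}$ in $\eta_{i^*+1}$), so its breakage number rises to $b_{i^*}(H^*)+1$, matching the raised exponent from the chain rule. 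At each level $i > i^*$ the singleton $\{k+j\}$ contributes $g_i^{1} = g_i'$, matching the trailing chain-rule factors. At each level $i < i^*$ the enclosing block gains $k+j$ but no new children, leaving its breakage number unchanged. The main obstacle is precisely this breakage-number bookkeeping, together with the verification that the bijection is exhaustive; once these are checked, summing over $\boldsymbol\eta'$ and $(i^*, H^*)$ yields the identity at level $j$.
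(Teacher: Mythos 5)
Your proposal is correct and follows essentially the same route as the paper: an induction on $j$ whose inductive step differentiates once, applies Leibniz and the chain rule, and matches terms via the bijection in which the new element $k+j$ joins a chosen block $H^*$ at a chosen level $i^*$, sits in its ancestor blocks before $i^*$, and is a singleton afterwards, with the same breakage-number bookkeeping ($b_{i^*}(H^*)\mapsto b_{i^*}(H^*)+1$, extra factors $g_i^1$ for $i>i^*$). The paper merely packages this as an explicit $j=1$ case followed by an induction that reuses it together with the disjoint-union identity over projections, which is exactly your combined bijection on pairs $(\boldsymbol\eta',(i^*,H^*))$.
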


\bp
First we prove the case $j = 1$. By the Leibniz rule,
\begin{align}
&\left(  \prod_{i=0}^n \prod_{\Gamma \in \gamma_i } g_i^{ b_i(\Gamma)} \circ g_{i+1} \circ \ldots \circ g_n \right)^1 \nonumber \\
&= \sum_{0 \leq i \leq n, \Gamma \in \gamma_i} g_i^{b_i(\Gamma)+1} \circ g_{i+1} \circ \ldots \circ g_n \prod_{ l = i+1}^n g_l^1 \circ g_{l+1} \circ \ldots \circ g_n \nonumber \\
&\times \prod_{ 0 \leq p \leq n, \tilde{\Gamma } \in \gamma_p : (p,\tilde{\Gamma}) \neq (i,\Gamma) } g_p^{b_p(\tilde{\Gamma})} \circ g_{p+1} \circ \ldots \circ g_n . \label{bigone}
\end{align}

We now define a bijection 
\begin{align*}
\eta:\{ (i,\Gamma): 0 \leq i \leq n , \Gamma \in \gamma_i \} \to \{ \boldsymbol\zeta \in \Pi^{k+1}_n : \boldsymbol\zeta|^k = \bga \}
\end{align*}
as follows. Let $\eta_j := \eta(i,\Gamma)_j$ be the $j$th partition in the chain $\eta(i,\Gamma)$. Then
\begin{itemize}
\item for each $j \leq i$, $\eta_j$ is formed from $\gamma_j$ by joining $\{k+1\}$ to the block containing $\Gamma$.
\item for each $j \geq i +1$, $\eta_j$ is formed from $\gamma_j$ by adding the singleton set $\{ k + 1 \}$ to $\gamma_j$. 
\end{itemize}
The map $(i,\Gamma) \mapsto \eta(i,\Gamma)$ is clearly injective. Furthermore to see that it is surjective, for any chain $\boldsymbol\zeta \in \Pi^{k+1}_n$ such that $\boldsymbol\zeta|^k = \boldsymbol\gamma$, let $i$ be the largest $0 \leq j \leq n$ such that $k+1$ is contained in a non-singleton block $\Gamma \cup \{k+1\}$ of $\zeta_j$. Then $\boldsymbol\zeta = \eta(i,\Gamma)$.\\

Now consider the fragmentation numbers of the partition $\boldsymbol\eta = \eta(i,\Gamma)$. The fragmentation numbers of $\boldsymbol\eta$ and $\boldsymbol\gamma$ are identical at the levels $p=0,1,\ldots,i-1$. The fragmentation numbers $( b_i(H) : H \in \eta_i )$ at the level $i$ of $\eta = \eta(i,\Gamma)$ are given by replacing the entry $b_i(\Gamma)$ with $b_i(\Gamma) + 1$. Finally, for $p = i+1,\ldots,n$, the fragmentation numbers $(b_p(H) : H \in \eta_p)$ of $\boldsymbol\eta$ at the level $p$ are obtained by adding an entry $1$ to the end of the array $(b_p(\Gamma) : \Gamma \in \gamma_p)$.\\

It follows that
\begin{align} 
& \prod_{ j = 0 }^n \prod_{ H \in \eta(i,\Gamma)_j} g_j^{b_j(H)} \circ g_{j+1} \circ \ldots \circ g_n \nonumber \\
& =  g_i^{b_i(\Gamma)+1} \circ g_{i+1} \circ \ldots \circ g_n \prod_{ l = i+1}^n g_l^1 \circ g_{l+1} \circ \ldots \circ g_n  \prod_{ 0 \leq p \leq n, \tilde{\Gamma } \in \gamma_p : (p,\tilde{\Gamma}) \neq (i,\Gamma) } g_p^{b_p(\tilde{\Gamma})} \circ g_{p+1} \circ \ldots \circ g_n.  \label{identity}
\end{align}
Using \eqref{identity} in the first equality below, and the fact that $\eta$ is a bijection in the second, we have
\begin{align*}
& \sum_{0 \leq i \leq n, \Gamma \in \gamma_i} g_i^{b_i(\Gamma)+1} \circ g_{i+1} \circ \ldots \circ g_n \prod_{ l = i+1}^n g_l^1 \circ g_{l+1} \circ \ldots \circ g_n \nonumber \\
&\times \prod_{ 0 \leq p \leq n, \tilde{\Gamma } \in \gamma_p : (p,\tilde{\Gamma}) \neq (i,\Gamma) } g_p^{b_p(\tilde{\Gamma})} \circ g_{p+1} \circ \ldots \circ g_n\\
&= \sum_{0 \leq i \leq n, \Gamma \in \gamma_i} \prod_{ j = 0 }^n \prod_{ H \in \eta(i,\Gamma)_j } g_j^{b_j(H)} \circ g_{j+1} \circ \ldots \circ g_n\\
&= \sum_{\boldsymbol\zeta \in \Pi^{k+1}_n : \boldsymbol\zeta|^k = \bga } \prod_{ j = 0 }^n \prod_{Z \in \zeta_j } g_j^{b_j(Z)} \circ g_{j+1} \circ \ldots \circ g_n,
\end{align*}
proving the result for $j =1$. \\

Now we prove the general case $j \geq 1$ by induction. Suppose the result holds for all $j' \leq j$. Then by the inductive hypothesis we have
\begin{align} \label{orange}
\left(  \prod_{i=0}^n \prod_{\Gamma \in \gamma_i } g_i^{ b_i(\Gamma)} \circ g_{i+1} \circ \ldots \circ g_n \right)^{j+1} &= \sum_{ \boldsymbol \eta \in \Pi^{k+j}_n : \boldsymbol \eta|^k = \boldsymbol\gamma } \left( \prod_{ i = 0 }^n \prod_{ H \in \eta _i} g_i^{b_i(H)} \circ g_{i+1} \circ \ldots \circ g_n \right)^1.
\end{align}
Using the case $j=1$ for each term in the sum on the right-hand-side of \eqref{orange}, we obtain
\begin{align}  \label{chelsea}
\left(  \prod_{i=0}^n \prod_{\Gamma \in \gamma_i } g_i^{ b_i(\Gamma)} \circ g_{i+1} \circ \ldots \circ g_n \right)^{j+1}  &= \sum_{ \boldsymbol\eta \in \Pi^{k+j}_n : \boldsymbol\eta|^k = \boldsymbol\gamma }~ \sum_{\boldsymbol\theta \in \Pi_n^{k+j+1} : \boldsymbol\theta|^{k+j} = \boldsymbol\eta } \prod_{ i = 0 }^n \prod_{ \Theta \in \theta _i} g_i^{b_i(\Theta)} \circ g_{i+1} \circ \ldots \circ g_n.
\end{align}
Noting we have the disjoint union
\begin{align} \label{fulham}
\cup_{\boldsymbol\eta \in \Pi^{k+j}_n : \boldsymbol\eta|^k = \boldsymbol\gamma } \{\boldsymbol\theta \in \Pi_n^{k+j+1} : \boldsymbol\theta|^{k+j} = \boldsymbol\eta \} = \{\boldsymbol\theta \in \Pi_n^{k+j+1} : \boldsymbol\theta|^{k} = \boldsymbol\gamma  \},
\end{align}
it follows from using \eqref{fulham} in \eqref{chelsea} that
\begin{align*}
\left(  \prod_{i=0}^n \prod_{\Gamma \in \gamma_i } g_i^{ b_i(\Gamma)} \circ g_{i+1} \circ \ldots \circ g_n \right)^{j+1}  = \sum_{ \boldsymbol \theta \in \Pi^{k+j+1}_n : \boldsymbol \theta|^k = \boldsymbol \gamma } \prod_{ i = 0 }^n \prod_{ \Theta \in \theta _i} g_i^{b_i(\Theta)} \circ g_{i+1} \circ \ldots \circ g_n.
\end{align*}
This proves the result holds for $j+1$, and hence by induction, \eqref{faa projection} holds for every $j \geq 1$. 
\ep

\begin{lem}
Let $g_0, \ldots, g_n$ be $k$-times differentiable. Then
\begin{align} \label{fdb gen}
 (g_0  \circ g_1 \circ \ldots \circ g_n )^k = \sum_{ \boldsymbol\gamma \in \Pi^k_n } \prod_{i = 0}^n \prod_{ \Gamma \in \gamma_i } g_i^{ b_i(\Gamma) } \circ g_{i+1} \circ \ldots \circ g_n.
\end{align}
Furthermore, for any semigroup $(F_t)_{t \geq 0}$ of $k$-times differentiable functions, and any mesh $(t_i)_{i \leq n}$ of $[0,T]$, we have
\begin{align} \label{semigroup FdB 2}
 F_T^k = \sum_{ \boldsymbol\gamma \in \Pi^k_n } \prod_{i = 0}^n \prod_{ \Gamma \in \gamma_i } F_{\Delta t_i }^{ b_i(\Gamma)} \circ F_{T - t_{i+1}}.
\end{align}
In particular, for every $s \in [0,1]$ and every chain $\boldsymbol \gamma$ in $\Pi_n^k$ 
\begin{align} \label{semigroup bound}
 F_T^k(s) \geq \prod_{i = 0}^n \prod_{ \Gamma \in \gamma_i } F_{\Delta t_i }^{ b_i(\Gamma)} \left( F_{T - t_{i+1}}(s) \right) \geq 0 .
\end{align}
\end{lem}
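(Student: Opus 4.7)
The plan is to deduce the first identity by reducing it to Lemma \ref{faa projection} applied to the trivial one-spine chain. The starting point is the iterated chain rule: $n$ applications of $(f \circ g)' = (f' \circ g) \cdot g'$ yield
\begin{equation*}
(g_0 \circ g_1 \circ \ldots \circ g_n)^1 = \prod_{i=0}^n g_i^1 \circ g_{i+1} \circ \ldots \circ g_n,
\end{equation*}
which is precisely the right-hand side of \eqref{fdb gen} in the case $k=1$, because $\Pi^1_n$ has a single element (the chain with $\gamma_i = \{\{1\}\}$ for every $i$), and for this chain every breakage number equals $1$.

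The general case $k \geq 2$ I would obtain by taking the $(k-1)$-th derivative of both sides of the displayed identity above. The left-hand side becomes $(g_0 \circ \ldots \circ g_n)^k$. For the right-hand side, I would apply Lemma \ref{faa projection} with the one-chain $\boldsymbol\gamma \in \Pi^1_n$ just described and with $j = k-1$. The constraint $\boldsymbol\eta|^1 = \boldsymbol\gamma$ appearing in Lemma \ref{faa projection} is then vacuous, since projecting any partition of $\{1,\ldots,k\}$ onto $\{1\}$ returns $\{\{1\}\}$, so the sum ranges over all chains $\boldsymbol\eta \in \Pi^k_n$. This produces exactly the right-hand side of \eqref{fdb gen}. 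The only mildly delicate point, and the one worth double-checking, is that the breakage numbers read off from Lemma \ref{faa projection} for the chain $\boldsymbol\eta$ agree with those in \eqref{fdb gen}, but this is immediate from the definition.

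For the semigroup identity \eqref{semigroup FdB 2}, the plan is simply to specialise $g_i := F_{\Delta t_i}$ in \eqref{fdb gen}. Using the semigroup property together with $t_0 = 0$ and $t_{n+1} = T$, one has $g_{i+1} \circ \ldots \circ g_n = F_{\Delta t_{i+1} + \ldots + \Delta t_n} = F_{T - t_{i+1}}$, and in particular $g_0 \circ g_1 \circ \ldots \circ g_n = F_T$. Substituting these into \eqref{fdb gen} delivers \eqref{semigroup FdB 2}.

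Finally, the bound \eqref{semigroup bound} is by non-negativity. Each $F_t(s) = \E[s^{N_t}]$ is a probability generating function, so every derivative $F_t^j$ has non-negative power series coefficients and is therefore non-negative on $[0,1]$. Hence each term in the sum \eqref{semigroup FdB 2} is non-negative, so each individual term is bounded above by the full sum $F_T^k(s)$, which gives \eqref{semigroup bound}.
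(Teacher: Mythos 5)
Your proposal is correct and follows essentially the same route as the paper: the $k=1$ case via the chain rule, then Lemma \ref{faa projection} applied to the trivial chain in $\Pi^1_n$ with $j=k-1$ (the projection constraint being vacuous), the substitution $g_i := F_{\Delta t_i}$ for \eqref{semigroup FdB 2}, and non-negativity of the derivatives of the probability generating functions on $[0,1]$ for \eqref{semigroup bound}. No gaps to report.
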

\bp
To see that \eqref{fdb gen} is true for $k=1$, we note that $\Pi^1_n$ contains the single chain $\boldsymbol\gamma^* =( \gamma_1,\ldots,\gamma_n)$ where $\gamma_i = \{ \{ 1 \} \}$ for each $i$, the breakage number of the single block $\{1\}$ in each partition is simply $1$, and the result holds by comparison with the chain rule. For general $k$, by using the case $k=1$ to obtain the first equality below and \eqref{faa projection} to obtain the second, we have
\begin{align*}
\left( g_0 \circ \ldots \circ g_n \right)^k &= \left( \prod_{i=0}^n \prod_{\Gamma \in \gamma^*_i} g_i^{b_i(\Gamma)} \circ g_{i+1} \circ \ldots \circ g_n  \right)^{k-1}\\
&= \sum_{  \boldsymbol\eta \in \Pi_n^k :  \boldsymbol\eta|^1 = \boldsymbol\gamma^* } \prod_{i=0}^n \prod_{H \in \eta_i} g_i^{b_i(H)} \circ g_{i+1} \circ \ldots \circ g_n  \\
&= \sum_{ \boldsymbol\eta \in \Pi_n^k } \prod_{i=0}^n \prod_{H \in \eta_i} g_i^{b_i(H)} \circ g_{i+1} \circ \ldots \circ g_n  ,
\end{align*}
where the final equality holds from the fact that \emph{every} chain $ \boldsymbol\eta $ in $ \Pi_n^k $ satisfies  $\boldsymbol\eta|^1 = \boldsymbol\gamma^*$.\\

Now we prove \eqref{semigroup FdB 2}. If $0 \leq t_1 \leq \ldots \leq t_n \leq T$ is a mesh of $[0,T]$, and $(F_t)_{t \geq 0}$ is a semigroup, then we obtain \eqref{semigroup FdB 2} by setting $g_i := F_{\Delta t_i}$, and noting 
\begin{align*}
g_{i+1} \circ \ldots \circ g_n = F_{\Delta t_{i+1}} \circ \ldots \circ F_{\Delta t_n} = F_{ \Delta t_{i+1} + \ldots + \Delta t_n } = F_{T-t_{i+1}}.
\end{align*}
Finally, to prove \eqref{semigroup bound}, note that for every $s \in [0,1]$, $F_t(s) \in [0,1]$, and for every $j \geq 1$, $F^j_t(s) \geq 0$. It follows that for all $j\geq 1$, and $s \in [0,1]$, $F_{t_1}^j( F_{t_2}(s)) \geq 0$ for all $t_1, t_2 \geq 0$. This shows that the summands in \eqref{semigroup FdB 2} are non-negative, and \eqref{semigroup bound} follows.
\ep

\subsection{Proof of Theorem \ref{MT FDD} without moment assumption} \label{no moment}
In this section we prove that we can relax the assumption that $\E[N_t^{(k)}] < \infty$ for each $t$ and Theorem \ref{MT FDD} continues to hold. In order to use a coupling argument in lifting this condition, first we require the following result.\\

Given a Galton-Watson tree, we call $(N_t)_{t \geq 0}$ the \emph{process} associated with the tree, where $N_t$ is the number of particles in the tree at time $t$.
\begin{lem} \label{AncBounded}
Let $\bar{N}_t = | \bigcup_{s \in [0,t]} \mathcal{N}_s |$ be the number of particles who have ever lived up until time $t$. Then, provided the non-explosion hypothesis \eqref{Non-exp} holds, $\P( \bar{N}_t < \infty ) = 1$. 
\bp
Suppose we have a continuous-time Galton-Watson process $N := (N_t)_{t \geq 0}$ with offspring generating function $f(s) = \E[s^L] = p_0 + p_1 s + s^2g(s)$ satisfying the non-explosion hypothesis. Couple $N$ with another process $M := (M_t)_{t \geq 0}$ with generating function $f^*(s) = (p_0 + p_1 + g(s))s^2$ as follows. Every time an particle in the process $N$ has $0$ or $1$ children, the corresponding particle in the process $M$ has $2$ children. Writing $\bar{M}_t$ for the number who have ever lived until $t$ in the $M$-process, clearly $\P( \bar{N}_t \leq \bar{M}_t ) = 1$, and it is straightforward to verify that  $f^*(s)$ also satisfies the non-explosion hypothesis, and hence $M_t$ is almost surely finite.
\\
\\Consider in the process $M$ that every particle is replaced by at least two particles upon death, and hence there were at most $\frac{1}{2}M_t$ parents of particles alive at time $t$. A similar argument says that there can have been at most $\frac{1}{4} M_t$ grandparents, and so forth. It follows that the we can bound above the number who have ever lived: $\bar{M}_t \leq \sum_{i \geq 0} 2^{-i} M_t = 2M_t$. 
\\
\\Since $2M_t \geq \bar{M}_t \geq \bar{N}_t$, the latter quantity is almost surely finite.
\ep
\end{lem}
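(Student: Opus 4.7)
The plan is to dominate $\bar N_t$ by the corresponding quantity $\bar M_t$ for a modified Galton--Watson process in which every particle has at least two offspring, and then control $\bar M_t$ by $M_t$ via a generation-counting argument. Hypothesis \eqref{Non-exp} already yields $N_t < \infty$ almost surely, but $N_t$ only counts particles currently alive, so this alone is insufficient.

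Concretely, if $f(s) = p_0 + p_1 s + \sum_{k \ge 2} p_k s^k$, I would introduce the modified generating function
\[
f^*(s) = (p_0 + p_1) s^2 + \sum_{k \ge 2} p_k s^k,
\]
obtained by moving the atoms at $0$ and $1$ up to $2$. Let $M$ be a Galton--Watson process with offspring generating function $f^*$, coupled to $N$ in such a way that whenever an $N$-particle produces $0$ or $1$ offspring the paired $M$-particle produces exactly $2$, and otherwise the two paired particles produce the same number. Under this coupling the $N$-tree embeds in the $M$-tree, so $\bar N_t \le \bar M_t$ almost surely.

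The key technical step is to verify that $f^*$ itself satisfies \eqref{Non-exp}. If $f'(1) < \infty$ then $(f^*)'(1) = f'(1) + 2p_0 + p_1 < \infty$ and non-explosion is automatic. If $f'(1) = \infty$, an elementary comparison using the fact that $f^*(s) - f(s) = (s-1)\bigl(p_0(s+1) + p_1 s\bigr)$ is only $O(1-s)$ near $s=1$, while $|f(s)-s|$ grows at least as fast as a constant multiple of $(1-s)$ there, shows that $|f^*(s)-s|$ and $|f(s)-s|$ are comparable as $s \uparrow 1$, so the divergence of $\int_{1-\eps}^{1} ds/|f(s)-s|$ transfers to $f^*$. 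This case analysis is the most delicate part of the argument; the remaining cases are trivial.

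With $M_t < \infty$ almost surely in hand, the conclusion follows from a short counting argument: every particle in the $M$-process has at least two offspring, so the parents of the $M_t$ particles alive at time $t$ number at most $M_t/2$, their grandparents at most $M_t/4$, and so on. Summing the geometric series yields $\bar M_t \le 2 M_t$, and hence $\bar N_t \le \bar M_t \le 2 M_t < \infty$ almost surely.
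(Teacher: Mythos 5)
Your proposal is correct and follows essentially the same route as the paper: the same modified offspring law $f^*(s) = (p_0+p_1)s^2 + \sum_{k\ge2}p_k s^k$ (identical to the paper's $(p_0+p_1+g(s))s^2$), the same coupling giving $\bar N_t \le \bar M_t$, and the same ancestor-counting bound $\bar M_t \le 2M_t$. The only difference is that you spell out the verification that $f^*$ inherits the non-explosion condition (splitting on whether $f'(1)$ is finite), a step the paper dismisses as straightforward; your argument there is sound, since $f'(1)=\infty$ forces $|f(s)-s|\ge(1-s)$ near $1$ while $|f^*(s)-f(s)|=O(1-s)$.
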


The following lemma, a variant of the dominated convergence theorem, will be used in the proofs of Theorem \ref{MT FDD}, Theorem \ref{MTSuper} and Theorem \ref{MTSub}. 
\begin{lem} \label{DOM}
Let $g, (g_n)$, and $h, (h_n)$ be measurable functions on a probability space $(\Omega,\mathcal{A},\mu)$, with $|g_n| \leq h_n$ for all $n$, and such that $g_n \to g$, $h_n \to h$, and $\mu h_n \to \mu h$. Then $\mu g_n \to \mu g$. 
\bp See \cite[Theorem 1.21]{Kal97}.
\ep
\end{lem}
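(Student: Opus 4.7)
The plan is to prove this generalized dominated convergence theorem (sometimes called Pratt's lemma) by a symmetric application of Fatou's lemma to two non-negative sequences built out of $g_n$ and $h_n$. First I note that the hypotheses $|g_n|\le h_n$ force $h_n\ge 0$, and taking pointwise limits gives $h\ge 0$ and $|g|\le h$, so in particular $g$ and each $g_n$ are $\mu$-integrable (once we confirm $\mu h<\infty$, which is inherited from $\mu h_n\to\mu h$ when the common limit is finite; the infinite case is handled trivially or can be reduced to the finite case by a standard truncation).

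The key step is to observe that $h_n+g_n\ge 0$ and $h_n-g_n\ge 0$ for every $n$, and that these sequences converge pointwise to $h+g$ and $h-g$ respectively. Applying Fatou's lemma to each yields
\begin{align*}
\mu(h+g) &\le \liminf_n \mu(h_n+g_n) = \mu h + \liminf_n \mu g_n,\\
\mu(h-g) &\le \liminf_n \mu(h_n-g_n) = \mu h - \limsup_n \mu g_n,
\end{align*}
where in each case I have used the hypothesis $\mu h_n\to\mu h$ to pull that term out of the $\liminf$. Subtracting $\mu h$ from both inequalities (legitimate because it is finite) gives $\mu g\le\liminf_n\mu g_n$ and $\limsup_n\mu g_n\le\mu g$, which together imply convergence $\mu g_n\to\mu g$.

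There is essentially no obstacle beyond taking care with finiteness of $\mu h$: if $\mu h=\infty$, then the subtraction step is invalid, and one should instead invoke the standard dominated convergence theorem against the eventually-dominating function $\sup_n h_n$ inside a compactness/tightness argument, or note that the result is applied in the paper only in situations where $\mu h<\infty$. Modulo this bookkeeping, the whole proof is two lines of Fatou. I would therefore present the argument essentially as the two displayed Fatou inequalities above, flanked by a short remark justifying the finiteness of $\mu h$ in the generality required for the subsequent applications to Theorems \ref{MT FDD}, \ref{MTSuper} and \ref{MTSub}.
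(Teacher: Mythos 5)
Your argument is correct, but note that the paper does not prove this lemma at all: it simply cites \cite[Theorem 1.21]{Kal97}, where this extended dominated convergence result (Pratt's lemma) is stated and proved. What you have written is essentially the textbook proof of that cited theorem — the symmetric Fatou argument applied to $h_n+g_n\geq 0$ and $h_n-g_n\geq 0$, using $\mu h_n\to\mu h$ to split the $\liminf$, then cancelling the finite quantity $\mu h$. So your route is self-contained where the paper's is a reference, and both are fine. One point you handle correctly and which deserves emphasis: the lemma as stated in the paper omits the hypothesis $\mu h<\infty$, without which the conclusion is actually false (take $h_n=h=1/x$ on $(0,1)$ and $g_n=n\ind_{(0,1/n)}$, so $g_n\to 0$ pointwise, $|g_n|\leq h$, yet $\mu g_n=1\not\to 0$); Kallenberg's statement includes this finiteness condition. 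Your observation that the subtraction step requires $\mu h<\infty$, and that in every application in the paper ($H^T$ and $H$ are probability densities, so $\int H^T=\int H=1$) this is satisfied, is exactly the right way to close that gap — your parenthetical suggestion about invoking dominated convergence against $\sup_n h_n$ in the infinite case is unnecessary and not obviously workable, and is best dropped in favour of simply adding $\mu h<\infty$ to the hypotheses.
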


We are now ready to prove that Theorem \ref{MT FDD} holds for every offspring distribution with generating function satisfying hypotheses \eqref{Non-triv} and \eqref{Non-exp}.

\bp[Proof of Theorem \ref{MT FDD}]
Our proof idea as follows. To calculate the distribution of $(\pi^{k,L,T}_t)_{t \in [0,T]}$, first calculate $(\pi^{k,L,T}_{n,t})_{t \in [0,T]}$ from a tree where offspring sizes are bounded by $n$. Any such tree clearly satisfies $\E[L^k] < \infty$, and therefore (\cite[Section III.6]{AN72}) $\E[N_t^{(k)}] < \infty$ for every $t$ and the formula \eqref{EqFDD2} applies. Then we send $n \to  \infty$, showing the formula \eqref{EqFDD2} converge suitably. 
\\
\\Let $L$ be a random variable, and let $\mathsf{Tree}_T$ be the continuous-time Galton-Watson tree run until time $T$ and $(N_t)_{t \in [0,T]}$ be the corresponding process for the number of particles alive. We couple the tree $\mathsf{Tree}_T$ with a tree with bounded branching as follows. Let $\mathsf{Tree}_{n,T}$ be the tree with offspring distribution $L \ind_{ L \leq n}$ taken by replacing any birth of size greater than $n$ $\mathsf{Tree}_T$ with a birth of size zero, and let $(N_{n,t})_{t \in [0,T]}$ be the associated process for the number of particles alive.\\

By Lemma \ref{AncBounded}, $\P(\bar{N}_t < \infty) = 1$, and thus $\P( \bar{N}_t \leq n ) \uparrow 1$ as $n \to \infty$. Note that $\{ \bar{N}_t \leq n \}$ ensures $\{ \mathsf{Tree}_{n,T} = \mathsf{Tree}_{T} \}$, since if at most $n$ particles have ever lived, no particle ever had more than $n$ offspring. It follows that $\P( \mathsf{Tree}_{n,T} = \mathsf{Tree}_{T} ) \uparrow 1$ as $n \to \infty$. In particular, $N_{n,t} \to N_t$ almost surely and hence $\P(N_{n,T} \geq k ) \to \P(N_{T} \geq k )$. 
\\
\\If we pick $k$ particles from $\mathsf{Tree}_{n,T}$ and call the partition process $(\pi_{n,t}^{k,L,T})_{t \in [0,T]}$, it follows that $(\pi_{n,t}^{k,L,T})_{ t \in [0,T]}$ converges in distribution to $(\pi^{k,L,T}_t)_{t \in [0,T]}$ as $n \to \infty$, since the partition processes correspond to subtrees of the trees $\mathsf{Tree}_{n,T}$ and $ \mathsf{Tree}_{T}$ respectively. 
\\
\\It remains to check that for a process $(N_{n,t})_{t \geq 0 }$ with offspring distributed like $L\ind_{L \leq n}$ and generating function $F_{n,t}(s)$, that as $n \uparrow \infty$,
\begin{align} \label{TreeConv}
\int_0^1 \frac{(1-s)^{k-1}}{(k-1)! \P(N_{n,T} \geq k ) }  \prod_{i = 0}^{n} \prod_{\Gamma \in \gamma_i } F_{n,\Delta t_i }^{b_i(\Gamma) }\Big( F_{n,T- t_{i+1}}(s) \Big) ds \\ \to  \int_0^1  \frac{(1-s)^{k-1}}{(k-1)! \P(N_T \geq k ) } \prod_{i = 0}^{n} \prod_{\Gamma \in \gamma_i } F_{\Delta t_i }^{b_i(\Gamma)}\Big( F_{T- t_{i+1}}(s) \Big) ds. \label{TreeConv2}
\end{align}
First, let us establish that for all $(j,t,s)$, that as $n \to \infty$ $F^j_{n,t}(s) \to F^j_{t}(s)$. Now $N_{n,t} \uparrow N_t$ almost surely. If $s = 1$, $F^{j}_{n,t}(1) = \E[ N_{n,t}^{(j)}] \uparrow \E[ N_t^{(j)} ] = F^j_t(1)$ by the monotone convergence theorem. If $s < 1$, then the function $n \mapsto n^{(j)} s^{n-j}$ is bounded for $n \in \{0,1,2,\ldots\}$, and hence $F^{j}_{n,t}(s) = \E [ N^{(j)}_{n,t}s^{N_{n,t} - j } ] \to \E[  N^{(j)}_t s^{N_t - j} ] = F^j_t(s)$ by the bounded convergence theorem.
\\
\\To see the convergence of \eqref{TreeConv} to \eqref{TreeConv2}, by \eqref{semigroup bound} we have the domination relation
\begin{align} 
H(s) : = \frac{ (1-s)^{k-1} F_T^k(s)}{  (k-1)!  \P(N_T \geq k )}   \geq  \frac{ (1-s)^{k-1} \prod_{i = 0}^{n} \prod_{\Gamma \in \gamma_i } F_{\Delta t_i }^{b_i(\Gamma) }\Big( F_{T- t_{i+1}}(s) \Big)  }{  (k-1)!  \P(N_T \geq k )}  =: G(s) \geq 0.
\end{align}
Similarly, 
\begin{align} 
H_n(s) := \frac{ (1-s)^{k-1} F_{n,T}^k(s)}{ (k-1)! \P(N_{n,T} \geq k )}   \geq  \frac{ (1-s)^{k-1} \prod_{i = 0}^{n} \prod_{\Gamma \in \gamma_i } F_{n,\Delta t_i }^{b_i(\Gamma)}\Big( F_{n,T- t_{i+1}}(s) \Big)  }{  (k-1)!  \P(N_{n,T} \geq k )}  =: G_n(s) \geq 0.
\end{align}
By our assumption $\P( N_T \geq k ) > 0$, and hence by setting $X=1, N = N_T$ in Lemma \ref{Betalem}, for every $n$, $\int_0^1 H_n(s) ds = 1 =  \int_0^1 H(s) ds$. Trivially, $\int_0^1 H_n(s) ds \to \int_0^1 H(s) ds$. So $G_n(s) \to G(s)$ pointwise, $H_n(s) \to H(s)$ pointwise, $\int_0^1 H_n(s) ds \to \int_0^1 H(s) ds$, and $H_n(s) \geq G_n(s) \geq 0$. It follows by Lemma \ref{DOM} that $\int_0^1 G_n(s) ds \to \int_0^1 G(s) ds$.
\ep

\subsection{Proof of Theorem \ref{mixture markov}} \label{SecMixture}

In this section we will prove Theorem \ref{mixture markov}, which states that $(\pi^{k,L,T}_t)_{t \in [0,T]}$ has a representation in terms of a mixture of Markov processes.

\bp[Proof of Theorem \ref{mixture markov}]
In light of Theorem \ref{MT FDD} we may write
\begin{align*} 
\P( \pi^{k,L,T}_{t_1} = \gamma_1, \ldots, \pi^{k,L,T}_{t_n} = \gamma_n ~ | ~ N_T \geq k ) = \int_0^1 m^{k,L,T}(ds) R_{t_1,\ldots,t_n}(ds),
\end{align*}
where
\begin{align} \label{R fdd 2}
R^{k,L,T}_{t_1,\ldots,t_n}( s, \boldsymbol\gamma ) := \frac{ \prod_{i = 0}^n \prod_{ \Gamma \in \gamma_i } F_{\Delta t_i }^{ b_i(\Gamma)} \left(  F_{T - t_{i+1}}(s) \right) }{ F_T^k(s) } ,
\end{align}
and 
\begin{align} \label{R measure}
m^{k,L,T}(ds) :=  \frac{(1-s)^{k-1} F_T^k(s) }{(k-1)!  \P(N_T \geq k) } ds.
\end{align}
First of all, we remark that setting $X = 1$ and $N = N_T$ in Lemma \ref{Betalem} establishes that $m^{k,L,T}(ds)$ is a probability measure on $[0,1]$. Furthermore, $ R^{k,L,T}_{t_1,\ldots,t_n}( s, \cdot) $ is also a probability measure on $\Pi^k_n$. To see this, note that by \eqref{semigroup bound} that $R^{k,L,T}_{t_1,\ldots,t_n}( s, \boldsymbol\gamma ) \geq  0$ for all $\bga \in \Pi_n^k$ and using \eqref{semigroup FdB 2} it follows that
\begin{align*}
\sum_{ \bga \in \Pi^k_n } R^{k,L,T}_{t_1,\ldots,t_n}( s, \boldsymbol\gamma )  =  1.
\end{align*}
It remains to show that $R^{k,L,T}_{t_1,\ldots,t_n}( s, \cdot)$ are the finite dimensional distributions of a Markov process $(\tpi_t)_{t \in [0,T]}$ satisfying the independent blocks property and with transition density given by \eqref{tpi rates}.\\

Now suppose $\bga := (\gamma_1,\ldots,\gamma_n)$  and $\bga' := ( \gamma_1,\ldots,\gamma_n,\gamma_{n+1},\ldots,\gamma_{n+m})$ are partition chains, and $t_1 < \ldots < t_{n+m}$ are times, then
\begin{align}
\frac{ R^{k,L,T}_{t_1,\ldots,t_{n+m}}( s, \boldsymbol\gamma' ) }{ R^{k,L,T}_{t_1,\ldots,t_n}( s, \boldsymbol\gamma )} &= \frac{ \prod_{ i= n}^m \prod_{ \Gamma \in \gamma_i} F_{\Delta t_i }^{ b_i(\Gamma)}  F_{T - t_{i+1}}(s)  }{ \prod_{ \Gamma \in \gamma_n } F_{T-t_n}^{|\Gamma|} (s) } \nonumber\\
&= \prod_{ \Gamma \in \gamma_n } \frac{ \prod_{ i= n}^m \prod_{ \Gamma' \in \gamma^{\Gamma}_i} F_{\Delta t_i }^{ b_i(\Gamma')}  F_{T - t_{i+1}}(s) }{ F_{T-t_n}^{|\Gamma|} (s)} . \label{chopin product}
\end{align}
Since this expression only depends on $(\gamma_1,\ldots,\gamma_n)$ through the partition $\gamma_n$, $R^{k,L,T}_{t_1,\ldots,t_n}( s, \cdot)$ are the finite dimensional distributions of a Markov process. Additionally, since this expression factorises over $\Gamma \in \gamma_n$, this Markov process satisfies the independent blocks property.\\

Finally, we need to show the transition density of $(\tilde{\pi}_t)_{t \in [0,T]}$ is given by \eqref{tpi rates}. Given $\{ \tpi_{t} = \gamma \}$ and a block $\Gamma \in \gamma$, we want to obtain the conditional probability that $\{ \pi_{t'}^\Gamma = \delta \}$ for a later time $t < t' < T$. To this end note that by setting $m = n+1$ in \eqref{chopin product}, and letting $\delta$ be a partition of $\Gamma \in \gamma_n$, we see that this conditional probability is equal to
\begin{align*}
 \frac{ F_{t_{n+1}-t_n}^{|\delta|} ( F_{T-t_{n+1}}(s)) \prod_{ \Delta \in \delta } F_{T-t_{n+1}}^{|\Delta|}(s) }{ F_{T-t_n}^{|\Gamma|}(s) },
\end{align*} 
establishing \eqref{tpi rates}. 
\ep

\subsection{Proof of the split time representation, Theorem \ref{MT Split}}
\bp[Proof of Theorem \ref{MT Split}]
Let $u_1 < \ldots < u_{n}$ be times in $[0,T]$ and let $h_1, \ldots, h_n$ be small positive reals. First we consider the small-$(h_i)$ asymptotics of the quantity
\begin{align*}
\R^{k,L,T}_s ( \tilde{\pi}_{u_i} = \eta_{i-1}, \tilde{\pi}_{u_i + h_i} = \eta_i, ~~~ \forall i = 1,\ldots,n).
\end{align*}
For $i = 1,\ldots,n$, let $\boldsymbol\gamma = (\gamma_1, \ldots, \gamma_{2n})$ be given by $\gamma_{2i - 1} = \eta_{i-1}$ and $\gamma_{2i} = \eta_i$, and let $t_{2i-1} = u_i$ and $t_{2i} = u_i + h_i$. Then
\begin{align*}
\R^{k,L,T}_s ( \tilde{\pi}_{u_i} = \eta_{i-1}, \tilde{\pi}_{u_i + h_i} = \eta_i, ~~~ \forall i = 1,\ldots,n) = R^{k,L,T}_{t_1,\ldots,t_{2n}}(s, \gamma) .
\end{align*}
Letting $u_0 = 0, h_0 = 0, u_{n+1} = T$, using \eqref{R fdd 2}, we have
\begin{align}
R^{k,L,T}_{t_1,\ldots,t_{2n}}(s, \boldsymbol\gamma)  = \frac{1}{ F_T^k(s)}    \prod_{i=0}^n F'_{u_{i+1} - (u_i + h_i) } (F_{T-u_{i+1}}(s))^{|\eta_i|} \nonumber \\
\times \prod_{ i=1}^n F_{h_i}^{q_i}( F_{T-(u_i + h_i)} (s)) F_{h_i}'(F_{T-(u_i+h_i)}(s))^{|\eta_i|-1}  .
\end{align}
It is straightforward to verify by the definition of $F_t(s)$ that for small $h$,
\begin{align} \label{small h}
F_h^j(s) =  h f^j(s) + o(h), j \geq 2, ~~~F'_h(s) = 1 + o(1).
\end{align}
Using \eqref{small h} we obtain
\begin{align*}
\lim_{ h_i\downarrow 0} \frac{1}{h_1\ldots h_n} R^{k,L,T}_{t_1,\ldots,t_{2n}}(s, \boldsymbol\gamma)   = \frac{ \prod_{i=0}^n F'_{u_{i+1} - u_i } (F_{T-u_{i+1}}(s))^{|\eta_i|} \prod_{ i=1}^n f^{q_i}( F_{T-u_i} (s)) }{ F_T^k(s)}  .
\end{align*}
Finally, note by the semigroup identity that $F_t'(F_{T-t}(s)) = F_T'(s)/F_{T-t}'(s)$, and that $|\eta_i| - |\eta_{i-1}| = q_i - 1$, and hence
\begin{align*}
\prod_{i=0}^n F_{\Delta u_i}' (F_{T-u_{i+1}}(s))^{|\eta_i|} = F_T'(s) \prod_{ i = 1}^n F_{T-u_i}'(s)^{q_i - 1},
\end{align*}
and hence
\begin{align*}
\lim_{ h_i\downarrow 0} \frac{1}{h_1\ldots h_n} R^{k,L,T}_{t_1,\ldots,t_{2n}}(s, \boldsymbol\gamma)   = \frac{  F_T'(s)\prod_{ i=1}^n f^{q_i}( F_{T-u_i} (s)) F_{T-u_i}'(s)^{q_i - 1} }{ F_T^k(s)}  .
\end{align*}
It follows that for any collection of time intervals $[a_1,b_1],\ldots,[a_n,b_n]$ such that $b_i < a_{i+1}$, 
\begin{align} \label{integral formula}
& \mathbb{R}^{k,L,T}_s( \tpi_{a_i} = \eta_{i-1}, \tpi_{b_i} = \eta_{i} ~\forall~ i = 1,\ldots,n)\\ 
&= \int_{a_1}^{b_1} \ldots \int_{a_n}^{b_n} du_1 \ldots du_n  \frac{  F_T'(s)\prod_{ i=1}^n f^{q_i}( F_{T-u_i} (s)) F_{T-u_i}'(s)^{q_i - 1} }{ F_T^k(s)}.
\end{align}
Now by Theorem \ref{mixture markov}, conditional on the event $\{N_T \geq k \}$, $(\pi^{k,L,T}_t)_{t \in [0,T]}$ has law $\int_0^1 m^{k,L,T}(ds) \mathbb{R}_s^{k,L,T}$. Using this fact in conjunction with \eqref{integral formula} (and Fubini's theorem in the final equality) we have
\begin{align*}
& \P( \pi^{k,L,T}_{a_i} = \eta_{i-1}, \pi^{k,L,T}_{b_i} = \eta_{i} ~\forall~ i = 1,\ldots,n ~ | ~ N_T \geq k )\\ 
&= \int_0^1 m^{k,L,T}(ds) \int_{a_1}^{b_1} \ldots \int_{a_n}^{b_n} du_1 \ldots du_n  \frac{  F_T'(s)\prod_{ i=1}^n f^{q_i}( F_{T-u_i} (s)) F_{T-u_i}'(s)^{q_i - 1} }{ F_T^k(s)}\\
&= \int_{a_1}^{b_1} \ldots \int_{a_n}^{b_n} du_1 \ldots du_n \int_0^1 \frac{ (1-s)^{k-1}}{ (k-1)! \P(N_T \geq k ) } F_T'(s)\prod_{ i=1}^n f^{q_i}( F_{T-u_i} (s)) F_{T-u_i}'(s)^{q_i - 1}  ds.
\end{align*}

\ep

\subsection{Proof of the projective extension, Theorem \ref{MT projection}}

Below we will write $(\pi^k_t)_{t \in [0,T]} := (\pi^{k,L,T}_t)_{t \in [0,T]}$.

\bp[Proof of Theorem \ref{MT projection}]

If on the event $\{ N_T \geq k+j \}$, $k+j$ distinct particles $U_1,\ldots,U_{k+j}$ are chosen uniformly from those alive at time $T$, then the first $k$ of them $U_1,\ldots,U_k$ represent a uniformly chosen sample of $k$-distinct particles. It follows by definition that on the event $\{N_T \geq k+j\}$ that $\left(\pi^{k+j}_t\big|^k\right)_{t \in [0,T]}$ and $(\pi^k_t)_{t \in [0,T]}$ are identical in law.\\

It remains to compute the law of the stochastic process $\left(\pi^{k+j}_t\big|^k\right)_{t \in [0,T]}$. Note we can write
\begin{align*}
&\P( \pi^{k+j}_{t_1}|^k = \gamma_1, \ldots,  \pi^{k+j}_{t_n}|^k = \gamma_n, ~ N_T \geq k + j )\\ &= \sum_{ \boldsymbol\eta \in \Pi^{k+j}_n : \boldsymbol\eta|^k = \boldsymbol\gamma }  \P( \pi^{k+j}_{t_1} = \eta_1, \ldots, \pi^{k+j}_{t_n} = \eta_n ,~ N_T \geq k+j) 
\\ &=  \sum_{ \boldsymbol\eta \in \Pi^{k+j}_n : \boldsymbol \eta|^k = \boldsymbol \gamma } \int_0^1 \frac{ (1-s)^{k+j-1}}{(k+j-1)!} \prod_{ i = 0}^n \prod_{ H \in \eta_i} F_{\Delta t_i}^{ b_i(H)} \left( F_{T-t_{i+1}}(s) \right) ds
\\ &=  \int_0^1 \frac{ (1-s)^{k+j-1}}{(k+j-1)!} \sum_{ \boldsymbol\eta \in \Pi^{k+j}_n :\boldsymbol \eta|^k = \boldsymbol\gamma }  \prod_{ i = 0}^n \prod_{ H \in \eta_i} F_{\Delta t_i}^{ b_i(H)} \left( F_{T-t_{i+1}}(s) \right) ds.
\end{align*}
Setting $g_i := F_{\Delta t_i}$ in Lemma \ref{faa projection}, we have
\begin{align*}
\sum_{ \boldsymbol\eta \in \Pi^{k+j}_n : \boldsymbol\eta|^k = \boldsymbol\gamma }  \prod_{ i = 0}^n \prod_{ H \in \eta_i} F_{\Delta t_i}^{ b_i(H)} \left( F_{T-t_{i+1}}(s) \right) ds = \frac{ \partial^j}{ \partial s^{j}} \left( \prod_{ i = 0}^n \prod_{ \Gamma \in \gamma_i} F_{\Delta t_i}^{ b_i(\Gamma)} \left( F_{T-t_{i+1}}(s) \right) \right),
\end{align*}
and the result follows.
\ep

\section{Proofs of asymptotic-$T$ results} \label{Sec6}

\subsection{Supercritical asymptotics}

When $m = \E[L] > 1$ and the Kesten-Stigum condition $\E[L \log_+ L ] < \infty$ holds, the unit-mean and non-negative martingale $W_t := N_t e^{ - (m-1)t}$ converges to almost-surely to a non-degenerate limit $W := W_\infty$, with the properties that $\E[W] = 1$ and 
\begin{align} \label{W pos}
\{ W > 0 \} = \{ N_t > 0 ~ \forall ~ t \} ~~ \text{almost surely}.
\end{align}
Defining $\varphi:[0,\infty) \to [0,1]$ by $\varphi(v) := \E[ e^{ - v W} ]$, we note that by Fubini's theorem,
\begin{align} \label{W fub}
\E[ W^k e^{ - vW } ] = (-1)^k \varphi^{k}(v).
\end{align}

The following lemma gives us the scaling limit of the generating function derivatives under the change of variable $s = e^{- v e^{ - (m-1)T} }$. 

\begin{lem} \label{tech lemma}
For every $v>0$, $t \geq 0$, and non-negative integer $k$,
\begin{align*}
\lim_{T \to \infty} e^{ - k(m-1)T} F_{T-t}^{k}( e^{ - v e^{ - (m-1)T} }) = (-1)^k e^{ - k(m-1)t} \varphi^k(v e^{ - (m-1)t} ) .
\end{align*}
\end{lem}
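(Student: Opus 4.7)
The plan is to rewrite $F^k_{T-t}(s) = \E[N_{T-t}^{(k)} s^{N_{T-t}-k}]$ as an expectation and apply dominated convergence under the substitution $s = s_T := e^{-v e^{-(m-1)T}}$. The key observation enabling everything is that with this choice,
$$s_T^{N_{T-t}} = \exp\!\left(-v e^{-(m-1)T} N_{T-t}\right) = \exp\!\left(-v e^{-(m-1)t} W_{T-t}\right),$$
where $W_{T-t} := N_{T-t} e^{-(m-1)(T-t)}$ is the Kesten-Stigum martingale. Writing
$$X_T := e^{-k(m-1)T} N_{T-t}^{(k)} s_T^{N_{T-t}-k} = e^{-k(m-1)t} \cdot e^{-k(m-1)(T-t)} N_{T-t}^{(k)} \cdot s_T^{N_{T-t}} \cdot s_T^{-k},$$
I would first establish the almost-sure limit
$$X_T \longrightarrow e^{-k(m-1)t}\, W^k\, e^{-v e^{-(m-1)t} W} \quad \text{a.s.}$$
by combining (i) $W_{T-t} \to W$ a.s., (ii) on $\{W>0\}$ one has $N_{T-t} \to \infty$ and hence $N_{T-t}^{(k)}/N_{T-t}^k \to 1$, while on $\{W = 0\}$ the tree eventually dies out by \eqref{W pos} so $X_T$ vanishes, and (iii) $s_T^{-k} \to 1$.

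To upgrade this to convergence in expectation I would invoke the elementary calculus bound $\sup_{x \geq 0} x^k e^{-\lambda x} = (k/(e\lambda))^k$. Taking $\lambda = -\log s_T = v e^{-(m-1)T}$ gives
$$e^{-k(m-1)T} N_{T-t}^k s_T^{N_{T-t}} \leq \left(\frac{k}{ev}\right)^{k},$$
which, multiplied by $s_T^{-k} \to 1$, provides a uniform constant bound on $X_T$ for all sufficiently large $T$. Dominated convergence then yields
$$\lim_{T \to \infty} e^{-k(m-1)T} F^k_{T-t}(s_T) = e^{-k(m-1)t}\, \E\!\left[W^k e^{-v e^{-(m-1)t} W}\right].$$

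The last step is to identify this with $(-1)^k e^{-k(m-1)t} \varphi^k(v e^{-(m-1)t})$ by differentiating $\varphi(u) = \E[e^{-uW}]$ under the expectation $k$ times. The same $x^j e^{-\lambda x}$ bound supplies an integrable dominator on any compact subinterval of $u \in (0,\infty)$, so this differentiation is legitimate and gives $\varphi^{(j)}(u) = (-1)^j \E[W^j e^{-uW}]$ for every $j \leq k$. The only real obstacle is producing the uniform bound needed for dominated convergence; once the inequality $x^k e^{-\lambda x} \leq (k/(e\lambda))^k$ is in hand, every remaining step is routine.
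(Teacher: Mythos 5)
Your argument is correct and follows essentially the same route as the paper: rewrite $e^{-k(m-1)T}F^k_{T-t}(e^{-ve^{-(m-1)T}})$ as an expectation of a function of the Kesten--Stigum martingale $W_{T-t}$, deduce almost-sure convergence from $W_{T-t}\to W$ (the paper packages this as uniform convergence of the functions $h^T_{v,t}$, you handle $\{W>0\}$ and $\{W=0\}$ separately using \eqref{W pos}), pass to the limit by bounded/dominated convergence via a uniform bound of the type $x^k e^{-\lambda x}\le (k/(e\lambda))^k$, and identify $\E[W^k e^{-uW}]=(-1)^k\varphi^{(k)}(u)$ exactly as in \eqref{W fub}. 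The only differences are cosmetic (your explicit calculus bound versus the paper's uniform boundedness of $h^T_{v,t}$, and differentiation under the expectation versus Fubini), so there is nothing to add.
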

\bp
Write
\begin{align*}
e^{ - k(m-1)T} F_{T-t}^{k}( e^{ - v e^{ - (m-1)T} }) &= e^{ - k(m-1)T} \E \left[ N_{T-t}^{(k)} e^{ - v e^{ - (m-1)T} N_{T-t}} \right] \\
&= e^{ - k(m-1)t} \E[ h_{v,t}^T( W_{T-t}) ],
\end{align*}
where $h_{v,t}^T(x) = \left( \prod_{ i = 0}^{k-1} (x - i e^{ - (m-1)(T-t)} ) \right) \exp \left( - v e^{ - (m-1)t} (x-ke^{-(m-1)(T-t)}) \right)$. Note that as $T \to \infty$, $h_{v,t}^T(x)$ converges uniformly on $[0,\infty)$ to 
\begin{align*}
h_{v,t}(x) := x^k e^{ - v e^{ - (m-1)t} x }.
\end{align*}
Since $W_{T-t} \to W$ almost surely as $T \to \infty$, it follows that $h_{v,t}^T(W_{T-t})$ converges almost surely to $h_{v,t}(W)$. Since $h_{v,T}^T(x)$ are bounded as $T$ varies, it follows by the bounded convergence theorem that
\begin{align*}
 \E[ h_{v,t}^T( W_{T-t}) ] \to \E[ h_{v,t}(W) ] = (-1)^k \varphi^k( v e^{ - (m-1)t} ) .
\end{align*}
\ep

We are now ready to prove our main result for supercritical trees. 

\bp[Proof of Theorem \ref{MTSuper}]

By Theorem \ref{MT FDD},
\begin{align} 
&\P( \pi^{k,L,T}_{t_1} = \gamma_1, \ldots, \pi^{k,L,T}_{t_n} = \gamma_n~ | ~ N_T \geq k)\\ &= \frac{1}{\P(N_T \geq k ) } \int_0^1  \frac{(1-s)^{k-1}  }{(k-1)! } \prod_{i = 0}^{ n} \prod_{\Gamma \in \gamma_i } F_{\Delta t_i }^{b_i(\Gamma) }\Big( F_{T- t_{i+1}}(s) \Big) ds.
\end{align}
For a fixed mesh $(t_i)_{i \leq n}$, the only interval $[t_i,t_{i+1}]$ that grows with $T$ is the final one $[t_n,T]$, and hence it is convenient to write
\begin{align} 
 = \frac{1}{\P(N_T \geq k ) } \int_0^1  \frac{(1-s)^{k-1}  }{(k-1)! } \prod_{i = 0}^{ n-1 } \prod_{\Gamma \in \gamma_i } F_{\Delta t_i }^{b_i(\Gamma) }\Big( F_{T- t_{i+1}}(s) \Big) \prod_{ \Gamma \in \gamma_n } F_{T-t_n}^{ | \Gamma | }(s)  ds,
\end{align}
where we note $b_n(\Gamma) = | \Gamma|$ by definition. Applying the change of variable $s = e^{ - v e^{ - (m-1)T} }$, we can write 
\begin{align} 
 \P( \pi^{k,L,T}_{t_1} = \gamma_1, \ldots, \pi^{k,L,T}_{t_n} = \gamma_n ~ | ~ N_T \geq k) =\int_0^\infty G^T(v) dv,
\end{align}
where
\begin{align*}
G^T(v) := e^{ - (m-1)T} e^{ - e^{ - (m-1)T} v } \frac{ (1 - e^ { - e^{- (m-1)T} v})^{k-1} }{ (k-1)! \P(N_T \geq k ) }\prod_{i = 0}^{ n-1 } \prod_{\Gamma \in \gamma_i } F_{\Delta t_i }^{b_i(\Gamma) }\Big( F_{T- t_{i+1}}(e ^{ - e^{ - (m-1)T} v } ) \Big) \\
\times \prod_{ \Gamma \in \gamma_n } F_{T-t_n}^{ | \Gamma|} ( e^{ - e^{ - (m-1)T} v } ) .
\end{align*}
Note by \eqref{W pos} that $\P(N_T \geq k) \to 1 - \varphi(\infty)$, and by using Lemma \ref{tech lemma} it can then be seen that as $T \to \infty$, $G^T(v)$ converges pointwise to
\begin{align*}
G(v) := \frac{ e^{ - k( m-1)t_n} v^{k-1} }{ (k-1)! ( 1 - \varphi(\infty)) } \prod_{i = 0}^{ n-1 } \prod_{\Gamma \in \gamma_i } F_{\Delta t_i }^{b_i(\Gamma) }\Big( \varphi( e^{ - (m-1)t_i} v)  \Big) \prod_{ \Gamma \in \gamma_n } (-1)^{ | \Gamma|} \varphi^{| \Gamma|} ( e^{ - (m-1)t_n} v ).
\end{align*}
It remains to establish that $\int_0^\infty G^T(v) dv \to \int_0^\infty G(v) dv$, which we prove using the dominated convergence theorem, Lemma \ref{DOM}. To this end, let
\begin{align*}
H^T(v) := e^{ - (m-1)T}  e^{ - ve^{-(m-1)T} }  \frac{ (1 - e^ { - e^{- (m-1)T} v})^{k-1} }{ (k-1)! \P(N_T \geq k ) } F_T^k(  e^ { - e^{- (m-1)T} v} ) .
\end{align*}
Setting $s = e^{ - e^{ - (m-1)T} v }$ in \eqref{semigroup bound}, we see that $0 \leq G^T(v) \leq H^T(v)$. Furthermore, $H^T(v)$ converges pointwise to 
\begin{align} 
H(v) := \frac{ (-1)^k  v^{k-1}  \varphi^k(v)}{ (k-1)! ( 1 - \varphi(\infty))}.
\end{align}
Finally, we note that for each $T$, by changing variable $s = e^{ - e^{ - (m-1)T} v }$ and then setting $X = 1$ in Lemma \ref{Betalem}, it can be seen that $H^T(v)dv$ is a probability measure. We now show that $H(v)dv$ is also a probability measure. Using \eqref{W fub} in the first equality below and Fubini's theorem in the second, 
\begin{align*}
\int_0^\infty H(v) dv &= \frac{ 1 }{ (k-1)! ( 1 - \varphi(\infty) ) } \int_0^\infty  v^{k-1} \E[ W^k e^{ - vW} ] dv\\
&= \frac{1}{ (k-1)! (1 - \varphi(\infty) )} \E \left[p(W) \right],
\end{align*}
where by the definition of the gamma integral, $p(w) := \ind_{w > 0} \int_0^\infty (vw)^k e^{ - vw} \frac{dv}{v} = (k-1)! \ind_{w > 0}$. Using $\P(W > 0 ) = 1- \varphi(\infty)$, we obtain
\begin{align}
\int_0^\infty H(v) dv &= \frac{1}{ (k-1)! (1 - \varphi(\infty) )} \E \left[ (k-1)! \ind_{W > 0}  \right] = 1. \label{H eq}
\end{align}
So $H^T(v) \geq G^T(v) \geq 0$, $H(v) \geq G(v)$, $H^T(v)$ converge pointwise to $H(v)$ and we trivially have $\int_0^\infty H^T(v) dv = 1 \to \int_0^1 H(v) dv = 1$. It follows by Lemma \ref{DOM} that $\int_0^\infty G^T(v) dv \to \int_0^\infty G(v) dv$. 
\ep

\subsection{Critical asymptotics}
Recall from Section 2 the Kolmogorov-Yaglom exponential limit law \cite[III.7]{AN72}, which states that when $f'(1) = 1$ 
\begin{align} \label{kolm2}
\lim_{T \to \infty} T \P( N_T > 0 ) = \frac{1}{c}, ~~~ \lim_{T \to \infty} \P \left( \frac{N_T}{cT} >  x \Bigg| N_T > 0 \right) = e^{ -x},
\end{align}
where $c := f''(1)/2$.\\

The following lemma gives us the scaled asymptotics of $F_t(s)$ when $f'(1) = 1$ and $c = f''(1)/2 < \infty$. 

\begin{lem} \label{critical scaling}
For any $\theta \in [0,\infty)$, and $a > 0$, $b \geq 0$, $j \geq 1$,
\begin{align*}
\lim_{T \to \infty} (cT)^{-(j-1)} F_{aT}^j\left( F_{bT}( \exp( - \theta/cT) ) \right) = a^{j-1} j! \left( \frac{ 1 + \theta b }{ 1 + \theta (a+b) } \right)^{j+1} .
\end{align*}
\end{lem}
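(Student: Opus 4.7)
The approach is a two-step reduction: first handle the inner composition with $F_{bT}$, converting the scaling $e^{-\theta/(cT)}$ into another scaling of the same form, and then invoke the already-established critical scaling identity \eqref{sugar} for the outer derivatives. Note that when $b = 0$ the outer function $F_{bT}$ is the identity and the claim is precisely \eqref{sugar}, so I may assume $b > 0$.

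\textbf{Step 1 (inner function).} I would first show that as $T \to \infty$,
\begin{align*}
F_{bT}\bigl(e^{-\theta/(cT)}\bigr) = 1 - \frac{1}{cT}\cdot \frac{\theta}{1+b\theta} + o(1/T),
\end{align*}
or equivalently $F_{bT}(e^{-\theta/(cT)}) = e^{-\theta'/(cT)} + o(1/T)$ with $\theta' := \theta/(1+b\theta)$. This follows from the Kolmogorov--Yaglom law \eqref{kolm} applied to the decomposition
\begin{align*}
F_{bT}\bigl(e^{-\theta/(cT)}\bigr) = 1 - \P(N_{bT} > 0)\Bigl(1 - \E\bigl[e^{-\theta N_{bT}/(cT)} \bigm| N_{bT}>0\bigr]\Bigr),
\end{align*}
using $\P(N_{bT}>0) \sim 1/(cbT)$ together with the bounded-convergence observation that $\E[e^{-\theta N_{bT}/(cT)} \mid N_{bT}>0] \to \E[e^{-b\theta E}] = 1/(1+b\theta)$ for $E \sim \mathrm{Exp}(1)$.

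\textbf{Step 2 (sandwich via monotonicity).} Since $F_{aT}^j(s) = \E[N_{aT}^{(j)} s^{N_{aT}-j}]$ for $j \geq 1$, the map $s\mapsto F_{aT}^j(s)$ is monotone increasing on $[0,1]$. For any $\delta > 0$, Step~1 yields, for $T$ sufficiently large, the sandwich
\begin{align*}
e^{-(\theta'+\delta)/(cT)} \le F_{bT}(e^{-\theta/(cT)}) \le e^{-(\theta'-\delta)/(cT)},
\end{align*}
and hence by monotonicity
\begin{align*}
F_{aT}^j\bigl(e^{-(\theta'+\delta)/(cT)}\bigr) \le F_{aT}^j\bigl(F_{bT}(e^{-\theta/(cT)})\bigr) \le F_{aT}^j\bigl(e^{-(\theta'-\delta)/(cT)}\bigr).
\end{align*}
Applying \eqref{sugar} to the two bounds and then letting $\delta \downarrow 0$ produces
\begin{align*}
\lim_{T\to\infty} T^{-(j-1)} F_{aT}^j\bigl(F_{bT}(e^{-\theta/(cT)})\bigr) = (ac)^{j-1}\, \frac{j!}{(1+a\theta')^{j+1}}.
\end{align*}
The identity $1 + a\theta' = (1+(a+b)\theta)/(1+b\theta)$ then gives the claimed formula after dividing by $c^{j-1}$.

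The main obstacle, such as it is, lies in the quantitative form of Kolmogorov--Yaglom required in Step~1: it must yield $\P(N_{bT}>0) = \frac{1}{cbT}(1 + o(1))$ with enough control that the $o(1/T)$ error in the expansion of $F_{bT}(e^{-\theta/(cT)})$ fits strictly inside the $e^{-(\theta'\pm\delta)/(cT)}$ sandwich of Step~2. Since the scaling limit \eqref{sugar} is continuous in its parameter, any sharp asymptotic expansion of $F_{bT}(e^{-\theta/(cT)})$ to order $o(1/T)$ suffices, and this is exactly what K--Y supplies.
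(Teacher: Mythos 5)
There is a genuine gap: your argument is circular. The identity \eqref{sugar} that you invoke in Step 2 is not an established result anywhere in the paper — the display in Section 2.2 is a forward reference ("In Lemma \ref{critical scaling}, we use the exponential limit law \eqref{kolm} to show that\ldots"), so \eqref{sugar} is precisely the $b=0$ case of the lemma you are being asked to prove. By declaring "when $b=0$ the claim is precisely \eqref{sugar}" and then using \eqref{sugar} again for the outer derivative in Step 2, you have reduced the lemma to its own special case without ever proving it. The analytic heart of the statement — that $(cT)^{-(j-1)}F^j_{aT}(s_T) \to (a)^{j-1} j!/(1+a\theta)^{j+1}$ for arguments $s_T = e^{-\theta/cT}$ — is exactly what is missing. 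In the paper this is done by writing $F^j_{aT}(s) = \E[N_{aT}^{(j)} s^{N_{aT}-j}]$, conditioning on $\{N_{aT}>0\}$, using the Yaglom law to get $N_{aT}/cT \to aZ$ in distribution (with $Z$ standard exponential) together with $\lim_T T\P(N_{aT}>0) = 1/(ca)$, and passing to the limit inside the expectation via boundedness of $x \mapsto x^j e^{-\lambda x}$; some such argument must appear in any complete proof. (A further small mismatch: \eqref{sugar} as displayed is stated only for $a \in (0,1]$, whereas the lemma needs all $a>0$.)

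That said, the parts you did write are sound and close in spirit to the paper's own treatment of the inner composition. Your Step 1 is exactly the paper's computation $\lim_T T\bigl(1 - F_{bT}(e^{-\theta/cT})\bigr) = \tfrac{1}{c}\tfrac{\theta}{1+\theta b}$, obtained from the two halves of Kolmogorov--Yaglom. Your Step 2 sandwich, exploiting monotonicity of $s \mapsto F^j_{aT}(s)$ and continuity of the limit in $\theta$, is a legitimate and rather clean way to decouple the inner and outer scalings; the paper instead handles general $b\ge 0$ directly, by noting $F_{bT}(e^{-\theta/cT})^{cT} \to \exp(-\theta/(1+\theta b))$ and then running the conditional-exponential-limit argument for $N_{aT}$ with this modified argument in place. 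If you supplement your proposal with an honest proof of the $b=0$ case along the lines just described, the combination (base case plus your sandwich reduction) gives a complete and arguably more modular proof; as it stands, the proposal only proves the reduction, not the lemma.
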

\bp Throughout this proof, $Z$ will refer to a standard exponential random variable, and we note
\begin{align*}
\E[ Z^j e^{ - \phi Z} ] = \frac{ j!}{ (1 + \phi)^{j+1} }.
\end{align*}
By \eqref{kolm2}, conditioned on $\{N_{bT} > 0\}$, the random variable $N_{bT}/cT$ converges in distribution to $bZ$ as $T \to \infty$. Now since $x \mapsto e^{ - \theta x}$ is bounded for $x \geq 0$, it follows that for any $b > 0$ we have
\begin{align*}
\lim_{T \to \infty} T \left( 1 - F_{bT}( \exp( - \theta/cT) ) \right) &= \lim_{T \to \infty} T( 1 -  \E[ \exp( - \theta N_{bT} /cT) ])\\
&=  \lim_{T \to \infty}  T \P(N_{bT} > 0 ) \Big(  1 - \E[ \exp( - \theta N_{bT} /cT) | N_{bT} > 0 ] \Big)\\
&= \frac{1}{ cb} ( 1 - \E[ e^{ - \theta b Z} ] ),
\end{align*}
where $Z$ is a standard exponential, and hence
\begin{align} \label{eq 11}
\lim_{T \to \infty} T ( 1 - F_{bT}( \exp( - \theta/cT) ))  = \frac{1}{c} \frac{ \theta}{ 1 + \theta b }.
\end{align}
Note that \eqref{eq 11} is also true for $b = 0$, since $F_0(s) = s$. It follows that for any $b \geq 0$,
\begin{align*}
\lim_{T \to \infty} F_{bT}(  \exp( - \theta/cT) )^{cT} = \exp \left( - \frac{ \theta}{ 1+ \theta b} \right) .
\end{align*}
Moreover, conditional on $\{ N_{aT} > 0 \}$,
\begin{align*}
\frac{ N_{aT}^{(j)} }{ (c a T)^{j}  } F_{bT}(  \exp( - \theta/cT) )^{N_{aT} - j} = \frac{ N_{aT}^{(j)} }{ (c a T)^{j}  }F_{bT}(  \exp( - \theta/cT) )^{ca T \times (N_{aT}/caT - j/caT)}
\end{align*}
converges in distribution to $Z^j \exp \left( - \frac{ \theta}{ 1+ \theta b} aZ \right)$. It follows that
\begin{align*}
& \lim_{T \to \infty} (cT)^{-(j-1)} F_{aT}^j\left( F_{bT}( \exp( - \theta/cT) ) \right)\\
&=  a^{j-1} \lim_{T \to \infty} (caT) \P(N_{aT} > 0 )  \lim_{T \to \infty}  \E \left[ \frac{ N_{aT}^{(j)}}{ (caT)^j }  F_{bT}(  \exp( - \theta/cT) )^{N_{aT} - j} \Big| N_{aT} > 0 \right]\\
&=  a^{j-1} \E \left[ Z^j \exp \left( - \frac{ \theta}{ 1+ \theta b} aZ \right)\right]\\
&= a^{j-1} j! \left( \frac{ 1 + \theta b }{ 1 + \theta (a+b) } \right)^{j+1}.
\end{align*}
as required.\\
\ep

We now prove our main result for critical trees.

\bp[Proof of Theorem \ref{MTCrit}]
Let $f'(1) = 1, c = f''(1)/2 < \infty$. Then by \eqref{EqFDD}, for $t_1 < \ldots < t_n \in [0,1]$,
\begin{align*}
\P( \pi^{k,L,T}_{Tt_1} = \gamma_1, \ldots, \pi^{k,L,T}_{Tt_n} = \gamma_n | N_T \geq k) = \frac{1}{ \P(N_T \geq k )} \int_0^1  \frac{(1-s)^{k-1}  }{(k-1)! } \prod_{i = 0}^{ n} \prod_{\Gamma \in \gamma_i } F_{ T \Delta t_i }^{b_i(\Gamma) } \left( F_{T(1 - t_{i+1})}(s) \right) ds,
\end{align*}
Taking the change of variable $s = e^{ - \theta/cT}$, we may write
\[ \P( \pi^{k,L,T}_{Tt_1} = \gamma_1, \ldots, \pi^{k,L,T}_{Tt_n} = \gamma_n | N_T \geq k) = \int_0^\infty G^T(\theta) d\theta,\]
where
\begin{align*}
G^T(\theta) := \frac{ e^{ - \theta/cT} }{ cT \P(N_T \geq k) } \frac{ (cT)^{k-1} (1 - e^{ - \theta/cT})^{k-1}  }{ (k-1)!}  \frac{1}{(cT)^{k-1} } \prod_{i=0}^n \prod_{ \Gamma \in \gamma_i} F_{ T ( \Delta t_i) }^{ b_i( \Gamma)} \left( F_{T(1 - t_{i+1})} ( e^{ - \theta/cT} ) \right) .
\end{align*}
Now note that given any chain of partitions $(\gamma_1 , \ldots , \gamma_{n})$, we have by the definition of the fragmentation numbers $b_i(\Gamma)$,
\begin{align*}
\sum_{ \Gamma \in \gamma_i } (b_i(\Gamma) - 1)= |\gamma_{i+1}| - |\gamma_i|,
\end{align*}
and in particular, $\sum_{ i = 0 }^n \sum_{ \Gamma \in \gamma_i } b_i( \Gamma) = k - 1$. 
Using these facts in conjunction with Lemma \ref{critical scaling} we have for every $\theta$,
\begin{align*}
G^T(\theta) \to G(\theta) := \prod_{i=0}^n \prod_{ \Gamma \in \gamma_i}  b_i(\Gamma)!  \frac{ \theta^{k-1}}{(k-1)!} \prod_{i=0}^n (\Delta t_i)^{|\gamma_{i+1}| - | \gamma_i| } \left( \frac{ 1 + (1-t_{i+1})\theta}{ 1 + (1 - t_i)\theta } \right)^{|\gamma_{i+1}|}.
\end{align*}
It remains to show that $\int_0^\infty G^T(\theta) d \theta \to \int_0^\infty G(\theta) d \theta$. To this end, define 
\begin{align*}
H^T(\theta) := \frac{ e^{ - \theta/cT} }{ cT \P(N_T \geq k) } \frac{ (cT)^{k-1} (1 - e^{ - \theta/cT})^{k-1}  }{ (k-1)!}  \frac{1}{(cT)^{k-1} } F_T^k( e^{ - \theta/cT} ) .
\end{align*}
Setting $s = e^{ - \theta/cT}$ in \eqref{semigroup bound} we see that $H^T(\theta) \geq G^T(\theta) \geq 0$. We also note that by Lemma \ref{critical scaling} that $H^T(\theta)$ converges pointwise to 
\begin{align*}
H(\theta) := \frac{ k \theta^{k-1} }{( 1+ \theta)^{k+1}} .
\end{align*}
Furthermore, by taking the change of variable $s = e^{ - \theta/cT}$ and using Lemma \ref{Betalem}, it can be seen each $H^T(\theta) d\theta$ is a probability measure on $[0,\infty)$. It is also straightforward to verify that $H(\theta) d\theta$ is also a probability measure on $[0,\infty)$.\\

In particular, we trivially have $\int_0^\infty H^T(\theta) d\theta \to \int_0^\infty H(\theta) d\theta$, and it follows from Lemma \ref{DOM} that $\int_0^\infty G^T(\theta) d\theta \to \int_0^\infty G(\theta) d \theta$. 

\ep

\subsection{Subcritical asymptotics}
Finally, we prove our main result in the subcritical case. This proof is more straightforward than the supercritical and critical cases since on survival until a large time $T$, there are only a constant order of particles alive at time $T$, and as a result, no scaling is needed in the generating functions.\\

Recall that when $m <1$, there exist non-negative numbers $\{c_j: j \geq 1 \}$ satisfying $\sum_{j \geq 1} c_j = 1$ such that
\begin{align} \label{quasi2}
\lim_{T \to \infty} \P( N_T = j | N_T > 0 ) = c_j. 
\end{align}
We set $C(s) := \sum_{j \geq 1} c_j s^j$, and note that according to \cite[I.11]{AN72} that $C'(1) < \infty \iff \E[L \log_+ L] < \infty$.

\begin{lem}
Suppose $f'(1) < 1$ and $\E[ L \log_+ L ] < \infty$. Then for all $l \geq 1$,

\begin{align} \label{sub prob}
\lim_{T \to \infty} \frac{ F_{T-t}^l (F_t(s)) }{ \P(N_T \geq k ) }  = \frac{ e^{ - (m-1)t}}{ 1 - \sum_{j=1}^{k-1} c_j} C^l(F_t(s)).
\end{align}
\end{lem}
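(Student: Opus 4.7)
The strategy is to decompose the ratio as
\[
\frac{F_{T-t}^l(F_t(s))}{\P(N_T \geq k)} = \frac{F_{T-t}^l(F_t(s))}{\P(N_{T-t} > 0)} \cdot \frac{\P(N_{T-t} > 0)}{\P(N_T > 0)} \cdot \frac{\P(N_T > 0)}{\P(N_T \geq k)},
\]
and identify the limit of each factor separately.

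For the first factor, I rewrite it as a conditional expectation:
\[
\frac{F_{T-t}^l(F_t(s))}{\P(N_{T-t} > 0)} = \E\!\left[ N_{T-t}^{(l)} F_t(s)^{N_{T-t}-l} ~\Big|~ N_{T-t} > 0 \right].
\]
For $s \in [0,1)$ the non-explosion hypothesis \eqref{Non-exp} ensures $F_t(s) < 1$ strictly, so the sequence $n \mapsto n^{(l)} F_t(s)^{n-l}$ is bounded on $\{l, l+1, \ldots\}$. Combining this with the quasi-stationary limit \eqref{quasi2} and bounded convergence yields
\[
\lim_{T \to \infty} \frac{F_{T-t}^l(F_t(s))}{\P(N_{T-t} > 0)} = \sum_{n \geq l} n^{(l)} F_t(s)^{n-l} c_n = C^l(F_t(s)).
\]

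The third factor is immediate: rewriting $\P(N_T > 0)/\P(N_T \geq k) = 1/\P(N_T \geq k \mid N_T > 0)$ and applying \eqref{quasi2} gives the limit $\bigl(1 - \sum_{j=1}^{k-1} c_j\bigr)^{-1}$. For the second factor, I appeal to the classical subcritical Yaglom asymptotic: under $m<1$ and $\E[L \log_+ L] < \infty$ there exists a constant $K \in (0, \infty)$ such that $\P(N_T > 0) \sim K e^{(m-1)T}$ as $T \to \infty$ (see \cite[Chapter I]{AN72}). It follows that
\[
\frac{\P(N_{T-t} > 0)}{\P(N_T > 0)} \to \frac{K e^{(m-1)(T-t)}}{K e^{(m-1)T}} = e^{-(m-1)t}.
\]
Multiplying the three limits gives \eqref{sub prob}. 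The only real obstacle is the second factor, which is precisely where the hypothesis $\E[L\log_+L]<\infty$ is needed to guarantee that $\P(N_T>0)$ decays at the pure exponential rate $e^{(m-1)T}$; the other two factors follow from bounded convergence and the quasi-stationary limit alone.
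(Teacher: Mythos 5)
Your proposal is correct and follows essentially the same route as the paper: the identical three-factor decomposition through $\P(N_{T-t}>0)$ and $\P(N_T>0)$, with the quasi-stationary limit \eqref{quasi2} handling the first and third factors and the exponential decay rate of the survival probability (where $\E[L\log_+L]<\infty$ enters, via $C'(1)<\infty$) handling the middle one. The only cosmetic difference is that the paper obtains the decay rate from $\P(N_T>0)=\E[N_T]/\E[N_T\mid N_T>0]$ with $\E[N_T]=e^{(m-1)T}$, while you cite the classical Yaglom-type asymptotic $\P(N_T>0)\sim Ke^{(m-1)T}$ directly — the same fact in different clothing.
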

\bp
First note that
 \begin{align*}
\lim_{T \to \infty} e^{ - (m-1)T} \P(N_T > 0 ) = \lim_{T \to \infty} e^{ - (m-1)T} \frac{ \E[N_T ]}{ \E[N_T | N_T > 0 ] } = \frac{1}{C'(1)}.
\end{align*}
Now write
\begin{align*}
\lim_{T \to \infty} \frac{ F_{T-t}^l (F_t(s)) }{ \P(N_T \geq k ) } =   \lim_{T \to \infty} \frac{ \P(N_T > 0 ) }{ \P(N_T \geq k ) }   \lim_{T \to \infty}  \frac{ \P(N_{T-t} > 0 ) }{ \P(N_T > 0 ) }  \lim_{T \to \infty} \frac{ F_{T-t}^l (F_t(s)) }{ \P(N_{T-t} > 0  ) } ,
\end{align*}
and use the definition \eqref{quasi2}.

\ep

\vspace{4mm}
We now prove our main result for subcritical trees.

\bp[Proof of Theorem \ref{MTSub}]
By replacing $t_i$ with $T - t_{n - i +1}$ in Theorem \ref{MT FDD}, we have the following formula for the finite dimensional distributions of $(\rho^{k,L,T}_t)_{t \in [0,T]}$:
\begin{align} 
\P( \rho^{k,L,T}_{t_1} = \gamma_1, \ldots, \rho^{k,L,T}_{t_n} = \gamma_n | N_T \geq k) 
&= \frac{1}{ \P(N_T \geq k ) } \int_0^1 \frac{(1-s)^{k-1}}{ (k-1)! } \prod_{ i = 1 }^{n+1} \prod_{ \Gamma \in \gamma_i } F_{ \Delta t_{-1}}^{ m_i(\Gamma) } ( F_{t_{i-1}}(s)) ds,\label{rho}
\end{align}
where $\gamma_1 \succ \ldots \succ \gamma_{n}$ is a chain with merger numbers $(m_i(\Gamma))$. \\

Note that $m_{n+1}( \{ 1, \ldots, k \} ) = |\gamma_n|$. Now for fixed $(t_i)_{i \leq n }$, as we send $T \to \infty$ the only time interval $[t_i,t_{i+1}]$ in \eqref{rho} that grows with $T$ is $[t_n,T]$. For this reason it is useful to write
\begin{align*}
&= \int_0^1 \frac{(1-s)^{k-1}}{ (k-1)! } \prod_{ i = 1 }^{n} \prod_{ \Gamma \in \gamma_i } F_{ \Delta t_{i-1}}^{ m_i(\Gamma) } ( F_{t_{i-1}}(s)) ~ \frac{ F_{T-t_n}^{|\gamma_n|} \left( F_{t_n} (s) \right) }{ \P(N_T \geq k )}  ds  =: \int_0^1 G^T(s) ds.
\end{align*}
By \eqref{sub prob}, for each $s \in [0,1]$,
\begin{align*}
G^T(s) \to G(s) := \frac{ e^{ - (m-1)t_n}}{ 1 - \sum_{j=1}^{k-1} c_j} \frac{(1-s)^{k-1}}{ (k-1)! } \prod_{ i = 1 }^{n} \prod_{ \Gamma \in \gamma_i } F_{ \Delta t_{i-1}}^{ m_i(\Gamma) } ( F_{t_{i-1}}(s))  C^{|\gamma_n|}(F_{t_n} (s)).
\end{align*}
It remains to establish that $\int_0^1 G^T(s) ds \to \int_0^1 G(s)ds$. To this end, define
\begin{align*}
H^T(s) := \frac{ (1-s)^{k-1} F_T^k(s) }{ (k-1)! \P(N_T \geq k )} 
\end{align*}
By replacing $t_i$ with $T - t_{n +1 - i}$ and $\gamma_i$ with $\gamma_{n+1 - i}$ (and noting $m_i(\Gamma) = b_{n-i}( \Gamma)$)  in \eqref{semigroup bound}, we obtain
\begin{align*}
H^T(s) \geq G^T(s) \geq 0.
\end{align*}
Furthermore, by applying \eqref{sub prob} with $t=0$ and $l=k$, we see that $H^T(s)$ converges pointwise to 
\begin{align*}
H(s) := \frac{ (1-s)^{k-1}}{ (k-1)! (1 - \sum_{j = 1}^{k-1} c_j) } C^k(s) .
\end{align*}
Finally, by Theorem \ref{mixture markov}, $H^T(s)ds = m^{k,L,T}(ds)$ is a probability measure on $[0,1]$, and furthermore, by Lemma \ref{Betalem}, so is $H(s)ds$. We trivially have $\int_0^1 H^T(s) ds  \to \int_0^1 H(s) ds$, and hence by Lemma \ref{DOM}, $\int_0^1 G^T(s) ds \to \int_0^1 G(s)ds$.

\ep

\section*{Acknowledgements}
The author would like to thank his PhD supervisors Matt Roberts and Simon Harris for their continual insight and support. The author would also like to thank an anonymous referee whose suggestions have improved the paper. This research was supported in the early stages by University of Bath URS funding, and in the latter stages by the ERC grant \emph{Integrable Random Structures} at University College Dublin.

\end{document}